\newcommand{\diver}{\mathop{\rm div}\nolimits}
\theoremstyle{plain}
\newtheorem{lemma}{Lemma}
\newtheorem{theorem}{Theorem}
\newtheorem{theolettre}{Theorem}
\newtheorem{proposition}{Proposition}
\newtheorem{definition}{Definition}
\newtheorem{corollary}{Corollary}
\theoremstyle{remark}
\newtheorem{formula}{Formula}
\tikzset{
	master/.style={
		baseline={-120 pt},
		execute at end picture={
			\coordinate (lower right) at (current bounding box.south east);
			\coordinate (upper left) at (current bounding box.north west);
		}
	},
	slave/.style={
		execute at end picture={
			\pgfresetboundingbox
			\path (upper left) rectangle (lower right);
		}
	}
}
\title{Mixing for the primitive equations under bounded non-degenerate noise}
\author{Pierre-Marie Boulvard}
\begin{document}

\maketitle
\begin{abstract}
	We study the stochastic 3D primitive equations of the atmospheric mechanics. We consider them under a bounded and non-degenerate noise, which is statistically periodic in time with period $1$. In such a case we prove that the associated integer-time Markov chain is exponentially mixing, which means that there exists a unique stationary measure to which the laws of all trajectories of this Markov chain converge exponentially fast.
	
\end{abstract}
\tableofcontents
\newpage

\section{Introduction}
In the space $\mathbb{R}^3$, 
 provided with a system of coordinates $\zeta:=(x,y,z)$, we consider the system of primitive equations of atmospheric mechanics (see Section 2.3 in \cite{BOK10}). This set of equations is of crucial importance for meteorology as it outlays the evolution of the cinematic fields of the atmosphere. Normally it describes the two-dimensional horizontal velocity field coupled with a temperature one. Here though, the latter will be omitted for simplicity. 
 Our results remain true for the complete system, with a very similar but more cumbersome proof.
 
Thus cinematic field $u=(u_1,u_2,u_3)$ abides by the following system:

\begin{align}
    \label{eqinit}
    \frac{\partial u_k}{\partial t}-\Delta u_k+\sum_{j=1}^3 \partial_j(u_ju_k)+\partial_k p & =f_k, & k=1,2,
    \\
    \label{condiv}
    \diver u=\sum_{j=1}^3 \partial_j u & =0,
\end{align}
where the pressure $p$ does not depend on $z$, $p=p(x,y,t)$.
 We supplement the equations with:
\begin{align*}
	& \text{for } j=1 \text{ or } 2,
	\\
	& u_j(x+L,y,z,t)=u_j(x,y,z,t),\quad u_j(x,y+L,z,t)=u_j(x,y,z,t), 
	\\
	& u_j(x,y,z+h,t)=u_j(x,y,z,t),\quad u_j(x,y,-z,t)=u_j(x,y,z,t). 
\end{align*}
Thus $u_1$ and $u_2$ are periodic in $x$, $y$ and $z$ and even in $z$. Moreover,
\begin{align*}
	& u_3(x+L,y,z,t)=u_3(x,y,z,t),\quad u_3(x,y+L,z,t)=u_3(x,y,z,t), 
	\\
	& u_3(x,y,z+h,t)=u_3(x,y,z,t),\quad u_3(x,y,-z,t)=-u_3(x,y,z,t), 
\end{align*}
i.e $u_3$ is periodic in $x$, $y$ and $z$ and odd in $z$.

The goal of this work is to study some stochastic aspects of our system:
given the observation-induced uncertainties in the empirical parameters of the problem described by the primitive equations, 
it is natural to consider its probabilistic side and study the primitive equations with a stochastic force $f$, and maybe a random initial data $v_0$. 
Due to the complex nature of the primitive equation however, not every random force $f$ shall fit our purposes: in particular, the classical white-in-time force is not suitable for our considerations, for lack of good energy estimates, something which we will come back to later, when laying out the existing results on mixing equations.

Therefore, we study here the primitive equations stirred by bounded random forces, statistically periodic in time given by non-degenerate random Haar series which we call red forces or red noise. This will be defined later on in section $\ref{rednoisesec}$.

\subsection{The reduced equation}
We first simplify $\eqref{eqinit}$ noting that $u_3$ can be deduced from the other two components of the velocity field using $\eqref{condiv}$.
Indeed, since $u_3$ is odd, periodic, then
\begin{equation*}
u_3(x,y,kh)=0,\quad k\in\mathbb{Z}.
\end{equation*}
Thus, we may write that
\begin{equation*}
u_3(\zeta, t)
=-\int_{-h}^z \diver_2(u_1,u_2)(x,y,\alpha,t)d\alpha,
\end{equation*}
where $\diver_2(u):=\partial_x u_1+\partial_y u_2$. 
So, denoting  $v:=(u_1,u_2)$ we get the equation for $v$


\begin{equation}
    \label{init}
    \frac{d v}{d t}-\Delta v+(v\cdot\nabla_2)v-\left(\int_{-h}^z \diver_2 v(\cdot,\cdot,\xi,\cdot)d\xi\right)\frac{\partial v}{\partial z}+\nabla_2 p= f, 
\end{equation}
%
%
%
with the following boundary conditions:
\begin{align}
	\label{cb1}
	& v(x+L,y,z,t)=v(x,y+L,z,t)=v(x,y,z+h,t)=v(x,y,z,t), 
	\\
	\label{cb2}
	 & v(x,y,-z,t)=v(x,y,z,t). 
\end{align}
Note that since $v(x,y,z,t)$ satisfies $\eqref{cb1}$, $\eqref{cb2}$, we may consider it as an even in $z$, non-autonomous vector field on the torus, 
\begin{equation*}
	v:\mathbb{O}\times\mathbb{R}^+\rightarrow \mathbb{R}^2,\quad \mathbb{O}=(\mathbb{R}/L\mathbb{Z})^2\times(\mathbb{R}/h\mathbb{Z}).
\end{equation*}
 Denote $\mathbb{T}_L^2=(\mathbb{R}/L\mathbb{Z})^2$ and $\mathbb{T}_h=\mathbb{R}/h\mathbb{Z}$ then $\mathbb{O}=\mathbb{T}_L^2\times \mathbb{T}_h$.
In addition, $v$ meets an incompressibility condition derived from $\eqref{condiv}$. Indeed by integrating the latter in $z$ and adding the boundary conditions, setting 
\begin{equation*}
	\bar{v}:=\int_{-h}^0 v(\cdot,\cdot,\xi,\cdot)d\xi,
\end{equation*}
one gets
\begin{equation}
	\label{0D}
	\diver_2\bar{v}=0. 
\end{equation}
We now add some initial conditions
\begin{equation}
    \label{ci}
    v(\cdot,0)=v_0,
\end{equation}
with $v_0$ displaying, in addition to $\eqref{cb1}$, $\eqref{cb2}$ and $\eqref{0D}$, the zero-meanvalue property:
\begin{equation}
\label{ci0M}
\int_{\mathbb{O}} v_0d\zeta=
0.
\end{equation}
As we furthermore require that $f$ follows 
\begin{equation}
\label{cf0M}
\left(\int_{\mathbb{O}}fd\zeta\right)(t) =
0,\quad \forall t\geq 0,
\end{equation}
we get that the solution $v$ of $\eqref{init}$, also satisfies 
\begin{equation}
\label{0M}
\left(\int_{\mathbb{O}} v d\zeta\right)(t)
=0,\quad\forall t\geq 0. 
\end{equation}

Let us now briefly discuss some major milestones in the study of this problem: the groundwork was laid in \cite{ART10} in a much more general and complicated physical setting, the domain being a manifold and the density of the fluid not being constant. In addition to defining the natural spaces for the solutions the authors introduce a variational setting of the problem and solve the problem, albeit in a weak sense.
To reach a better regularity for solutions the main obstacle to overcome is the non-linear term and in particular its $z$ component. 
A first way to circumvent this difficulty, introduced in \cite{ART20}, is to write a solution of the primitive equations as a deviation from a solution of the associated Stokes system. Then the boundedness of our solution in the space $H^1$ can be derived using the regularity of the solution of the Stokes system. This approach allows one to prove, among other things, a local existence of a solution of the primitive equations where the time of existence depends on the norm of the solution of the Stokes system.
Supplementing this idea with a small depth hypothesis the authors of \cite{ART11} are able to prove the existence of a global solution.
A major breakthrough was then achieved in \cite{ART1} where a careful study of the $L^6$ a priori estimates 
implies the  boundedness of the derivatives of a solution and thus proves the well-posedness of the problem. Further regularity results for the solutions were obtained first for the second order Sobolev norm in \cite{ART18} and then for higher Sobolev norms in \cite{ART9}.
The notation and analytic considerations of our article follow that of \cite{ART1} and \cite{ART9}; the setting of the latter is close to ours.

Now 
 let us set up the exact setting in which the equations will be considered and state the well-posedness of the corresponding deterministic problem.

First let us re-write $\eqref{init}$ more compactly: defining
\begin{equation}
\label{defb}
b(u,v)=(u\cdot\nabla_2) v-\left(\int_{-h}^z\diver_2u(\cdot,\cdot,\xi,\cdot)d\xi\right)\frac{\partial v}{\partial z},
\end{equation}
we may write the former as
\begin{equation}
\label{initial}
\frac{\partial v}{\partial t}-\Delta v +b(v,v)+\nabla_2 p=f.
\end{equation}

For any $j\in\mathbb{Z}$ we denote by $H^j(\mathbb{O})$ the Sobolev space of degree $j$ on $\mathbb{O}$ and
\begin{equation*}
	H^j:=\{v\in \left(H^j(\mathbb{O})\right)^2:\int_{\mathbb{O}}vd\zeta=0, v \text{ is even in }z\},
\end{equation*}
and provide it with the homogenous scalar product
\begin{equation*}
	\langle u,v\rangle_j=\langle (\nabla)^j u,(\nabla)^jv\rangle.
\end{equation*}
Next for $j\in \mathbb{N}\cup\{0\}$ we set
\begin{equation*}
	V^j:=\{w\in H^j: w \text{ satisfies }\eqref{0D}\}. 
\end{equation*}
We further note:
\begin{equation*}
	H^r\oplus\mathbb{R}\mathbbm{1}:=\{v(t)+c, v\in H^r, c\in\mathbb{R}\}.
\end{equation*}
Moreover, we abbreviate $H^0$ by $H$ and $V^0$ by $V$.

For $j\in\mathbb{N}\cup 0$, let us define:
\begin{itemize}
	\item $E_j([t_1,t_2]):=L^2([t_1,t_2], V^j)$ 
	\item $U_j([t_1,t_2]):=\{u\in E_{j+1}([t_1,t_2]),\frac{\partial u}{\partial t}\in E_{j-1}([t_1,t_2])\}$. Notice that $U_j([t_1,t_2])\hookrightarrow C([t_1,t_2],V^{j})$ , see Theorem 3.1 in the first chapter of \cite{BOK7}.
	%
	\item we abbreviate $E_j(T):=E_j([0,T])$ and $U_j(T):=U_j([0,T])$, and denote 
	$|||f|||_k:=\sup_{t} ||f(t)||_{V^k}$.

	\item lastly, for a Banach space $E$
	we denote by $B_E(a,R)$ the open ball in $E$ centered in $a$ with the radius $R$ 
	and $\bar{B}_E(a,R)$ the closed ball; we shall omit $a$ when it be zero.
	
\end{itemize}

This done we may state, grounding ourselves on the results of \cite{ART9}, the existence and uniqueness of solutions of the primitive equations, which we define in the following way:
\begin{definition}
	\label{solution}
	Let $T>0$, $m\geq 1$ and $v_0\in V^m$. Then $v$ is a strong solution of $\eqref{init}, \eqref{ci}$ if  $v\in U_m(T)$ and $\exists p\in L^2([0,T], H^m(\mathbb{T}_L^2))$ such that $(v,p)$ verifies $\eqref{init}$ and $\eqref{ci}$.
\end{definition}
Moreover, if $v(\zeta,t),t\geq 0$ is such that $v(\zeta,t)|_{0\leq t\leq T}$ is a solution of $\eqref{init}$, $\eqref{ci}$ for every $T>0$, then $v$ is called  a solution over $\mathbb{R}^+$. 

To establish the well-posedness of the problem $\eqref{init}$, $\eqref{ci}$ we refer to \cite{ART9}
(see also \cite{ART3} for a similar result). 
Namely, Theorems 2.1 and  3.1 from \cite{ART9} imply:  
\begin{theorem}
	\label{Petcu}
	Let $m\geq 1$, $v_0\in V^m$ and $f\in L^{\infty}(\mathbb{R}^+,V^{m-1})$. Then problem $\eqref{init}$, $\eqref{ci}$ has a unique solution $v\in C(\mathbb{R}^+,V^m)\cap L^2_{loc}(\mathbb{R}^+,V^{m+1})$. Moreover, for any $T>0$, the norm of $v$ in $C(\mathbb{R}^+,V^m)\cap L^2_{loc}(\mathbb{R}^+,V^{m+1})$ depends only on $T$, $||v_0||_V^m$ and $|||f|||_m$.
\end{theorem}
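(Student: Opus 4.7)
The plan is to reduce the statement to the combination of Theorems 2.1 and 3.1 of \cite{ART9}, with elementary bookkeeping on the interval $[0,T]$.

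First, for the base case $m=1$, I would invoke Theorem 2.1 of \cite{ART9}, which asserts that for $v_0 \in V^1$ and $f \in L^\infty(\mathbb{R}^+, H)$ there is a unique global strong solution $v \in C(\mathbb{R}^+, V^1) \cap L^2_{\mathrm{loc}}(\mathbb{R}^+, V^2)$. The core of that proof is the Cao--Titi $L^6$ a priori estimate from \cite{ART1}, obtained by testing the equation against $|v|^4 v$ and exploiting the incompressibility condition $\diver_2 \bar v = 0$; Gr\"onwall's lemma then produces a global bound on $\|v(t)\|_{L^6}$, which, combined with a standard energy estimate, yields the required $V^1$-bound depending only on $T$, $\|v_0\|_{V^1}$ and the $L^\infty_t H$-norm of $f$.

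For $m \geq 2$, I would proceed by induction on $m$, which is the content of Theorem 3.1 of \cite{ART9}. Assuming the $V^{m-1}$-bound has already been established, one differentiates the equation $m$ times and tests with $\nabla^m v$; the nonlinear term $b(v,v)$ produces commutators that split into (a) contributions of the form top-order factor times lower-order factor, absorbable into the diffusion via Young's inequality and the induction hypothesis, and (b) terms controlled by Sobolev and Ladyzhenskaya-type inequalities. Gr\"onwall's lemma then supplies the $C([0,T], V^m)$-bound, and integrating the energy identity yields the accompanying $L^2([0,T], V^{m+1})$-bound; both depend only on the stated data norms.

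The main obstacle, already solved in the references, is the non-local anisotropic term $\left(\int_{-h}^z \diver_2 v\, d\xi\right)\partial_z v$ appearing in $b(v,v)$: it is not controlled by a Leray-type $H^1$ energy argument, and its taming at the $H^1$ level requires the non-standard $L^6$ test function of Cao--Titi. Once this structural estimate is in hand, the higher-regularity induction is routine, and the $T$-dependence of the bound enters only through the Gr\"onwall constants.
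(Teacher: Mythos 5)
Your proposal matches the paper exactly: the paper gives no proof of Theorem \ref{Petcu} beyond citing Theorems 2.1 and 3.1 of \cite{ART9}, which is precisely the reduction you make (your added sketch of the Cao--Titi $L^6$ estimate and the higher-order induction correctly summarizes what those cited results contain). Nothing further is needed.
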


The deterministic problem thus settled we may now turn to the stochastic one, pondering in particular whether it is $\textit{mixing}$.
The primitive equations $\eqref{init}$, $\eqref{ci}$ would be said to be $\textit{mixing}$ if for any starting point $v_0\in V^m$, a solution $v(t)$ of $\eqref{init}$, $\eqref{ci}$ weakly converged to a certain measure $\mu$ in the space $V^m$. That is, if for every bounded and continuous functional $g$ on $V^m$ we had
\begin{equation}
\label{melsto}
\mathbb{E}[f(v(t)]\rightarrow \int_\Omega f(u)\mu(du),\quad \text{as } t\rightarrow\infty.
\end{equation}
%
%

For the related 2D Navier-Stokes system, the mixing is established for various classes of random forces, bounded and unbounded.
First off, in \cite{ART33} the mixing property was established for the 2D Navier-Stokes system under a bounded noise defined as a kick-force. 
Thereafter, the 2D Navier-Stokes was shown in \cite{ART29} to be mixing when subjected to a white-in-time noise under the condition that said noise be "rough" enough. 
Under the more relaxed condition that this noise be expressed only on a finite, but large number, of Fourier modes as a space variable, the uniqueness of a stationnary measure for the 2D Navier-Stokes problem was shown in \cite{ART26}, and then the mixing of this system in \cite{ART32, ART34}.
However, when the white-noise, to which is subjected the 2D Navier-Stokes system, is expressed on a small number of modes, only the existence and uniqueness of a stationnary measure, have been proved, namely in \cite{ART35bis}.
All the former results about the existence and uniqueness of a stationnary measure for the 2D Navier-Stokes equations under a white noise have been summed up in \cite{ART43}.

For the 3D Navier-Stokes system, on the toroidal layer $\mathbb{T}_L^2\times (0,\epsilon)$ (with suitable boundary conditions), the mixing was proved in \cite{ART17} for bounded kick-forces, when $\epsilon$ is small in terms of the force. 

Now, as regards the primitive equations $\eqref{init}$, $\eqref{ci}$, they were studied in \cite{ART3} in the case when $f$ is a bounded kick-force, and a sketch of the proof of mixing was given. Pertaining to the primitive equations system perturbated by a white-in-time force, the well-posedness of the problem was proved in \cite{ART13, ART14} (for two different forms of white noise) and the existence of a stationnary measure in \cite{ART15}.
However, for equations $\eqref{init}$, $\eqref{ci}$ with white in time random force $f$, it still is not proven that the second moment of high Sobolev norms of solutions remains bounded, in the sense that
\begin{equation*}
\mathbb{E}[||v(t)||_m^2]<\infty,\quad \forall t>0, \quad\text{ for some } m\geq 2.
\end{equation*}
Without such a result it seems impossible to prove the uniqueness of a stationary measure and mixing.

As we explained earlier, in order to avoid this difficulty we study the primitive equations stirred by red forces (or red noises), that is bounded random forces, statistically periodic in time given by non-degenerate random Haar series. The exact form of those forces will be given in section $\ref{rednoisesec}$.
Our case of coloured noises is rather harder than the one of the kick-forces, treated in \cite{ART3}, the latter being also less adequate for applications.  

Thanks to the form of the noise and by Theorem $\ref{Petcu}$ the problem $\eqref{init}$ , $\eqref{ci}$ is well posed.
 
The main result of this work is the following proposition: 
\begin{theolettre}
	The stochastic process generated in $V^m$ by $\eqref{init}$ stirred by red-noise $f$ as above, evaluated at integer moments of time define a Markov chain which is exponentially mixing in spaces $V^m$, for $m\geq 2$.
\end{theolettre}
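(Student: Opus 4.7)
The plan is to follow the Kuksin--Shirikyan coupling framework for random dynamical systems driven by bounded non-degenerate noise. Denote by $\Phi$ the time-$1$ solution map provided by Theorem \ref{Petcu}, so that the integer-time Markov chain on $V^m$ is $v_{n+1} = \Phi(v_n, \eta_{n+1})$, where $\{\eta_n\}$ is an i.i.d.\ sequence of $1$-periodic pieces of the red force. Since $\eta_n$ is bounded in $V^{m-1}$, the continuity statement in Theorem \ref{Petcu} gives the Feller property immediately, so only two ingredients remain: \emph{dissipativity} (a Lyapunov-type recurrence) and a \emph{squeezing / one-step contraction} property, after which the abstract mixing theorem of Kuksin--Shirikyan delivers the result.

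For dissipativity, I would run the $V^m$-energy estimate on $[0,1]$ using the Poincaré inequality on $\mathbb{O}$ and the uniform bound $\||\eta_n\||_{m-1} \le C$. The parabolic dissipation coming from $-\Delta$ dominates, and one obtains a radius $R$ such that $\bar{B}_{V^m}(R)$ is absorbed in finitely many steps and invariant thereafter with overwhelming probability. This reduces the problem to showing mixing starting from $\bar{B}_{V^m}(R)$.

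The heart of the argument is the squeezing: I would prove that there exist $q<1$ and $p_*>0$ such that for any $v_0, v_0'\in \bar{B}_{V^m}(R)$ one can build a coupling of the laws of $v_1$ and $v_1'$ with
\begin{equation*}
\mathbb{P}\bigl(\|v_1 - v_1'\|_{V^m}\leq q\|v_0-v_0'\|_{V^m}\bigr)\geq p_*.
\end{equation*}
Two ingredients feed this. First, a Foias--Prodi-type estimate: writing $w=v-v'$, the equation for $w$ is linear in $w$ with coefficients controlled by $v,v'$; one shows that if the low-frequency projection $P_Nw$ is dissipated, the high modes $(I-P_N)w$ are contracted by the viscous term, provided $N$ is chosen large depending on $R$. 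Second, the non-degeneracy of the Haar-series coefficients of the red force means the noise activates every low mode on every interval $[n,n+1]$; using a change-of-measure argument in the spirit of Kuksin--Shirikyan one constructs, with positive probability, a realization $\eta_{n+1}'$ close to $\eta_{n+1}$ which drives the finite-dimensional component $P_Nw$ towards zero.

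The main obstacle will be implementing the Foias--Prodi estimate in the $V^m$ topology for $m\geq 2$, because the non-local term $\bigl(\int_{-h}^z \diver_2 v\, d\xi\bigr)\partial_z v$ in $b(v,v)$ lacks the antisymmetry of the Navier--Stokes trilinear form, so the naive cancellations fail and one must exploit the higher regularity afforded by $m\geq 2$ together with the $L^6$ estimates of \cite{ART1} to absorb the bad commutator terms against the dissipation. Once the squeezing and dissipativity are established, exponential mixing in the dual-Lipschitz metric on $V^m$ follows from the abstract coupling/contraction theorem for Markov chains under bounded non-degenerate perturbations, and uniqueness of the stationary measure $\mu$ is automatic.
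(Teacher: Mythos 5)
Your overall strategy (dissipativity plus a probabilistic squeezing obtained by steering the low modes of the difference of two trajectories) is the classical coupling route, and it is genuinely different from the one taken here. The paper proves no Foias--Prodi estimate at all. Instead it verifies the hypotheses of an abstract criterion from \cite{ART7} (Theorems \ref{KuksinZhang} and \ref{melnouv}): analyticity and a regularization property of the time-one map $S:V^m\times E_m(1)\to V^{m+1}$ (Theorem \ref{AnalSol}, Proposition \ref{Reg}), dissipativity in the form $\|S(u_0,0)\|'_{V^m}\le\gamma\|u_0\|'_{V^m}$ obtained by combining the elementary $L^2$-contraction with the smoothing estimate $\|S(u_0,0)\|_{V^m}\le \mathcal{C}_m\|u_0\|_{L^2}$ of Lemma \ref{augmnorm}, and --- this is the crux --- the density in $V^m$ of the range of $D_\eta S(u_0,\eta):E_m(1)\to V^m$ (Proposition \ref{nondegen}), proven by showing that the adjoint operator, built from the backward dual linearized equation, has trivial kernel. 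The coupling is then performed once and for all inside the abstract theorem. What this buys is that all the delicate interaction between the coloured noise and the nonlinearity is concentrated in a single linear approximate-controllability statement, for which a duality and compactness argument is available.

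The two places where your proposal has genuine gaps are exactly the two places where this concentration happens. First, the Foias--Prodi estimate: you correctly identify that the non-local term $\bigl(\int_{-h}^z\diver_2 v\,d\xi\bigr)\partial_z v$ loses a derivative, but ``exploit the higher regularity together with the $L^6$ estimates'' is not yet an argument; the cross term $\langle B(w,v'),(I-P_N)w\rangle$ forces $\|w\|_{V^1}$ onto the right-hand side, and whether the resulting inequality can be closed for the high-mode part alone, and then lifted to $V^m$, is precisely what would have to be proved --- the paper sidesteps this entirely by replacing Foias--Prodi with the smoothing estimate of Lemma \ref{augmnorm}. Second, and more seriously, the step ``one constructs, with positive probability, a realization $\eta'$ close to $\eta$ which drives $P_Nw$ towards zero'' does not follow from the mere non-degeneracy of the Haar coefficients. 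For kick forces the noise enters additively at $t=n$ and this step is a finite-dimensional maximal coupling; for a red force acting throughout $[n,n+1]$ the map $\eta\mapsto P_N v(n+1)$ is a nonlinear functional of the whole noise path, and one must prove that its linearization has dense range --- an approximate-controllability statement for the linearized primitive equations. That is the content of Proposition \ref{nondegen}, whose proof occupies a substantial part of the paper and requires constructing and estimating the dual flow $\bar{S}_{t_2}^{t_1}$. As written, your proposal assumes this controllability rather than proving it, so the heart of the argument is missing.
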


The first part of this work is devoted to stating some deterministic results about our system. In the second part we study its stochastic side, introducing the red noise before stating a theorem from \cite{ART7} which enables us to prove that our system is exponentially mixing.
\section*{Acknowledgements}
First and foremost, I would like to thank Serguei Kuksin, for his dedication in overseeing this work. Furthermore my gratitude goes to Vahagn Nersesyan for his kindness and his very insightful remarks. Lastly, I thank Mrs Madalina Petcu for making herself available to answer my questions and provide advice.

Moreover, I am grateful to Université Paris Diderot, for its financial support in the form of a PhD grant, without which, I would never have written this work.

\section{Preliminary results}
Now let us introduce some useful results for the calculations, lying ahead.
\begin{formula}
\label{ortho}
For any $v\in V^1$ and $p\in H^1(\mathbb{T}_L^2)$
    \begin{multline*}
    	\int_{\mathbb{O}}v\cdot \nabla_2 p=\int_{\mathbb{O}}v(x,y,z)\cdot\nabla_2 p(x,y)dxdydz=\int_{\mathbb{T}_L^2}\Bar{v}(x,y)\cdot\nabla_2p(x,y)dxdy
    	\\
    	=\int_{\mathbb{T}_L^2}\diver_2(\Bar{v})(x,y)p(x,y)dxdy=0.
    \end{multline*}
\end{formula}
Moreover,
\begin{lemma}
	\label{projL2}
	We have the following orthogonal decomposition:
	\begin{equation*}
	H= V\oplus \nabla_2H^1(\mathbb{T}_L^2),
	\end{equation*}
	where 
	$\nabla_2H^1(\mathbb{T}_L^2):=\{\nabla_2 p, p \in H^1(\mathbb{T}_L^2)\}$.
\end{lemma}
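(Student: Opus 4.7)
The plan has two steps: establish orthogonality of $V$ and $\nabla_2 H^1(\mathbb{T}_L^2)$ inside $H$, and then produce an explicit decomposition of an arbitrary $v \in H$. The argument mirrors the standard Helmholtz--Leray decomposition, with the usual divergence-free condition replaced here by $\diver_2 \bar{w} = 0$ on the vertical average.

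For orthogonality, I would use Fubini: for $w \in H$ and $p \in H^1(\mathbb{T}_L^2)$,
$$
\langle w, \nabla_2 p\rangle_H = \int_{\mathbb{O}} w \cdot \nabla_2 p \, d\zeta = \int_{\mathbb{T}_L^2} \bar{w}(x,y) \cdot \nabla_2 p(x,y) \, dx\, dy.
$$
Since $\bar{w} \in L^2(\mathbb{T}_L^2)^2$, the condition $\diver_2 \bar{w} = 0$ from $\eqref{0D}$ is naturally interpreted distributionally on $\mathbb{T}_L^2$; pairing this null distribution with any test function $p \in H^1$ (via density of $C^{\infty}$ in $H^1$) makes the right-hand side vanish. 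When $w \in V^1$ this reduces exactly to Formula~$\ref{ortho}$, so the general case is simply its $L^2$ closure.

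For the decomposition, given $v \in H$, I would set $\bar{v}(x,y) := \int_{-h}^0 v(x,y,z)\, dz \in L^2(\mathbb{T}_L^2)^2$. Since $v$ has zero mean over $\mathbb{O}$, $\bar{v}$ has zero mean over $\mathbb{T}_L^2$. Hence $\diver_2 \bar{v} \in H^{-1}(\mathbb{T}_L^2)$ has zero mean, and classical elliptic theory on the $2$-torus yields a unique zero-mean $p \in H^1(\mathbb{T}_L^2)$ solving $h \, \diver_2(\nabla_2 p) = \diver_2 \bar{v}$. Extending $\nabla_2 p$ to $\mathbb{O}$ as a $z$-independent vector field, I would set $w := v - \nabla_2 p$. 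Then $w$ remains even in $z$ and its mean over $\mathbb{O}$ vanishes (since the integral of a gradient over $\mathbb{T}_L^2$ is zero); since $\overline{\nabla_2 p} = h\, \nabla_2 p$, one obtains $\diver_2 \bar{w} = \diver_2 \bar{v} - h\, \diver_2(\nabla_2 p) = 0$, so $w \in V$.

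The only technical points are the distributional reading of $\diver_2 \bar{v}$ for merely $L^2$ fields and the $H^{-1} \to H^1$ solvability of the Poisson equation on the $2$-torus. Both are entirely standard, and no serious obstacle beyond careful bookkeeping of the vertical-averaging procedure arises.
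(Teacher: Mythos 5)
Your proof is correct, but it follows a genuinely different route from the paper. The paper works entirely in Fourier space: it exhibits the explicit orthogonal basis $\Lambda_{m,n}^{k\pm}$ of $H$, observes via Formula~\ref{ortho} that $V\perp\nabla_2H^1(\mathbb{T}_L^2)$, and then checks that any $v\perp\nabla_2H^1(\mathbb{T}_L^2)$ has no components along the $\Upsilon_{m,n}=\Lambda_{m,n}^{0+}$ modes, hence lies in the span of the remaining basis vectors, all of which satisfy $\eqref{0D}$. You instead run the classical Helmholtz--Leray scheme on the vertically averaged field: orthogonality by Fubini plus the distributional reading of $\diver_2\bar w=0$, and existence of the decomposition by solving $h\,\Delta_2 p=\diver_2\bar v$ in zero-mean $H^1(\mathbb{T}_L^2)$ and setting $w=v-\nabla_2p$; the computation $\overline{\nabla_2 p}=h\,\nabla_2 p$ and the mean/parity checks are all right, and uniqueness (hence the direct sum) follows from the orthogonality you established. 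Your argument is more portable --- it does not use the torus beyond solvability of the Poisson problem --- and it is constructive in the sense of producing $p$ directly. The paper's choice is not gratuitous, though: the basis $\Lambda_{m,n}^{k\pm}$ built in its proof is reused immediately afterwards to write the projection $\mathfrak{P}$ explicitly in $\eqref{projection}$, to prove Lemma~\ref{projVr} (orthogonality of $\mathfrak{P}$ on every $H^r$), and to obtain the Poincar\'e inequality from the eigenvalues of $-\Delta$, so the Fourier route amortizes its cost over the rest of Section~2. One small point you leave implicit is that $V$ is closed in $H$ (it is, being the kernel of the continuous map $w\mapsto\diver_2\bar w$ from $L^2$ to $H^{-1}$), which is what makes the orthogonal direct sum legitimate; this is equally implicit in the paper.
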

\begin{proof}
	Clearly $\nabla_2 H^1(\mathbb{T}_L^2)$ belongs to $H$.
	Now let us consider the complex base of $H^1(\mathbb{T}_L^2)$ made of vectors $e^{2i\pi/L(mx+ny)}$, for $ m$ and $n$ in $\mathbb{Z}$.
	Then $\nabla_2 H^1(\mathbb{T}_L^2)$ is spanned by
	\begin{equation*}
		\Upsilon_{m,n}:= 
		\begin{pmatrix}
			m\\
			n
		\end{pmatrix}
		e^{(2i\pi/L)(mx+ny)}, \quad \forall m,n\in\mathbb{Z},\quad |m|+|n|\neq 0.
	\end{equation*}
	In view of Formula $\ref{ortho}$, the spaces $V$ and $\nabla_2H^1(\mathbb{T}_L^2)$ are orthogonal. So it remains to show that each vector, orthogonal to $\nabla_2H^1(\mathbb{T}_L^2)$ belongs to $V$. 
	The space $H$ admits the orthogonal complex basis $\{\Lambda_{m,n}^{k\pm}:m,n\in\mathbb{Z},k\in\mathbb{N}\cup\{0\}$, 
	where 
	\begin{align*}
		\Lambda_{m,n}^{k +}&=
		\begin{pmatrix}
			m
			\\
			n
		\end{pmatrix}
		e^{(2i\pi/L)(mx+ny)}\cos(kz),
		\\
		\Lambda_{m,n}^{k -}&=
		\begin{pmatrix}
		-n
		\\
		m
		\end{pmatrix}
		e^{(2i\pi/L)(mx+ny)}\cos(kz).
	\end{align*}
	Decomposing any $v\in H$, such that $v\perp \nabla_2 H^1(\mathbb{T}_L^2)$ in this basis as
	\begin{equation*}
		v=\sum_{m,n,k}\left(v_{m,n}^{k+}\Lambda_{m,n}^{k+}+v_{m,n}^{k-}\Lambda_{m,n}^{k-}\right),
	\end{equation*}
	we see that
	\begin{equation*}
		0=\langle v,\Upsilon_{m,n}\rangle=  hL(m^2+n^2)v_{m,n}^{0,+}\cdot\delta_{k,0}.
	\end{equation*}
	Thus 
	\begin{equation*}
		v=\sum_{m,n,k\neq 0}v_{m,n}^{k\pm}\Lambda_{m,n}^{k\pm}+\sum_{m,n}v_{m,n}^{0-}\Lambda_{m,n}^{0-}.
	\end{equation*}
	The function $v$ satisfies $\eqref{cb2}$ and $\eqref{0M}$ since it belongs to $H$ 
	and it satisfies $\eqref{0D}$ since the functions $\Lambda_{m,n}^{k-}$ and $\Lambda_{m,n}^{0\pm}$ obviously do. Therefore $v\in V$.
\end{proof}
We note that the basis $\{\Lambda_{m,n}^{k\pm}\}$ is an orthogonal basis of space any $H^j$ and we define the projection $\mathfrak{P}$ in the following way:
\begin{equation}
	\label{projection}	
	\mathfrak{P}:\sum_{m,n,k}\Big(v_{m,n}^{k+}\Lambda_{m,n}^{k+}+v_{m,n}^{k-}\Lambda_{m,n}^{k-}\Big)\mkern-5mu\rightarrow\mkern-5mu \sum_{m,n,k\neq 0}\mkern-5mu v_{m,n}^{k\pm}\Lambda_{m,n}^{k\pm}+\sum_{m,n}v_{m,n}^{0-}\Lambda_{m,n}^{0-}. 
\end{equation}
Accordingly, the collection of functions $\{\Lambda_{m,n}^{k\pm}, m,n\in \mathbb{Z},k\in\mathbb{N}\}\cup\{\Lambda_{m,n}^{0-}, m,n\in \mathbb{Z}\}$ 
is an orthogonal basis of each space $V^j$. We denote by $(e_n)_{n\in\mathbb{N}}$ the orthonormal basis of $V$, obtained by the normalization of $\{\Lambda_{m,n}^{k\pm}, m,n\in \mathbb{Z},k\in\mathbb{N}\}\cup\{\Lambda_{m,n}^{0-}, m,n\in \mathbb{Z}\}$ with respect to the $L^2$-norm.
The functions $\Lambda_{m,n}^{k\pm}$ are eigenvectors for the operator $\Delta$. Indeed
\begin{equation*}
	-\Delta \Lambda_{m,n}^{k\pm}=\left((2\pi/L)^2(m^2+n^2)+h^2k^2\right)\Lambda_{m,n}^{k\pm}.
\end{equation*}
From this fact making use of the form $\eqref{projection}$ of the operator $\mathfrak{P}$, we immediately get :
\begin{lemma}
	\label{projVr}
	For any $r\in\mathbb{N}$ the operator $\mathfrak{P}:H^r\oplus\mathbb{R}\mathbbm{1}\mapsto V^r$ is an orthogonal projection.
\end{lemma}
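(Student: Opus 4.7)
The plan is to exploit the explicit form $\eqref{projection}$ of $\mathfrak{P}$ together with the eigenvalue structure of $-\Delta$ just recalled. Concretely, $\mathfrak{P}$ is realised in the basis $\{\Lambda_{m,n}^{k\pm}\}$ as the coordinate-wise linear map that annihilates the components along $\Lambda_{m,n}^{0,+}$ (equivalently, along the vectors $\Upsilon_{m,n}$ spanning $\nabla_2 H^1(\mathbb{T}_L^2)$). Since a coordinate-wise projection in an orthogonal basis is automatically an orthogonal projection, the main task reduces to checking that $\{\Lambda_{m,n}^{k\pm}\}$ remains orthogonal under the homogeneous inner product $\langle\cdot,\cdot\rangle_r$ on $H^r$.

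This I would establish by a short integration-by-parts computation: using that $-\Delta\Lambda_{m,n}^{k\pm}=\lambda_{m,n,k}\Lambda_{m,n}^{k\pm}$ with the eigenvalue displayed above the statement, one gets
\[ \langle\Lambda_{m,n}^{k\pm},\Lambda_{m',n'}^{k'\pm'}\rangle_r = \lambda_{m,n,k}^{\,r}\,\langle\Lambda_{m,n}^{k\pm},\Lambda_{m',n'}^{k'\pm'}\rangle_{L^2}, \]
which vanishes for distinct indices by $L^2$-orthogonality. Hence $\{\Lambda_{m,n}^{k\pm}\}$ is an orthogonal basis of $H^r$, and discarding the modes $\Lambda_{m,n}^{0,+}$ yields an orthogonal basis of $V^r$; the projection $\mathfrak{P}$ is then orthogonal in each $\langle\cdot,\cdot\rangle_r$ for free.

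It remains to absorb the summand $\mathbb{R}\mathbbm{1}$. For $r\geq 1$ constants lie in the kernel of the homogeneous seminorm $\langle\nabla^r\cdot,\nabla^r\cdot\rangle_{L^2}$ and are $L^2$-orthogonal to every non-zero-frequency mode $\Lambda_{m,n}^{k\pm}$, so extending $\mathfrak{P}$ by $\mathfrak{P}(c\mathbbm{1})=0$ preserves orthogonality and is consistent with $\eqref{projection}$. The image lies in $V^r$ because, by Lemma \ref{projL2}, killing the $v_{m,n}^{0,+}$-components is exactly killing the $\nabla_2 H^1(\mathbb{T}_L^2)$-component, so the result satisfies $\eqref{0D}$, and the surviving basis vectors all obey $\eqref{cb2}$ and have zero mean. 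The only conceptual point requiring a moment of thought is that a single formula $\eqref{projection}$ simultaneously implements the orthogonal projection for every $r$; this is immediate from the fact, established above, that the same basis diagonalises every $\langle\cdot,\cdot\rangle_r$ at once, so no step of the argument depends on $r$.
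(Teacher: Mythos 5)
Your argument is correct and takes essentially the same route as the paper: the paper likewise deduces the lemma from the fact that the $\Lambda_{m,n}^{k\pm}$ are eigenvectors of $-\Delta$, hence orthogonal for every inner product $\langle\cdot,\cdot\rangle_r$, combined with the coordinate form $\eqref{projection}$ of $\mathfrak{P}$. You merely spell out the integration-by-parts identity and the handling of the constant summand $\mathbb{R}\mathbbm{1}$, which the paper leaves implicit.
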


Thanks to the eigenvalues of $-\Delta$ associated with the orthogonal basis $\Lambda_{m,n}^{k\pm}$ being all positive we get the Poincaré inequality.
\begin{lemma}
		\label{Poinc}
		There exists a $C>0$ such that for any $v\in V^1$,
		\begin{equation}
		\label{Poincare}
		||v||_{L^2}\leq C||\nabla v||_{L^2}.
		\end{equation}
\end{lemma}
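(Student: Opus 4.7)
The plan is to reduce the statement to the spectral gap of $-\Delta$ on $V^1$, using the orthogonal eigenbasis that has already been identified just before the lemma.

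First I will expand an arbitrary $v\in V^1$ in the orthogonal basis $\{\Lambda_{m,n}^{k\pm}\}$ listed above (restricted to the indices that actually appear in $V$). Writing
\[
v \;=\; \sum_{(m,n,k)} v_{m,n}^{k+}\Lambda_{m,n}^{k+}\,+\,\sum_{(m,n,k)} v_{m,n}^{k-}\Lambda_{m,n}^{k-},
\]
Parseval's identity for the $L^2$-scalar product gives $\|v\|_{L^2}^2$ as a weighted sum of $|v_{m,n}^{k\pm}|^2$. Using that $\Lambda_{m,n}^{k\pm}$ are eigenvectors of $-\Delta$ with eigenvalues $\lambda_{m,n,k}:=(2\pi/L)^2(m^2+n^2)+h^2 k^2$, integration by parts (valid thanks to the boundary conditions encoded in $V^1$) yields
\[
\|\nabla v\|_{L^2}^2 \;=\; \langle -\Delta v,v\rangle \;=\; \sum \lambda_{m,n,k}\bigl(|v_{m,n}^{k+}|^2+|v_{m,n}^{k-}|^2\bigr)\cdot\|\Lambda_{m,n}^{k\pm}\|_{L^2}^2.
\]

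Next I will check that the indices actually contributing to the basis of $V^1$ all have $\lambda_{m,n,k}>0$. The basis consists of $\Lambda_{m,n}^{k\pm}$ with $k\in\mathbb{N}$, and of $\Lambda_{m,n}^{0-}$ for $m,n\in\mathbb{Z}$; in the latter case the vector $(-n,m)$ forces $(m,n)\neq(0,0)$ since otherwise the mode is identically zero. Hence in every basis function either $k\ge 1$ (giving $\lambda_{m,n,k}\ge h^2$) or $m^2+n^2\ge 1$ (giving $\lambda_{m,n,k}\ge (2\pi/L)^2$). Therefore
\[
\lambda_{\min} \;:=\; \min\!\bigl(h^2,\,(2\pi/L)^2\bigr) \;>\;0,
\]
and $\lambda_{m,n,k}\ge \lambda_{\min}$ for every eigenvalue actually appearing.

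Combining these two facts, $\|\nabla v\|_{L^2}^2 \ge \lambda_{\min}\,\|v\|_{L^2}^2$, which yields \eqref{Poincare} with $C=\lambda_{\min}^{-1/2}$. The only subtle point, and the one worth stressing, is the verification that the zero eigenvalue does not occur in the basis of $V^1$: this is exactly what the zero-mean condition \eqref{ci0M} and the definition of the basis enforce, so no extra work is needed beyond the previous lemma.
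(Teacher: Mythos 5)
Your proof is correct and is essentially the paper's argument: the paper dispatches this lemma with the one-line remark that the eigenvalues of $-\Delta$ on the basis $\Lambda_{m,n}^{k\pm}$ of $V$ are all positive, and you have simply written out that spectral argument in full (Parseval plus the uniform lower bound $\lambda_{\min}=\min(h^2,(2\pi/L)^2)$ on the eigenvalues actually occurring). The one detail you add beyond the paper — checking that the zero eigenvalue is excluded because $(m,n)=(0,0)$ modes do not appear in the basis of $V$ — is a worthwhile clarification, since positivity alone without a uniform gap would not suffice.
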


Now, considering the non-linear term of equation $\eqref{init}$
we get,
\begin{lemma}
	\label{intpart}
	For any $w\in V$, $v\in \left(H^2(\mathbb{O})\right)^2$
	\begin{equation*}
	\langle b(w,v),v\rangle=0.
	\end{equation*}
\end{lemma}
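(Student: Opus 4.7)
The plan is to recognize $b(w,v)$ as a transport term by a three-dimensional divergence-free vector field on the torus $\mathbb{O}$, and then conclude by the usual integration-by-parts cancellation. Concretely, I set
\[
W_3(x,y,z):=-\int_{-h}^z \diver_2 w(x,y,\xi)\,d\xi,\qquad \tilde w:=(w_1,w_2,W_3),
\]
so that, by the definition \eqref{defb},
\[
b(w,v)= w_1\partial_x v+w_2\partial_y v+W_3\,\partial_z v=(\tilde w\cdot\nabla)v.
\]
Before exploiting this, I would verify that $\tilde w$ is a genuine periodic field on $\mathbb{O}$. Since $w$ is even in $z$, the function $\diver_2 w$ is even in $z$. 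Combined with the defining property $\diver_2\bar w=0$ of $V$, this forces $W_3(x,y,0)=-\diver_2\bar w=0$ and, using evenness, $W_3(x,y,h)=-2\diver_2\bar w=0$; a similar change of variable shows that $W_3$ is odd in $z$ and $h$-periodic. Periodicity in $(x,y)$ is inherited from $w$.

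Next, a direct computation gives
\[
\diver\tilde w=\partial_x w_1+\partial_y w_2+\partial_z W_3=\diver_2 w-\diver_2 w=0,
\]
so $\tilde w$ is divergence-free on $\mathbb{O}$. It follows that
\[
\langle b(w,v),v\rangle=\int_{\mathbb{O}}\bigl((\tilde w\cdot\nabla)v\bigr)\cdot v\,d\zeta=\tfrac12\int_{\mathbb{O}}\tilde w\cdot\nabla|v|^2\,d\zeta.
\]
Integrating by parts on the torus $\mathbb{O}$ (where there are no boundary contributions) gives $-\tfrac12\int_{\mathbb{O}}(\diver\tilde w)\,|v|^2\,d\zeta=0$, which is the claim.

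The delicate point, and the place where I expect the main obstacle, is regularity. For $w\in V=V^0$ we only have $w\in L^2(\mathbb{O})$, so $\diver_2 w$ is a priori only a distribution in $(x,y)$ and $W_3$ and $\diver\tilde w$ must be interpreted in a generalized sense. To justify the formal manipulation above, I would first establish the identity for $w$ ranging in a dense subspace of smooth vectors in $V$ (e.g.\ finite linear combinations of the basis elements $\Lambda^{k\pm}_{m,n}$ introduced just after Lemma \ref{projL2}), where every step is classical. I would then pass to the limit using that $v\in(H^2(\mathbb{O}))^2$ is sufficiently regular for the map $w\mapsto\langle b(w,v),v\rangle$ to extend to a continuous linear functional on $V$: indeed, by 3D Sobolev embedding, $v\in L^\infty$ and $\nabla v\in L^6$, so that after the integration by parts all of the resulting integrals depend continuously on $w\in L^2$. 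This combines with the identity on the dense subspace to give the lemma for every $w\in V$.
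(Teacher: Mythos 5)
Your proof is correct and rests on the same mechanism as the paper's: the paper integrates each of the two terms of $b(w,v)$ by parts separately and shows each equals $-\tfrac12\langle \diver_2(w)v,v\rangle$, which is exactly your single integration by parts against the reconstructed divergence-free field $\tilde w=(w_1,w_2,W_3)$ carried out term by term. The only addition is your closing density argument for $w\in V=V^0$, which makes rigorous a regularity point that the paper's proof passes over in silence.
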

\begin{proof}
	Indeed, 
	$\langle(w\cdot\nabla_2)v,v\rangle=
		-\frac{1}{2}\langle \diver_2(w)v,v\rangle$
	and
	\begin{equation*}
	\left\langle\left (\int_{-h}^z \diver_2 w(\cdot,\cdot,\xi,\cdot)d\xi\right)\partial_z v,v\right\rangle=
	-\frac{1}{2}\langle \diver_2(w) v,v\rangle. 
	\end{equation*}
	The first formula is obtained through integration by part (over the two horizontal variables only):
	\begin{equation*}
	\int_{\mathbb{T}_L^2}((w\cdot\nabla_2)v)v=
	-\int_{\mathbb{T}_L^2}((w\cdot\nabla_2)v)v-\int_{\mathbb{T}_L^2}\diver_2(w)v^2.
	\end{equation*}
	As for the second it is yielded by the integration by parts over $z$ compounded with $\int_{-h}^z \diver_2(w)|_{z=0}= 0$.
	Hence the result.
\end{proof}
Moreover, we have the following result 
\begin{lemma}
	\label{techb}
	Setting $u=(u_1,u_2), v=(v_1,v_2)\in (L^2(\mathbb{O}))^2$, we may write $\nabla b(u,v)$ as a combination of  terms 
	\begin{equation*}
	D^ku_jD^{2-k}v_{j'},\quad \left(D^k\int_{-h}^z \diver_2u_j(\cdot,\cdot,\xi)d\xi\right)D^{1-k}\frac{\partial v_{j'}}{\partial t},\quad k=0,1,\quad j,j'=1,2, 
	\end{equation*}
	where $D^l$ is any monomial term of degree $l$ made from the spatial derivatives ($\partial_x$,$\partial_y$,$\partial_z$).
	Likewise, $\Delta b(u,v)$ may be out-laid as a sum of terms
	\begin{equation*}
	D^ku_jD^{3-k}v_{j'},\quad \left(D^k\int_{-h}^z\diver_2u_j(\cdot,\cdot,\xi)d\xi\right)D^{2-k}\frac{\partial v_{j'}}{\partial z},\quad k=0,1,2,\quad j,j'=1,2 .
	\end{equation*}
\end{lemma}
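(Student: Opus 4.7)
\textbf{Proof plan for Lemma \ref{techb}.} The plan is to simply apply the Leibniz rule to each of the two summands in the definition \eqref{defb} of $b(u,v)$ and check that every term produced falls into one of the listed families. The only non-trivial observation is how differentiation in $z$ interacts with the antiderivative $\int_{-h}^{z}\diver_2 u_j \,d\xi$, namely that $\partial_z$ of it returns $\diver_2 u_j$; this is what allows terms to migrate between the two families on the list.

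\emph{Step 1 (the term $(u\cdot\nabla_2)v$).} Write $(u\cdot\nabla_2)v_{j'} = \sum_{j} u_j \,\partial_{x_j} v_{j'}$ with $x_1=x,\,x_2=y$. Applying one spatial derivative $D$ by the product rule yields a sum of $D u_j\cdot \partial_{x_j}v_{j'}$ (both factors carrying one derivative, i.e.\ $k=1$ in the first family) and $u_j\cdot D\partial_{x_j}v_{j'}$ (all derivatives on $v$, i.e.\ $k=0$). Applying $\Delta$ amounts to applying two derivatives, and the Leibniz rule distributes degrees $(k,2-k)$ with $k=0,1,2$; each resulting monomial is exactly of the announced shape $D^k u_j D^{3-k}v_{j'}$ (the ``$3-k$'' taking into account the $\partial_{x_j}$ already present on $v$).

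\emph{Step 2 (the integral term).} Let $W_j(x,y,z) := \int_{-h}^{z}\diver_2 u_j(\cdot,\cdot,\xi)\,d\xi$, so the second half of $b(u,v)$ is $-\sum_{j}W_j\,\partial_z v_{j'}$. For any horizontal derivative $D_h\in\{\partial_x,\partial_y\}$ one has $D_h W_j = \int_{-h}^{z}\diver_2(D_h u_j)\,d\xi$, i.e.\ horizontal derivatives commute with the $z$-integral; on the other hand $\partial_z W_j = \diver_2 u_j$, which is a first-order derivative of $u_j$. Distributing $\nabla$ by Leibniz then produces exactly the expressions $D^k W_j \cdot D^{1-k}\partial_z v_{j'}$ for $k=0,1$ (the second family, with $k=1$ meaning a horizontal derivative applied under the integral), with the sole exception of the term where $\partial_z$ hits $W_j$ itself: that one equals $\diver_2(u_j)\,\partial_z v_{j'}$, which is of the form $D^1 u_j \cdot D^1 v_{j'}$ and is therefore absorbed into the first family. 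The calculation for $\Delta$ is identical except that two derivatives are distributed, giving $D^k W_j\cdot D^{2-k}\partial_z v_{j'}$ for $k=0,1,2$; whenever a $\partial_z$ falls on $W_j$ it creates a factor $\diver_2 u_j=D^1 u_j$ (and a second $\partial_z$ on that creates $D^2 u_j$), so those exceptional terms are again of the form $D^k u_j\, D^{3-k}v_{j'}$ and lie in the first family.

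\emph{Main obstacle.} There isn't a serious one; the only bookkeeping care required is the accounting just described, whereby $\partial_z$ acting on $W_j$ converts an ``integral-family'' term into a ``non-integral-family'' term without increasing the total number of derivatives, so that the stated lists remain exhaustive.
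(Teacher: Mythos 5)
Your proposal is correct. The paper in fact states Lemma \ref{techb} without any proof, treating it as a routine Leibniz-rule computation, and your argument is precisely the verification being taken for granted: distribute one (resp.\ two) spatial derivatives over each of the two summands of $b(u,v)$ in \eqref{defb}, use that horizontal derivatives commute with $\int_{-h}^{z}\cdot\,d\xi$, and observe that whenever $\partial_z$ falls on the antiderivative it returns $\diver_2 u$, converting that term into one of the first family without changing the total derivative count — which is exactly the one point worth recording.
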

\begin{lemma}
	\label{estb}
	 We have the following estimates:
	\begin{align*}
	||b(u,v)||_{L^2}\leq & C\min(||u||_{V^3}||v||_{V^1},||u||_{V^1}||v||_{V^3}),
	\\
	||\nabla b(u,v)||_{L^2}\leq & C\min(||u||_{V^3}||v||_{V^2},||u||_{V^2}||v||_{V^3}).
	\end{align*}

\end{lemma}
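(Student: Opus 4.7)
My strategy is to expand $b(u,v)$ into its constituent terms---using the definition \eqref{defb} for the first estimate and the decomposition supplied by Lemma \ref{techb} for the second---and then to bound each piece by H\"older's inequality, with the factors controlled by the three-dimensional Sobolev embeddings $H^{2}(\mathbb{O})\hookrightarrow L^{\infty}$, $H^{3}(\mathbb{O})\hookrightarrow W^{1,\infty}$ and $H^{1}(\mathbb{O})\hookrightarrow L^{6}$. To recover both sides of the min, for each term I run the H\"older split twice, first placing the $u$-factor in the smaller Lebesgue space and then swapping the roles of $u$ and $v$.

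Before estimating the terms I would first record a bound on the vertical integral $I(u)(x,y,z):=\int_{-h}^{z}\diver_{2}u(x,y,\xi)\,d\xi$ that appears in $b$. From the pointwise inequality $|I(u)(x,y,z)|\le \int_{-h}^{h}|\diver_{2}u(x,y,\xi)|\,d\xi$ (uniformly in $z$) together with Minkowski's integral inequality one obtains
\begin{equation*}
\|I(u)\|_{L^{p}(\mathbb{O})}\le C_{h,p}\,\|\diver_{2}u\|_{L^{p}(\mathbb{O})},\qquad p\in[1,\infty].
\end{equation*}
Combined with the Sobolev embeddings above, this gives in particular $\|I(u)\|_{L^{2}}\le C\|u\|_{V^{1}}$, $\|I(u)\|_{L^{6}}\le C\|u\|_{V^{2}}$ and $\|I(u)\|_{L^{\infty}}\le C\|u\|_{V^{3}}$. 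I will also make repeated use of the identity $\partial_{z}I(u)=\diver_{2}u$ when several derivatives fall on $I(u)$.

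For the first estimate a typical term is $u_{j}\partial_{i}v_{j'}$ or $I(u_{j})\partial_{z}v_{j'}$. The H\"older bound $\|u\|_{L^{\infty}}\|\nabla v\|_{L^{2}}\le C\|u\|_{V^{3}}\|v\|_{V^{1}}$ yields one direction, and swapping the roles to $\|u\|_{L^{2}}\|\nabla v\|_{L^{\infty}}\le C\|u\|_{V^{1}}\|v\|_{V^{3}}$ yields the other (the analogous bounds for the integral term follow from the preliminary estimate on $I(u)$). For the second estimate, Lemma \ref{techb} expresses $\nabla b(u,v)$ as a sum of pieces of the form $D^{k}u_{j}\,D^{2-k}v_{j'}$ and $D^{k}I(u_{j})\,D^{1-k}\partial_{z}v_{j'}$ for $k=0,1$. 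The extreme cases $k=0,2$ are treated exactly as above; the only genuinely new case is the middle product $\nabla u\cdot\nabla^{2}v$, which I bound either by $\|\nabla u\|_{L^{\infty}}\|\nabla^{2}v\|_{L^{2}}\le C\|u\|_{V^{3}}\|v\|_{V^{2}}$ or, using the pair $(L^{6},L^{3})$, by $\|\nabla u\|_{L^{6}}\|\nabla^{2}v\|_{L^{3}}\le C\|u\|_{V^{2}}\|v\|_{V^{3}}$, where the final $L^{3}$ bound follows from $H^{1}\hookrightarrow L^{6}\hookrightarrow L^{3}$.

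The only real obstacle is a dimensional one: in $\mathbb{R}^{3}$ the embedding $H^{s}\hookrightarrow L^{\infty}$ holds only for $s>3/2$, so one cannot place $\nabla^{2}v$ in $L^{\infty}$ using merely $v\in V^{3}$. This forces the $(L^{6},L^{3})$ H\"older pair for the middle product of $\nabla b$ and, consequently, the appearance of $\|u\|_{V^{2}}$ rather than $\|u\|_{V^{1}}$ on the $V^{3}$-side of the second min. Everything else is a careful enumeration of the terms produced by Lemma \ref{techb}, combined with $\partial_{z}I(u)=\diver_{2}u$, to ensure that every product indeed admits both H\"older pairings.
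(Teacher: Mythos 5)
Your proposal is correct and follows essentially the same route as the paper's proof: a term-by-term H\"older/Sobolev estimate (with $H^2\hookrightarrow L^\infty$, $H^1\hookrightarrow L^6$), run twice with the roles of $u$ and $v$ swapped to produce both sides of each $\min$, and with the vertical integral controlled via $\|I(u)\|_{L^p}\le C\|\diver_2 u\|_{L^p}$. One harmless slip: for $\nabla b$ the index $k$ in Lemma \ref{techb} runs only over $0,1$, so the genuine middle term is $\nabla u\cdot\nabla v$ (bounded by $\|\nabla u\|_{L^6}\|\nabla v\|_{L^3}\le C\|u\|_{V^2}\|v\|_{V^2}$, exactly as in the paper) rather than $\nabla u\cdot\nabla^2 v$, which belongs to the decomposition of $\Delta b$; since the term you bound is strictly harder, your pairings cover the actual one.
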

\begin{proof}
	For the first one, it is enough to write
	\begin{align*}
	|| (u\cdot\nabla_2)v ||_{L^2}&\leq ||u||_{L^\infty}||\nabla_2v||_{L^2} \qquad\leq||u||_{V^2}||v||_{V^1} 
	\\
	&\textbf{or } ||u||_{L^6}||\nabla_2v||_{L^3}\qquad\leq||u||_{V^1}||v||_{V^2},
	\\
	\left|\left| \int_{-h}^z\diver_2u\frac{\partial v}{\partial z}\right|\right|_{L^2}&\leq ||\diver_2 u||_{L^\infty}\left|\left|\frac{\partial v}{\partial z}\right|\right|_{L^2}\leq ||u||_{V^3}||v||_{V^1}
	\\
	&\textbf{or } ||\diver_2u||_{L^2}\left|\left|\frac{\partial v}{\partial z}\right|\right|_{L^\infty}\leq ||u||_{V^1}||v||_{V^3}.
	\end{align*}
	For the second, by virtue of lemma $\ref{techb}$, we only need to prove the following bounds
	\begin{align*}
	||D^kuD^{2-k}v||_{L^2}&\leq C ||u||_{L^\infty}||D^2 v||_{L^2}\quad\leq C ||u||_{V^2}||v||_{V^2}&& k=0,
	\\
	&\quad C||Du||_{L^6}||Dv||_{L^3}\quad\leq C||u||_{V^2}||v||_{V^2} &&k=1.
	\\
	\left|\left|D^k\int_{-h}^z\diver_2 u D^{1-k}\frac{\partial v}{\partial z}\right|\right|_{L^2}\mkern-15mu&\leq  C||\diver_2 u||_{L\infty}\left|\left|D\frac{\partial v}{\partial z}\right|\right|_{L^2}\mkern-15mu\leq C||u||_{V^3}||v||_{V^2}&& k=0, 
	\\
	&\textbf{or }C ||\diver_2 u||_{L^6}\left|\left|D\frac{\partial v}{\partial z}\right|\right|_{L^3}\mkern-15mu\leq C||u||_{V^2}||v||_{V^3}&& k=0,
	\\
	&\quad C||D \diver_2 u||_{L^6}\left|\left|\frac{\partial v}{\partial z}\right|\right|_{L^3}\mkern-15mu\leq C||u||_{V^3}||v||_{V^2}&& k=1, 
	\\
	&\textbf{or }C||D \diver_2 u||_{L^2}\left|\left|\frac{\partial v}{\partial z}\right|\right|_{L^\infty}\mkern-15mu\leq C||u||_{V^2}||v||_{V^3}&& k=1. 
	\end{align*}
\end{proof}
Lemmas $\ref{projL2}$ and $\ref{estb}$ allow us to rewrite $\eqref{initial}$ in a new form, seemingly weaker but actually equally strong.
Let $u,v\in C([0,T],V^2)\cap E_3$, then thanks to Lemma $\ref{estb}$, $b(u,v)\in L^2\left([0,T],\left(L^2(\mathbb{O})\right)^2\right)$. Moreover, $u$ and $v$ being even in $z$, so is $b(u,v)$. 
Thus $b(u,v)\in H\oplus\mathbb{R}\mathbbm{1}$ and thanks to Lemma $\ref{projVr}$, applied to $r=0$, we define
\begin{equation*}
B(u,v):=\mathfrak{P}b(u,v),\quad u,v\in C([0,T],V^2)\cap E_3.
\end{equation*}
Then for $m\geq 2$, $f\in E_{m-1}$ and $v_0\in V^m$, $\eqref{init}$, $\eqref{ci}$ is equivalent to:
\begin{equation}
\label{proj}
\frac{\partial v}{\partial t}-\Delta v +B(v,v)=f.
\end{equation}
supplemented with $\eqref{ci}$.
Indeed we may consider $v$ a solution of $\eqref{init}$ and apply $\mathfrak{P}$ to $\eqref{init}$. Then we use Formula $\ref{ortho}$ to prove that $\mathfrak{P}(\nabla_2 p)=0$ and $\eqref{projection}$ to show that for $v\in V^r$, $\mathfrak{P}\Delta v=\Delta v$. We thus get $\eqref{proj}$.
Now let $v\in U_m(T)$ be a solution of $\eqref{proj}$; we reconstruct $p$ thanks to the relation
\begin{equation*}
\nabla_2 p= b(v,v)-B(v,v),
\end{equation*} 
and then, $(v,p)$ is a solution of $\eqref{init}$.
Moreover using Lemma $\ref{projVr}$ we have the following corollary to Lemma $\ref{estb}$
\begin{corollary}
	\label{estbis}
	 We have the following estimates:
	\begin{align*}
	||B(u,v)||_{L^2}\leq||b(u,v)||_{L^2}\leq & C\min(||u||_{V^3}||v||_{V^1},||u||_{V^1}||v||_{V^3}),
	\\
	||B(u,v)||_{V^1}\leq||\nabla b(u,v)||_{L^2}\leq & C\min(||u||_{V^3}||v||_{V^2},||u||_{V^2}||v||_{V^3}).
	\end{align*}
\end{corollary}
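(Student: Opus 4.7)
The plan is to chain together Lemma \ref{projVr} and Lemma \ref{estb}: by definition $B(u,v)=\mathfrak{P}b(u,v)$, and Lemma \ref{projVr} guarantees that $\mathfrak{P}$ is an orthogonal projection onto $V^r$ for every $r$, hence norm-decreasing on each $V^r$. Applying this with $r=0$ (the $L^2$ case) and $r=1$ gives the first inequality on each line of the statement.

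More precisely, I would first check that $b(u,v)$ lies in the correct domain: whenever the right-hand sides in Lemma \ref{estb} are finite, $b(u,v)\in H\oplus\mathbb{R}\mathbbm{1}$ (with $\nabla b(u,v)\in H^0$ for the $V^1$ bound), which is exactly the setting in which Lemma \ref{projVr} applies. Then the orthogonal projection property yields
\begin{equation*}
\|B(u,v)\|_{L^2}=\|\mathfrak{P}b(u,v)\|_{L^2}\leq\|b(u,v)\|_{L^2},
\end{equation*}
and, using that the homogeneous norm on $V^1$ is $\|\cdot\|_{V^1}=\|\nabla\cdot\|_{L^2}$ together with the $r=1$ case of Lemma \ref{projVr},
\begin{equation*}
\|B(u,v)\|_{V^1}=\|\mathfrak{P}b(u,v)\|_{V^1}\leq\|b(u,v)\|_{V^1}=\|\nabla b(u,v)\|_{L^2}.
\end{equation*}

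The second inequality in each line is then a direct quote of Lemma \ref{estb}. There is no genuine obstacle here; the only small point to be careful about is invoking Lemma \ref{projVr} at the right level of regularity so that the orthogonal projection argument is legitimate, and matching the homogeneous $V^1$-norm with $\|\nabla\cdot\|_{L^2}$ as defined at the start of the preliminaries.
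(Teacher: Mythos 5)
Your proposal is correct and matches the paper's argument exactly: the paper presents this as an immediate corollary of Lemma \ref{projVr} (the projection $\mathfrak{P}$ is orthogonal, hence norm-decreasing) combined with the bounds of Lemma \ref{estb}, which is precisely the chain you spell out.
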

Besides
\begin{lemma}
	\label{estbhaut}
	The bilinear component $B$ abides by the estimates
	\begin{equation*}
	||B(u,v)||_{V^m}\leq C||u||_{V^{m+1}}||v||_{V^{m+1}},\quad m\geq 2
	\end{equation*}
\end{lemma}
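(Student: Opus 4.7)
The plan is to reduce the homogeneous $V^m$-bound on $B(u,v)$ to a non-homogeneous $H^m$-bound on $b(u,v)$, and then to close the latter using the Banach algebra property of $H^m(\mathbb{O})$, which is available since $m\geq 2>3/2=\dim\mathbb{O}/2$.

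First I would observe that $\mathfrak{P}$ acts diagonally in the basis $\{\Lambda_{m,n}^{k\pm}\}$, whose elements are eigenvectors of $-\Delta$. Consequently $\mathfrak{P}$ commutes, in a Fourier sense, with every power of $\nabla$, and one has the norm inequality $\|\nabla^r\mathfrak{P} g\|_{L^2}\leq\|\nabla^r g\|_{L^2}$ for every $r\geq 0$ and every $g\in H^r\oplus\mathbb{R}\mathbbm{1}$. Combined with iterated Poincaré (Lemma \ref{Poinc}), this shows that the homogeneous norm $\|\nabla^m\cdot\|_{L^2}$ is equivalent to the full Sobolev norm $\|\cdot\|_{H^m}$ on the zero-mean space $V^m$. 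The lemma then reduces to establishing
$$\|b(u,v)\|_{H^m(\mathbb{O})}\leq C\|u\|_{H^{m+1}(\mathbb{O})}\|v\|_{H^{m+1}(\mathbb{O})}.$$

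For this I would invoke the classical fact that, since $m\geq 2>3/2$, the embedding $H^m\hookrightarrow L^\infty$ together with Leibniz's rule makes $H^m(\mathbb{O})$ a Banach algebra. Applied to the first piece of $b(u,v)=(u\cdot\nabla_2)v-\bigl(\int_{-h}^z\diver_2 u\,d\xi\bigr)\partial_z v$, this yields $\|(u\cdot\nabla_2)v\|_{H^m}\leq C\|u\|_{H^m}\|\nabla_2 v\|_{H^m}\leq C\|u\|_{H^{m+1}}\|v\|_{H^{m+1}}$. For the second piece I would first check that the vertical primitive $g\mapsto\int_{-h}^z g(\cdot,\cdot,\xi)\,d\xi$ is bounded from $H^m(\mathbb{O})$ into itself: horizontal derivatives commute with the integral and one vertical derivative recovers $g$, so boundedness follows from a routine Cauchy--Schwarz calculation applied to each mixed derivative. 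This gives $\bigl\|\int_{-h}^z\diver_2 u\,d\xi\bigr\|_{H^m}\leq C\|u\|_{H^{m+1}}$, and the algebra property applied to this function and to $\partial_z v\in H^m$ closes the estimate.

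I do not foresee any genuine obstacle: the only mildly technical ingredient is the $H^m$-boundedness of vertical integration, which is entirely routine. Everything else is a clean combination of the spectral form of $\mathfrak{P}$ (via Lemma \ref{projVr}) and the algebra property of $H^m$ on the three-dimensional torus.
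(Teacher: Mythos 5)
Your proposal is correct and follows essentially the same route as the paper: the paper's proof is precisely the algebra property of $H^m(\mathbb{O})$ for $m\geq 2$ applied to the two terms of $b(u,v)$, with the boundedness of $\mathfrak{P}$ (Lemma \ref{projVr}) and of the vertical primitive used implicitly where you spell them out explicitly. The only additions you make — the spectral argument for $\|\nabla^r\mathfrak{P}g\|_{L^2}\leq\|\nabla^r g\|_{L^2}$ and the Cauchy--Schwarz check that $g\mapsto\int_{-h}^z g\,d\xi$ is bounded on $H^m$ — are details the paper leaves tacit, not a different method.
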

\begin{proof}
	For $m\geq 2$, the space $H^m$is an algebra; thus, firstly
	\begin{equation*}
	||(u\cdot\nabla_2)v||_{H^m}\leq C||u||_{V^m}||Dv||_{H^m}\leq C ||u||_{V^m}||v||_{V^{m+1}}
	\end{equation*}
	secondly
	\begin{equation*}
		\left|\left|\int_{-h}^z \diver_2 u\frac{\partial v}{\partial z}\right|\right|_{H^m}\leq C||D u||_{H^m}||Dv||_{V^m}\leq C||u||_{V^{m+1}}||v||_{V^{m+1}}.
	\end{equation*}
\end{proof}
Thus by virtue of $\eqref{proj}$, an immediate corollary to Theorem $\ref{Petcu}$ is the following:
\begin{corollary}
	\label{corespace}
	For $m\geq 2$, under the assumptions of Theorem $\ref{Petcu}$ we further state that the solution $v$ belongs to the space $U_m(T)$ for all $T>0$.
\end{corollary}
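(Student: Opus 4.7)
The plan is to exploit the equivalent formulation $\eqref{proj}$ established just above the statement: since $f\in L^{\infty}(\mathbb{R}^+,V^{m-1})\subset E_{m-1}(T)$ and $v_0\in V^m$ with $m\geq 2$, Theorem~\ref{Petcu} produces a function $v\in C(\mathbb{R}^+,V^m)\cap L^2_{\text{loc}}(\mathbb{R}^+,V^{m+1})$ which in particular satisfies
\begin{equation*}
\frac{\partial v}{\partial t}=\Delta v-B(v,v)+f.
\end{equation*}
The membership $v\in E_{m+1}(T)$ is then part of Theorem~\ref{Petcu}, so by the definition of $U_m(T)$ all that remains to verify is that $\partial_t v\in E_{m-1}(T)$, i.e.\ that each term on the right-hand side above lies in $L^2([0,T],V^{m-1})$.

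The two linear terms are handled directly. For $\Delta v$, since $v\in L^2([0,T],V^{m+1})$, the function $\Delta v$ belongs to $L^2([0,T],V^{m-1})$. For the forcing, $f\in L^{\infty}(\mathbb{R}^+,V^{m-1})\subset L^2([0,T],V^{m-1})$ trivially.

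The only delicate point, and the main obstacle (though a mild one), is the nonlinear term $B(v,v)$, which has to be split into two cases according to whether Lemma~\ref{estbhaut} applies or not. If $m\geq 3$, then $m-1\geq 2$ and Lemma~\ref{estbhaut} gives
\begin{equation*}
\|B(v,v)\|_{V^{m-1}}\leq C\|v\|_{V^m}^2,
\end{equation*}
so that, using $v\in C([0,T],V^m)$, $\int_0^T\|B(v,v)\|_{V^{m-1}}^2\,dt\leq CT|||v|||_m^4<\infty$. If $m=2$, Lemma~\ref{estbhaut} is not available for the exponent $m-1=1$, and we instead invoke Corollary~\ref{estbis}, which provides
\begin{equation*}
\|B(v,v)\|_{V^1}\leq C\|v\|_{V^3}\|v\|_{V^2}.
\end{equation*}
Combining the fact that $v\in C([0,T],V^2)$ is uniformly bounded on $[0,T]$ with the fact that $v\in L^2([0,T],V^3)$ then yields $\int_0^T\|B(v,v)\|_{V^1}^2\,dt\leq C|||v|||_2^2\int_0^T\|v\|_{V^3}^2\,dt<\infty$.

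Putting the three bounds together gives $\partial_t v\in E_{m-1}(T)$, and together with $v\in E_{m+1}(T)$ this is exactly the definition of $U_m(T)$. The argument also shows that the norm of $v$ in $U_m(T)$ is controlled in terms of $T$, $\|v_0\|_{V^m}$ and $|||f|||_{m-1}$ through the corresponding bounds in Theorem~\ref{Petcu}.
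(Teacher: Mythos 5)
Your proof is correct and follows essentially the same route as the paper, which simply declares the corollary ``immediate'' from the projected form $\eqref{proj}$ of the equation together with the estimates on $B$; you have just filled in the details by reading $\partial_t v$ off the equation and checking each term lies in $E_{m-1}(T)$ via Lemma~\ref{estbhaut} (for $m\geq 3$) and Corollary~\ref{estbis} (for $m=2$). This is the same computation the paper later packages as Corollary~\ref{cestb}.
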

\subsection{Absorbing set for the system}

First here we shall state the existence of an absorbing set for our system as written in \cite{ART9} . This result, one proves by using the a priori estimates, which through Gronwall's lemma yield long term boundedness of the solutions of $\eqref{init}$ (see \cite{ART3} for more details for the case of a constant force):
\begin{theorem}
	\label{Absorbe}
	For an integer $m\geq 1$, suppose that there is a constant $C^*$ such that
	\begin{equation*}
	||f(t)||_{V^{m-1}}\leq C^*,\quad\forall t\geq 0
	\end{equation*}
	then there exists $R=R(C^*)$ such that for all $M>0$, there exist $T=T(M,C^*)$ and $K=K(M,C^*)\geq 2R$
	for which a solution of $\eqref{init}$ with initial value $||v_0||_{V^m}\leq M$ verifies:
	\[
	\begin{cases}
	||v(t)||_{V^m}\leq K,& t\geq 0,
	\\
	||v(t)||_{V^m}\leq R,&  t\geq T
	\end{cases}
	\]
\end{theorem}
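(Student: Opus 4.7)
The plan is to obtain a priori differential inequalities on $\|v(t)\|_{V^k}^2$ for $k=0,1,\dots,m$ in succession, and then combine them via Gronwall-type arguments to extract both the uniform-in-time bound $K$ and an absorbing radius $R$ that depends only on $C^*$.

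First I would establish the $V=V^0$ energy estimate. Testing the projected equation $\eqref{proj}$ against $v$, the nonlinear term vanishes because $\langle B(v,v),v\rangle=\langle b(v,v),v\rangle=0$ by Lemma \ref{intpart}. This yields
\begin{equation*}
\frac{1}{2}\frac{d}{dt}\|v\|_{L^2}^2+\|\nabla v\|_{L^2}^2=\langle f,v\rangle.
\end{equation*}
Combining the Poincaré inequality from Lemma \ref{Poinc} with Young's inequality on the right-hand side, I obtain
\begin{equation*}
\frac{d}{dt}\|v\|_{L^2}^2+c\|v\|_{L^2}^2\leq C(C^*)^2,
\end{equation*}
which, by Gronwall, integrates to $\|v(t)\|_{L^2}^2\leq e^{-ct}\|v_0\|_{L^2}^2+C(C^*)^2/c$. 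This already gives an absorbing set in $V$, and by integration a uniform-in-time $L^2([t,t+1],V^1)$ bound.

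Next I would bootstrap to higher Sobolev norms. For the $V^k$ estimate, with $1\leq k\leq m$, I would test $\eqref{proj}$ with $(-\Delta)^k v$ (using Lemma \ref{projVr} to keep the test function in $V^k$). Using integration by parts together with Lemma \ref{estb} for $k=1$ and Lemma \ref{estbhaut} for $k\geq 2$, the nonlinear term is estimated by distributing derivatives asymmetrically between the two arguments of $B$, and Young's inequality absorbs part of the result into $\|v\|_{V^{k+1}}^2$. This produces a differential inequality of the form
\begin{equation*}
\frac{d}{dt}\|v\|_{V^k}^2+c\|v\|_{V^{k+1}}^2\leq \phi_k(t)\|v\|_{V^k}^2+C\|f\|_{V^{k-1}}^2,
\end{equation*}
where $\phi_k(t)$ depends polynomially on $\|v\|_{V^{k-1}}$ and is in $L^1_{\mathrm{loc}}(\mathbb{R}^+)$ thanks to the uniform $L^2([t,t+1],V^k)$ bound inherited from the preceding level. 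I would then invoke the uniform (Temam-type) Gronwall lemma on each unit interval: the $L^1$-in-time control of $\phi_k$ combined with the uniform $L^\infty$ bound on $\|v\|_{V^{k-1}}$ yields a uniform bound on $\|v\|_{V^k}$ after a transient time $T_k$, and an $M$-dependent bound for $t\in[0,T_k]$. Iterating from $k=1$ up to $k=m$ gives both $K(M,C^*)$ and the $M$-independent absorbing radius $R(C^*)$.

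The main obstacle is the vertical-integral piece $\bigl(\int_{-h}^z\diver_2 v\,d\xi\bigr)\partial_z v$ of the nonlinearity: unlike in 2D Navier-Stokes, it loses one full derivative in $v$, and one must exploit the two-sided bounds of Lemma \ref{estb} (which offer a choice of which factor carries the top derivative) to keep the estimate on the right-hand side of the $V^k$ energy identity on the good side of Young's inequality. A secondary subtlety is that the passage $V\to V^1$ cannot be closed by a direct Gronwall on the $V^1$ identity, which is precisely why the uniform Gronwall lemma, fed by the time-integrated $V^1$ bound from the $L^2$ step, is the right tool at each rung of the ladder.
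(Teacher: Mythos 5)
The paper does not actually prove Theorem \ref{Absorbe}: it defers entirely to \cite{ART9} (and to \cite{ART3} for the constant-force case), describing the argument only as ``a priori estimates, which through Gronwall's lemma yield long term boundedness.'' Your proposal fleshes out that same general strategy, and the $V^0$ step is correct and standard: Lemma \ref{intpart} kills the nonlinearity, Poincar\'e and Young give exponential decay plus an absorbing radius in $L^2$, together with a uniform $L^2([t,t+1],V^1)$ bound.

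The genuine gap is at the very next rung, $V^0\to V^1$. You claim a differential inequality $\frac{d}{dt}\|v\|_{V^1}^2+c\|v\|_{V^2}^2\leq \phi_1(t)\|v\|_{V^1}^2+C\|f\|_{L^2}^2$ with $\phi_1\in L^1_{\mathrm{loc}}$ controlled by the level-zero information, to be closed by the uniform Gronwall lemma. No such inequality is obtainable from the estimates available in the paper, nor from direct testing at all. Lemma \ref{estb} bounds $\|b(v,v)\|_{L^2}$ by $C\|v\|_{V^1}\|v\|_{V^3}$, which cannot be absorbed by the $\|v\|_{V^2}^2$ dissipation; and a direct interpolation estimate of $|\langle (v\cdot\nabla_2)v,\Delta v\rangle|$ produces $C\|v\|_{V^1}^6$ after Young, i.e.\ $\phi_1\sim\|v\|_{V^1}^4$, which is not integrable from the $L^2$ energy balance (this is exactly the 3D Navier--Stokes obstruction), while the vertical term $\bigl(\int_{-h}^z\diver_2 v\bigr)\partial_z v$ is strictly worse, losing a full derivative. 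The known resolution --- and the one used in the references the paper cites, as its own introduction recalls when crediting \cite{ART1} with ``a careful study of the $L^6$ a priori estimates'' --- is not a uniform-Gronwall ladder but a decomposition of $v$ into its barotropic part $\bar v$ and baroclinic part $v-\bar v$, an $L^6$ (or $L^4$) estimate on the baroclinic part exploiting the two-dimensionality of the pressure, a 2D-type $H^1$ estimate on $\bar v$, and an estimate on $\partial_z v$, after which the full $H^1$ bound closes; only from $V^1$ to $V^m$ does your direct $(-\Delta)^k$ ladder become legitimate (and there Lemma \ref{estbhaut} indeed suffices). You correctly sense that the $V\to V^1$ passage is the crux, but the fix you propose does not address the actual obstruction.
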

Without loss of generality in the following we assume that $T\geq 1$ and $R\geq 1$. Now let us consider the regularity of the resolving operator for the problem $\eqref{init}$, $\eqref{ci}$.
\section{Analyticity of the resolving operator}
Let us define the following sets and operators:
\begin{itemize}
	\item for $T\leq\infty$, $\mathcal{E}_j(T):=\bar{B}_{L^{\infty}([0,T],V^j)}(C^*)$ for $j\in\mathbb{N}\cup\{0\}$, where $C^*$ is the same as in Theorem $\ref{Absorbe}$;
	\item for $0<T<\infty$ and some $\epsilon>0$ which we shall fix later,  we set 
	\begin{equation*}
	\hat{E}_{j}(T,\epsilon):=\mathcal{E}_{j}(T)+B_{E_{j}(T)}(\epsilon)\subset E_{j}(T);
	\end{equation*}
	\item $\mathcal{L}$ is the operator describing equation $\eqref{proj}$, $\eqref{ci}$:
	\begin{equation*}
	\mathcal{L}:v \mapsto (\mathcal{L}^1v:=v(0),\mathcal{L}^2v:=\frac{\partial v}{\partial t}-\Delta v+B(v,v)),
	\end{equation*}
	\item $Sol: (u_0,\eta)\mapsto u$ where $u$ is a solution of $\eqref{proj}$, $\eqref{ci}$, that is $Sol$ is the inverse of $\mathcal{L}$,
	\item $d\mathcal{L}$ the linearisation of $\mathcal{L}$:
	\begin{equation*}
	d\mathcal{L}(v): w\mapsto (d\mathcal{L}^1w:=w(0),d\mathcal{L}^2w:=\frac{\partial w}{\partial t}-\Delta w+B(v,w)+B(w,v)).
	\end{equation*}
\end{itemize}
Now, by virtue of Theorem $\ref{Petcu}$, for all $m\geq 1$, $M>0$, $0\leq T'\leq T$ and $(v'_0,f')\in B_{V^m}(M)\times \mathcal{E}_{m-1}(T')$, $Sol(v'_0,f')(t)\in V^m$ is defined for $t\in[0,T']$. In this setting, we have the following result
\begin{theorem}
	\label{AnalSol}
	For any $T>0$, $M>0$ and an integer $m\geq 2$, there exists $\rho(M,T,m)>0$ such that for any $T'\in[\min(1,T/2),T]$, the mapping 
	\begin{equation}
	\label{eqSol}
	Sol:B_{V^m}(M)\times\hat{E}_{m-1}(T',\rho)\mapsto U_m(T'),
	\end{equation}
	is well defined and analytical.
	Besides it is Lipschitz with a Lipschitz constant $L$ which depends only on $M$, $T$ and $m$.
\end{theorem}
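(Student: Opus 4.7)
The plan is to recognise $Sol$ as the local inverse of $\mathcal{L}:U_m(T')\to V^m\times E_{m-1}(T')$ and to apply the analytic inverse function theorem with estimates quantitative enough to give a radius $\rho$ uniform in $(v_0,f,T')$. First I would check that $\mathcal{L}$ is analytic: $\mathcal{L}^1v=v(0)$ is linear continuous via the embedding $U_m(T')\hookrightarrow C([0,T'],V^m)$, while $\mathcal{L}^2v=\partial_tv-\Delta v+B(v,v)$ is the sum of a linear continuous map and the quadratic map $v\mapsto B(v,v)$, which is controlled in $E_{m-1}(T')$ by Corollary~\ref{estbis} (for $m=2$) and Lemma~\ref{estbhaut} (for $m\geq 3$), using the embedding $U_m(T')\hookrightarrow C([0,T'],V^m)$ to absorb the $V^m$ part. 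Being a degree-two polynomial between Banach spaces, $\mathcal{L}$ is therefore analytic.

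Next I would prove that for any $v\in U_m(T')$ the linearisation
\[d\mathcal{L}(v):U_m(T')\longrightarrow V^m\times E_{m-1}(T'),\quad w\longmapsto\bigl(w(0),\,\partial_tw-\Delta w+B(v,w)+B(w,v)\bigr)\]
is a Banach isomorphism whose inverse is bounded in terms of $\|v\|_{U_m(T')}$. This amounts to the well-posedness in $U_m(T')$ of the linearised primitive equation with data $(w_0,g)\in V^m\times E_{m-1}(T')$. I would redo, for the linearised equation, the $V^m$-energy argument underlying Theorem~\ref{Petcu}: test against $(-\Delta)^mw$, split $B(v,w)+B(w,v)$ via Lemma~\ref{techb}, distribute derivatives using Corollary~\ref{estbis} and Lemma~\ref{estbhaut} so that the top-order $V^{m+1}$-norm always lands on $w$ while the sub-top factor takes $v$, and close with Gronwall's lemma to obtain
\[\|w\|_{U_m(T')}\leq C\bigl(\|v\|_{U_m(T')},T\bigr)\bigl(\|w_0\|_{V^m}+\|g\|_{E_{m-1}(T')}\bigr).\]
Injectivity and closed range, plus the open mapping theorem, then give the isomorphism property.

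With these two ingredients fix $(v_0,f)\in B_{V^m}(M)\times\mathcal{E}_{m-1}(T')$. Corollary~\ref{corespace} combined with Theorem~\ref{Absorbe} furnishes a uniform bound $\|Sol(v_0,f)\|_{U_m(T')}\leq N(M,T,m)$, so on the corresponding bounded subset of $U_m(T')$ the norms of $d\mathcal{L}$, $d^2\mathcal{L}$ and $(d\mathcal{L})^{-1}$ are uniformly bounded. A quantitative version of the analytic inverse function theorem then provides some $r=r(M,T,m)>0$ and a local analytic inverse of $\mathcal{L}$ defined on $B_{V^m\times E_{m-1}(T')}\bigl((v_0,f),r\bigr)$; uniqueness of the solution forces this local inverse to coincide with $Sol$ on its natural domain. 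Setting $\rho:=r$ and patching these local inverses together produces an analytic $Sol$ on $B_{V^m}(M)\times\hat{E}_{m-1}(T',\rho)$, and the uniform bound on $\|(d\mathcal{L})^{-1}\|$ yields the claimed Lipschitz constant.

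The hard part will be the linearised energy estimate that underpins the invertibility of $d\mathcal{L}(v)$: it has to be uniform over $v$ ranging in a bounded subset of $U_m(T')$, even though the coefficient $v$ lives in an anisotropic space that only just meets the algebra requirements needed to absorb $B(v,w)+B(w,v)$ into the parabolic smoothing. Distributing the derivatives in that term so that $v$ is always paired with a sub-top-order factor of $w$—exactly the split afforded by Lemma~\ref{techb} and the two-sided bounds of Corollary~\ref{estbis} and Lemma~\ref{estbhaut}—is the non-trivial analytic core of the argument, mirroring the non-linear estimate that yields Theorem~\ref{Petcu} itself.
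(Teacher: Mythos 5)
Your proposal is correct and follows essentially the same route as the paper: the paper likewise realises $Sol$ as $\mathcal{L}^{-1}$ and invokes its ``uniform local inverse theorem'' (Theorem~\ref{invlocalunif}) with the very ingredients you list --- analyticity of $\mathcal{L}$ (Lemma~\ref{bilin}), injectivity (Lemma~\ref{injecL}), the a priori bound from Theorem~\ref{Petcu}, the bounds on $d\mathcal{L}$ and $d^2\mathcal{L}$ from Corollary~\ref{cestb}, and the invertibility of $d\mathcal{L}$ with norm controlled by the base point (Lemmas~\ref{invL}, \ref{invLbis}). The only cosmetic difference is that you unpack inline (quantitative inverse function theorem plus patching of local inverses via uniqueness) what the paper delegates to the appendix theorem.
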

The proof of this theorem will be brought at the end of this section with the help of Theorem $\ref{invlocalunif}$ and thanks to some technical results that we are now going to lay out.
\subsection{Properties of operator $\mathcal{L}$}
First let us establish the set on which operator $\mathcal{L}$ is well-defined. To this end we begin by stating a corollary to Lemmas $\ref{estbis}$ and $\ref{estbhaut}$:

\begin{corollary}
	\label{cestb}
	We have
	\begin{equation*}
	||B(u,v)||_{E_1}\leq C ||u||_{U_2}||v||_{U_2},
	\end{equation*}
	and for $m\geq 2$,
	\begin{equation*}
	||B(u,v)||_{E_m}\leq C\min(||u||_{U_{m+1}}||v||_{U_m},||v||_{U_{m+1}}||u||_{U_m}).
	\end{equation*}
\end{corollary}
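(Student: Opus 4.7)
The plan is to reduce these time-integrated estimates to the pointwise-in-time bounds already given by Corollary \ref{estbis} and Lemma \ref{estbhaut}, and then exploit in the standard way the two controls a function $u \in U_j$ enjoys: on the one hand $u \in E_{j+1}$ by definition, i.e.\ $u \in L^2_t V^{j+1}$, and on the other hand $u \in C([0,T],V^j)$, i.e.\ $u \in L^\infty_t V^j$ (the embedding $U_j \hookrightarrow C([0,T],V^j)$ was recalled in the introduction). The main device is then a H\"older-type split ``$L^\infty_t \cdot L^2_t$'' under the time integral.

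For the first inequality ($m=1$), I would start from Corollary \ref{estbis}, which gives pointwise in time $\|B(u,v)(t)\|_{V^1} \leq C \|u(t)\|_{V^3}\|v(t)\|_{V^2}$. Squaring, integrating over $[0,T]$ and taking the $L^\infty_t$ norm of the $V^2$ factor out of the integral yields
\begin{equation*}
\|B(u,v)\|_{E_1}^2 \leq C\, \|v\|_{L^\infty([0,T],V^2)}^2\, \|u\|_{E_3}^2 \leq C\, \|v\|_{U_2}^2\,\|u\|_{U_2}^2,
\end{equation*}
where in the last step I use $U_2 \hookrightarrow C([0,T],V^2)$ for the first factor and the inclusion $U_2 \subset E_3$ for the second.

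For the second inequality ($m\geq 2$), the analogous scheme applies starting from the pointwise bound of Lemma \ref{estbhaut}, namely $\|B(u,v)(t)\|_{V^m} \leq C\|u(t)\|_{V^{m+1}}\|v(t)\|_{V^{m+1}}$. Again I square and integrate, then pull one factor out in $L^\infty_t V^{m+1}$ and leave the other in $L^2_t V^{m+1}$:
\begin{equation*}
\|B(u,v)\|_{E_m}^2 \leq C\, \|u\|_{L^\infty([0,T],V^{m+1})}^2\, \|v\|_{E_{m+1}}^2 \leq C\, \|u\|_{U_{m+1}}^2\,\|v\|_{U_m}^2.
\end{equation*}
Since the pointwise bound of Lemma \ref{estbhaut} is symmetric in $u$ and $v$, swapping their roles produces the other term of the minimum.

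There is no genuine obstacle: the proof is a routine interpolation between the pointwise (in time) Sobolev estimates on $B$ and the two ways a function in $U_j$ can be measured. The only point requiring minor attention is to notice that for $m=1$ the target estimate happens to be symmetric in $u,v$ precisely because the two arguments land in the same space $U_2$, whereas for $m\geq 2$ it is the symmetry of Lemma \ref{estbhaut} in its two arguments which allows both sides of the $\min$ to be obtained from the same computation.
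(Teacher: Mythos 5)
Your proposal is correct and follows essentially the same route as the paper: both reduce the claim to the pointwise-in-time bounds of Corollary \ref{estbis} (for $m=1$) and Lemma \ref{estbhaut} (for $m\geq 2$) and then split the time integral as $L^\infty_t\cdot L^2_t$ using $U_j\hookrightarrow C([0,T],V^j)$ and $U_j\subset E_{j+1}$. The only immaterial difference is that the paper uses the branch $\|u\|_{V^2}\|v\|_{V^3}$ of the minimum in the $m=1$ case while you use the symmetric one.
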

\begin{proof}
	The first inequality immediately follows from Corollary $\ref{estbis}$
	\begin{equation*}
	\int_0^T||B(u,v)||_{V^1}^2\leq \int_0^TC||u||_{V^2}^2||v||_{V^3}^2\leq C||u||_{C([0,T],V^2)}^2||v||_{E_3}^2\leq C||u||_{U_2}^2||v||_{U_2}^2.
	\end{equation*}
	To get the second one, we proceed likewise, this time starting from Lemma $\ref{estbhaut}$.
\end{proof}
From this corollary we derive the following
\begin{lemma}
	\label{bilin}
	The operator
	\begin{equation*}
		\mathcal{L}:U_m\rightarrow V^m\times E_{m-1},\quad m\geq 2.
	\end{equation*}
	 is analytical.
\end{lemma}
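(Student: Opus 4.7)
The plan is to decompose $\mathcal{L}$ into a bounded linear part plus a bounded quadratic part; since any continuous polynomial mapping between Banach spaces is real-analytic (its Taylor expansion terminates), this is enough.

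First I would treat $\mathcal{L}^1 : v \mapsto v(0)$. The embedding $U_m(T) \hookrightarrow C([0,T],V^m)$ noted right after the definition of $U_j$ (and taken from Theorem~3.1, Chapter~1 of \cite{BOK7}) makes this a bounded linear map from $U_m$ into $V^m$, hence analytic.

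Next, for $\mathcal{L}^2 : v \mapsto \partial_t v - \Delta v + B(v,v)$, I would handle the three summands separately. By definition of $U_m$ the map $v\mapsto \partial_t v$ is bounded linear from $U_m$ to $E_{m-1}$. The Laplacian loses two spatial derivatives, so $v\mapsto -\Delta v$ is bounded linear from $U_m\subset E_{m+1}$ into $E_{m-1}$. For the nonlinear term $B(v,v)$, I would invoke Corollary~\ref{cestb}: when $m=2$, the first estimate directly yields
\[
\|B(v,v)\|_{E_1}\leq C\|v\|_{U_2}^2,
\]
while for $m\geq 3$ I would apply the second estimate with the index $m-1\geq 2$ and use the continuous embedding $U_m\hookrightarrow U_{m-1}$ to obtain
\[
\|B(v,v)\|_{E_{m-1}}\leq C\|v\|_{U_m}\|v\|_{U_{m-1}}\leq C\|v\|_{U_m}^2.
\]
In both cases $v\mapsto B(v,v)$ is the diagonal of a bounded symmetric bilinear map $U_m\times U_m\to E_{m-1}$, hence a continuous homogeneous quadratic map.

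Finally I would conclude: $\mathcal{L}$ is the sum of a bounded linear operator and a bounded homogeneous quadratic operator from $U_m$ into $V^m\times E_{m-1}$, so it is a continuous polynomial of degree at most two between Banach spaces; such a map admits a (finite) convergent Taylor series at every point, and is therefore real-analytic on the whole of $U_m$. There is no serious obstacle in this proof: the bilinear estimates supplied by Corollary~\ref{cestb} do all the work, and the only point requiring a tiny bit of attention is the splitting $m=2$ versus $m\geq 3$, because the first bound in Corollary~\ref{cestb} is the only one available when the target is $E_1$.
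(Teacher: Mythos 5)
Your proof is correct and follows essentially the same route as the paper: both treat $\mathcal{L}^1$ and the linear part of $\mathcal{L}^2$ as bounded linear maps (using $U_m\hookrightarrow C([0,T],V^m)$ and $U_m\subset E_{m+1}$) and reduce the nonlinear term to the boundedness of the bilinear map $B:U_m\times U_m\to E_{m-1}$ supplied by Corollary~\ref{cestb}, concluding analyticity from continuity of a quadratic polynomial. Your explicit split between $m=2$ (first estimate of the corollary) and $m\geq 3$ (second estimate at index $m-1$) is a welcome clarification of a step the paper leaves implicit, but it is not a different argument.
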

%
\begin{proof}
	We analyze $\mathcal{L}$ term-wise. 
		By virtue of Corollary $\ref{cestb}$, we have that
		\begin{equation*}
			\int_0^T||B(u,u)||_{V^{m-1}}^2\leq ||u||_{U^m}^4.
		\end{equation*}
		Thus $B$ is an homogeneous operator of rank $2$, continuous and thus analytic. 
		Another term of $\mathcal{L}^2$, the operator
		\begin{equation*}
		u\mapsto \frac{d u}{dt}-\Delta u,\quad U_m\rightarrow E_{m-1},
		\end{equation*}
		obviously is linear continuous.
		Finally, the mapping $\mathcal{L}^1$
		is linear and continuous since $U_{m}\subset C([0,T],V^m)$. Therefore, $\mathcal{L}$ is analytical.
\end{proof}
Then let us state an injection property for $\mathcal{L}$.
\begin{lemma}
	\label{injecL}
	Let $m\geq 2$, $T>0$ and $u_1,u_2\in U_m(T)$ such that $\mathcal{L}(u_1)=\mathcal{L}(u_2)$. Then $u_1(t)=u_2(t)$ for any $t\in[0,T]$.
\end{lemma}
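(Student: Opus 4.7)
The approach is the standard energy-method uniqueness argument. Setting $w := u_1 - u_2$, the equality $\mathcal{L}(u_1) = \mathcal{L}(u_2)$ together with the bilinearity of $B$ gives the linear equation
\[
\partial_t w - \Delta w + B(u_1, w) + B(w, u_2) = 0, \qquad w(0) = 0.
\]
Since $u_1, u_2 \in U_m(T) \hookrightarrow C([0,T], V^m)$ with $m \geq 2$, one has $w \in U_m(T)$, which provides enough regularity to pair the equation with $w$ in $L^2(\mathbb{O})$ and to interpret $\langle \partial_t w, w\rangle$ as $\tfrac{1}{2}\tfrac{d}{dt}\|w\|_{L^2}^2$.

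After pairing, the $-\Delta w$ term produces $\|\nabla w\|_{L^2}^2$; since $w(t) \in V$, Lemma \ref{projVr} lets me replace $B$ by $b$ in the two pairings $\langle B(u_1, w), w\rangle$ and $\langle B(w, u_2), w\rangle$. Lemma \ref{intpart}, applied pointwise in $t$ with $u_1(t) \in V$ and $w(t) \in (H^2(\mathbb{O}))^2$ (which holds because $m \geq 2$), then eliminates the first of these: $\langle b(u_1, w), w\rangle = 0$. This is the key cancellation. I am thus left with
\[
\frac{1}{2}\frac{d}{dt}\|w\|_{L^2}^2 + \|\nabla w\|_{L^2}^2 = -\langle b(w, u_2), w\rangle.
\]

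The remaining step is to estimate the right-hand side so that Gronwall's lemma applies. I would use the Sobolev embedding $V^m \hookrightarrow W^{1,\infty}(\mathbb{O})$ in 3D (valid for $m \geq 2$), giving a uniform-in-$t$ bound on $\|u_2(t)\|_{W^{1,\infty}}$ from $u_2 \in C([0,T],V^m)$. The horizontal transport part $(w\cdot\nabla_2)u_2$ of $b(w,u_2)$ is then controlled directly by $\|u_2\|_{W^{1,\infty}}\|w\|_{L^2}^2$. For the vertical (baroclinic) part, Cauchy--Schwarz in $z$ together with the bound on $\|\partial_z u_2\|_{L^\infty}$ and Young's inequality yields a bound $\tfrac{1}{2}\|\nabla w\|_{L^2}^2 + C\|u_2\|_{W^{1,\infty}}^2\|w\|_{L^2}^2$, whose first summand is absorbed on the left. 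The resulting differential inequality reads $\tfrac{d}{dt}\|w\|_{L^2}^2 \leq C(t)\|w\|_{L^2}^2$ with $C\in L^\infty([0,T])$, and Gronwall combined with $w(0)=0$ forces $w \equiv 0$ on $[0,T]$.

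The main obstacle lies precisely in this last nonlinear estimate: Corollary \ref{estbis} in its stated form requires one factor in $V^3$, which is not available uniformly in $t$ for either $w(t)$ or $u_2(t)$. Distributing the derivatives by hand and exploiting the $L^\infty$-type control coming from the embedding $V^m \hookrightarrow W^{1,\infty}$ is what makes the Gronwall argument close.
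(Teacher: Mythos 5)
Your setup coincides with the paper's: form $w=u_1-u_2$, derive the linear equation with $w(0)=0$, pair with $w$ in $L^2$, and eliminate $\langle B(u_1,w),w\rangle$ via Lemma~\ref{intpart}. The divergence comes in the final estimate of $\langle b(w,u_2),w\rangle$, and there your argument has a genuine gap: the embedding $V^m\hookrightarrow W^{1,\infty}(\mathbb{O})$ is \emph{false} for $m=2$ in three space dimensions. For $u\in H^2(\mathbb{O})$ one only has $\nabla u\in H^1(\mathbb{O})$, and $H^1\not\hookrightarrow L^\infty$ in 3D (one needs $H^s$ with $s>3/2$). Since the lemma is asserted for all $m\geq 2$, the borderline case $m=2$ --- precisely the one the paper uses later --- is not covered by your estimate as written.

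Moreover, the obstacle you invoke to justify abandoning Corollary~\ref{estbis} is not real: Gronwall's lemma does not require the coefficient to be bounded in $t$, only integrable. The paper estimates $|\langle B(w,u_2),w\rangle|\leq C\|w\|_{V^1}\|u_2\|_{V^3}\|w\|_{L^2}$ by Corollary~\ref{estbis}, absorbs $\|\nabla w\|_{L^2}^2$ by Young's inequality, and obtains $\tfrac{d}{dt}\|w\|_{L^2}^2\leq C\|u_2\|_{V^3}^2\|w\|_{L^2}^2$; since $u_2\in U_m(T)\subset L^2([0,T],V^{m+1})\subset L^2([0,T],V^3)$ for $m\geq 2$, the coefficient $\|u_2(t)\|_{V^3}^2$ lies in $L^1([0,T])$ and Gronwall closes with $w(0)=0$. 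If you insist on using only the uniform-in-$t$ control $\|u_2(t)\|_{V^2}\leq C$, you would need to replace the $W^{1,\infty}$ bound by anisotropic H\"older estimates, e.g. $\|\nabla_2 u_2\|_{L^3}\|w\|_{L^6}\|w\|_{L^2}$ for the transport part and $\bigl\|\int_{-h}^z\diver_2 w\,d\xi\bigr\|_{L^2}\|\partial_z u_2\|_{L^6}\|w\|_{L^3}$ for the baroclinic part, followed by interpolation and Young; as it stands, the last step of your proof does not go through.
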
 
\begin{proof}
	If $\mathcal{L}(u_1)=\mathcal{L}(u_2)$,  then $\tilde{u}=u_1-u_2$ is a solution of
	\begin{equation*}
	\frac{d\tilde{u}}{dt}-\Delta\tilde{u}+B(u_1,\tilde{u})+B(\tilde{u},u_2)=0,\quad\tilde{u}_0=0.
	\end{equation*}
	Thus applying to the equation the scalar product with $\tilde{u}$, we get, thanks to Lemma $\ref{intpart}$ and Corollary $\ref{estbis}$,
	\begin{equation*}
	\frac{d||\tilde{u}||_{L^2}^2}{dt}+||\nabla\tilde{u}||_{L^2}^2\leq C||\tilde{u}||_{V^1}||u_2||_{V^3}||\tilde{u}||_{L^2}.
	\end{equation*}
	Therefore, by virtue of Young's inequality, $\frac{d||\tilde{u}||_{L^2}^2}{dt}\leq C||u_2||_{V^3}^2||\tilde{u}||_{L^2}^2$. Whence
	\begin{equation*}
	||\tilde{u}(t)||_{L^2}^2\leq ||\tilde{u}_0||_{L^2}^2e^K,
	\end{equation*}
	so that $\tilde{u}(t)=0$ for any $t\in[0,T]$.
\end{proof}

Now let us examine the non-degeneracy of $\mathcal{L}$
\begin{lemma}
	\label{invL}
	For any $T>0$, $u\in U_2(T)$ and $r=0,1,2$, the mapping $d\mathcal{L}(u)$ is an isomorphism between $U_{r}(T)$ and $V^{r}\times E_{r-1}(T)$, whose inverse has a norm depending solely on $||u||_{U_2}$.
\end{lemma}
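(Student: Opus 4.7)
The plan is to solve, for every $(w_0,g)\in V^r\times E_{r-1}(T)$, the linear non-autonomous Cauchy problem
\begin{equation*}
\partial_t w-\Delta w+B(u,w)+B(w,u)=g,\quad w(0)=w_0,
\end{equation*}
in $U_r(T)$, with a solution norm depending only on $\|u\|_{U_2}$, $\|w_0\|_{V^r}$, and $\|g\|_{E_{r-1}(T)}$. Since $d\mathcal{L}(u)$ is bounded linear $U_r(T)\to V^r\times E_{r-1}(T)$ by the argument of Lemma \ref{bilin}, the open mapping theorem together with a uniform a priori bound will yield the desired estimate on $d\mathcal{L}(u)^{-1}$.

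I would first prove injectivity by mimicking Lemma \ref{injecL}: given $w_1,w_2\in U_r(T)$ with $d\mathcal{L}(u)w_1=d\mathcal{L}(u)w_2$, take the $L^2$-scalar product of the equation for $\tilde w=w_1-w_2$ with $\tilde w$, cancel $\langle B(u,\tilde w),\tilde w\rangle$ via Lemma \ref{intpart}, bound $\langle B(\tilde w,u),\tilde w\rangle$ by Corollary \ref{estbis}, and invoke Gronwall with $\int_0^T\|u\|_{V^3}^2\le\|u\|_{U_2}^2<\infty$. For existence I would set up a Galerkin scheme on the orthonormal basis $(e_n)$: denoting by $P_N$ the $L^2$-projection onto $\mathrm{span}(e_1,\dots,e_N)$, the linear finite-dimensional system
\begin{equation*}
\partial_t w_N-\Delta w_N+P_N\bigl(B(u,w_N)+B(w_N,u)\bigr)=P_Ng,\quad w_N(0)=P_Nw_0,
\end{equation*}
has a unique solution on $[0,T]$ by standard ODE theory, its coefficients being $L^1$ in time thanks to $u\in U_2$.

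The heart of the matter is the a priori estimate. For each $r\in\{0,1,2\}$ I would test the equation against $(-\Delta)^r w_N$: for $r=0$ Lemma \ref{intpart} removes $\langle B(u,w_N),w_N\rangle$; for $r=1,2$ I expand $\nabla B$, respectively $\Delta B$, using Lemma \ref{techb} and control each summand $D^ku\,D^{j-k}w_N$ by H\"older and Sobolev embeddings. In all cases one is led to a differential inequality of the shape
\begin{equation*}
\tfrac{d}{dt}\|w_N\|_{V^r}^2+\|w_N\|_{V^{r+1}}^2\le C\bigl(1+\|u\|_{V^3}^2\bigr)\|w_N\|_{V^r}^2+C\|g\|_{V^{r-1}}^2,
\end{equation*}
which Gronwall integrates against $\int_0^T\|u\|_{V^3}^2\le\|u\|_{U_2}^2$ into a uniform $C([0,T],V^r)\cap E_{r+1}(T)$ bound depending only on $\|u\|_{U_2}$, $\|w_0\|_{V^r}$ and $\|g\|_{E_{r-1}(T)}$. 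The main obstacle is the $r=2$ case: the summand $D^2u\cdot Dw_N$ appearing in $\Delta B(u,w_N)$ is borderline and must be handled by an interpolation such as $\|D^2u\|_{L^3}\le C\|u\|_{V^2}^{1/2}\|u\|_{V^3}^{1/2}$ combined with $\|Dw_N\|_{L^6}\le C\|w_N\|_{V^2}$, so that only a time-integrable power of $\|u\|_{V^3}$ enters the Gronwall coefficient and no pointwise-in-$t$ norm of $u$ above $V^2$ appears.

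Finally, $\partial_tw_N$ is read off the equation and bounded in $E_{r-1}(T)$ by Corollary \ref{cestb} and the previous step, so $(w_N)$ is bounded in $U_r(T)$. Weak compactness and the Aubin-Lions lemma let me pass to the limit in each term of the Galerkin equation and obtain a solution $w\in U_r(T)$ obeying the same estimate, which by the injectivity step is unique. Combined with the continuity of $d\mathcal{L}(u)$, this establishes that $d\mathcal{L}(u):U_r(T)\to V^r\times E_{r-1}(T)$ is a topological isomorphism whose inverse is controlled solely in terms of $\|u\|_{U_2}$.
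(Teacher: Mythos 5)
Your proposal is correct and follows essentially the same route as the paper: energy estimates obtained by testing against $(-\Delta)^r w$, Gronwall's lemma applied with $\int_0^T\|u\|_{V^3}^2\leq \|u\|_{U_2}^2$, Galerkin approximation for existence, uniqueness from linearity plus the same energy bound, and recovery of $\partial_t w\in E_{r-1}$ from the equation via Corollary \ref{cestb}. The only (harmless) divergence is at $r=2$, where the paper integrates by parts once so that the nonlinear contribution appears as $\langle\nabla(B(u,w)+B(w,u)),\nabla\Delta w\rangle$ and is controlled directly by Corollary \ref{estbis}, whereas you expand $\Delta B$ via Lemma \ref{techb} and handle the borderline summand by interpolation.
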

\begin{proof}
	To prove the existence of a solution to the linearised equation, i.e an inverse to $d\mathcal{L}$, and the boundedness of this inverse, let us start with preliminary estimates on the linearised equation. 
	In what follows, for $r=0,1$ we shall content ourselves with stating the a priori estimates, proving the full result for $r=2$ alone.
	
	For the following considerations, we take any $(v_0,f)\in V^r\times E_{r-1}$ and denote $v=Sol(v_0,f)$, i.e $v$ is a solution of
	\begin{align}
		\label{initlin}
		\frac{d v}{dt}-\Delta v+B(u,v)+B(v,u)&=f,
		\\
		\label{cilin}
		v(0)&=v_0.
	\end{align}
	\textit{A priori estimates for the case $r=0$:}
	applying to $\eqref{initlin}$ the scalar product with $v$ in $L^2$ and using Lemma $\ref{intpart}$ and Corollary $\ref{estbis}$, and Cauchy-Schwartz inequality, we get
	\begin{equation*}
	\frac{1}{2}\frac{d ||v||_{L^2}^2}{dt}+||\nabla v||_{L^2}^2\leq ||f||_{V^{-1}}||v||_{V^1} +||B(v,u)||_{L^2}||v||_{L^2}.
	\end{equation*}
	Henceforth we shall replace all constants by polyvalents $C$ and $C'$. Therefore
	\begin{equation*}
		\frac{d ||v||_{L^2}^2}{dt}+C||\nabla v||_{L^2}^2\leq C'(||f||_{V^{-1}}||v||_{V^{1}}+||u||_{V^3}||v||_{V^1}||v||_{L^2}).
	\end{equation*}
	Thus, applying once more Young's inequality to the last right-hand term, we get
	\begin{equation*}
		\label{estlinL2}
		\frac{d ||v||_{L^2}^2}{dt}+C||\nabla v||_{L^2}^2\leq C'(||f||_{V^{-1}}^2+||u||_{V^3}^2||v||_{L^2}^2).
	\end{equation*}
	Therefore, for any $0\leq t\leq T$,
	\begin{equation*}
	||v||_{L^2}^2(t)\leq (||v(0)||_{L^2}+C'||f||_{E_{-1}}^2)e^{C'\int_0^t||u||_{V^3}^2},
	\end{equation*}
	and thus
	\begin{equation}
	\label{L2lin}
	||v||_{L^2}^2(t)\mkern-3mu \leq\mkern-3mu (||v(0)||_{L^2}+C'||f||_{E_{-1}}^2)\chi(||u||_{U_2}),\mkern-5mu \text{ where } \chi(||u||_{U_2})\mkern-3mu := e^{C'||u||_{U_2}^2}.
	\end{equation}
	Moreover
	\begin{multline*}
	\int_0^t ||\nabla v||_{L^2}^2\leq \frac{1}{C}\left(||v(0)||_{L^2}+C'\int_0^t||f||_{V^{-1}}^2(s)+||u||_{V^3}^2(s)||v||_{L^2}^2(s)ds\right)
	\\
	\leq \frac{1}{C}\left(||v(0)||_{L^2}+C'||f||_{E_{-1}}^2+\int_0^t||u||_{V^3}^2(s)ds||v||_{\mathcal{E_0}}^2\right),
	\end{multline*}
	thus by virtue of $\eqref{L2lin}$, we get
	\begin{multline}
		\int_0^t ||\nabla v||_{L^2}^2\leq\left(||v(0)||_{L^2}+C'||f||_{E_{-1}}^2\right)\chi'(||u||_{U_2})
		\\
		\text{ with }\chi'(||u||_{U_2}):= \frac{1}{C}\left(1+||u||_{U_2}^2e^{C'||u||_{U_2}^2}\right).
	\end{multline}
	\textit{A priori estimates for the case $r=1$:}
	applying to  $\eqref{initlin}$ the scalar product with $-\Delta v$, Cauchy-Schwartz and Young's inequality to the first right-hand term as well as Corollary $\ref{estbis}$ and using once more Young's inequality on the second right-hand term, we get
	\begin{equation*}
		\frac{d||\nabla v||_{L^2}^2}{dt}+C||\Delta v||_{L^2}^2\leq C'(||f||_{L^2}^2+ ||u||_{V^3}^2||\nabla v||_{L^2}^2).
	\end{equation*}
	Therefore in a similar as for $r=0$,
	\begin{equation}
		\label{nabvlin}
		||\nabla v||_{L^2}^2(t)
		\leq (C'||f||_{E_0}^2+||v(0)||_{V^1}^2)\chi(||u||_{U_2}),
	\end{equation}
	and
	\begin{equation}
		\int_0^t||\Delta v||_{L^2}^2
		\leq\left(||v(0)||_{V^1}^2+C'||f||_{E_0}^2\right)\chi'(||u||_{U_2}). 
	\end{equation}

	\textit{Complete proof for the case $r=2$:}
	now let's apply to $\eqref{initlin}$ the scalar product with $-\Delta^2 v$, yielding
	\begin{equation*}
	\frac{1}{2}\frac{d||\Delta v||_{L^2}^2}{dt}+||\nabla\Delta v||_{L^2}^2\leq |\langle \nabla f,\nabla\Delta v\rangle|+|\langle \nabla(B(u,v)+B(v,u)),\nabla\Delta v\rangle|.
	\end{equation*}

	Thus by reusing Corollary $\ref{estbis}$, Cauchy-Schwartz and Young's inequalities, we get
	\begin{equation}
	\label{estlinH2}
	\frac{d||\Delta v||_{L^2}^2}{dt}+C||\nabla\Delta v||_{L^2}^2\leq C'(||f||_{V^1}^2+||u||_{V^3}^2||v||_{V^2}^2).
	\end{equation}
	Therefore, proceeding as we did for $r=0$, we get
	\begin{equation}
	\label{detvlin}
	||\Delta v||_{L^2}^2(t)
	\leq (C'||f||_{E_1}^2+||v(0)||_{V^2}^2)\chi(||u||_{U_2}),
	\end{equation}
	and
	\begin{equation}
	\int_0^t||\nabla\Delta v||_{L^2}^2
	\leq \left(||v(0)||_{V^2}^2+C'||f||_{E_1}^2\right)\chi'(||u||_{U_2}).
	\end{equation} 
	
		Thanks to all those estimates we may state (through Galerkin's approximations) the existence  of a solution for the problem $\eqref{initlin}$, $\eqref{cilin}$ and that it belongs to the space $C([0,T],V^2)\cap E_3(T)$. Since the equation is linear, the estimates also implies the uniqueness of a solution.
	By virtue of Corollary $\ref{cestb}$, we have 
	
	\begin{equation*}
	B(u,v)+B(v,u)\in E_1(T).
	\end{equation*}
	Now, since $\Delta v\in L^2([0,T],V^1)$, then in view of $\eqref{initlin}$, we have $\frac{\partial v}{\partial t}\in L^2([0,T],V^1)$. 
	Therefore $v\in U_2(T)$.
	Thus the existence of an inverse operator is proven. Furthermore, we get from this argument that the norm of this operator is bounded by a value depending only on $||u||_{U_2}$. Thus our assertion is proved.	
\end{proof}
Naturally, a better smoothness of the curve $u(t)$ implies that the mapping $d\mathcal{L}(u)$ operates in smoother spaces:
\begin{lemma}
	\label{invLbis}
	For $m\geq 3$ and $u\in U_{m-1}$, the operator $d\mathcal{L}(u)$ is an isomorphism between $U_{m}$ and $V^{m}\times E_{m-1}$. Its inverse has a norm depending solely on $||u||_{U_{m-1}}$.
\end{lemma}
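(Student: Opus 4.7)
The plan is to reduce to the situation of Lemma \ref{invL} for existence and uniqueness, and then to bootstrap the regularity of the solution by successive higher-order energy estimates, using Lemma \ref{estbhaut} to control the bilinear terms. Since $m-1\geq 2$, the continuous embedding $U_{m-1}\hookrightarrow U_2$ allows us to apply Lemma \ref{invL}: for any $(v_0,f)\in V^m\times E_{m-1}\subset V^2\times E_1$ there exists a unique $v\in U_2(T)$ solving $\eqref{initlin}$, $\eqref{cilin}$. The task is therefore to show that this $v$ in fact lies in $U_m(T)$, with norm bounded in terms of $\|u\|_{U_{m-1}}$, $\|v_0\|_{V^m}$ and $\|f\|_{E_{m-1}}$.

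I would proceed by induction on $m$, the base case $m=2$ being covered by Lemma \ref{invL}. Assuming the statement holds up to index $m-1$, take the homogeneous scalar product $\langle\cdot,\cdot\rangle_m$ of $\eqref{initlin}$ with $v$ (formally, or rigorously on Galerkin truncations as in Lemma \ref{invL}). Using Cauchy--Schwarz, this yields
\begin{equation*}
\tfrac{1}{2}\tfrac{d}{dt}\|v\|_{V^m}^2+\|v\|_{V^{m+1}}^2\leq \|f\|_{V^{m-1}}\|v\|_{V^{m+1}}+\bigl(\|B(u,v)\|_{V^{m-1}}+\|B(v,u)\|_{V^{m-1}}\bigr)\|v\|_{V^{m+1}}.
\end{equation*}
Lemma \ref{estbhaut} applied at index $m-1\geq 2$ gives
\begin{equation*}
\|B(u,v)\|_{V^{m-1}}+\|B(v,u)\|_{V^{m-1}}\leq C\|u\|_{V^m}\|v\|_{V^m},
\end{equation*}
and after absorbing $\|v\|_{V^{m+1}}$ through Young's inequality, we obtain the differential inequality
\begin{equation*}
\tfrac{d}{dt}\|v\|_{V^m}^2+C\|v\|_{V^{m+1}}^2\leq C'\bigl(\|f\|_{V^{m-1}}^2+\|u\|_{V^m}^2\|v\|_{V^m}^2\bigr).
\end{equation*}

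Since $u\in U_{m-1}(T)\subset E_m(T)$, the coefficient $t\mapsto\|u(t)\|_{V^m}^2$ is in $L^1([0,T])$ with $L^1$-norm bounded by $\|u\|_{U_{m-1}}^2$. Gronwall's lemma then provides
\begin{equation*}
\|v(t)\|_{V^m}^2\leq\bigl(\|v_0\|_{V^m}^2+C'\|f\|_{E_{m-1}}^2\bigr)\exp\bigl(C'\|u\|_{U_{m-1}}^2\bigr),
\end{equation*}
and integrating yields the parallel bound on $\int_0^T\|v\|_{V^{m+1}}^2\,dt$. To complete the conclusion $v\in U_m(T)$, extract $\partial_t v$ from $\eqref{initlin}$ and estimate in $V^{m-1}$: the term $\Delta v$ lies in $L^2([0,T],V^{m-1})$ by the previous bound, $f$ lies in $E_{m-1}$ by assumption, and Lemma \ref{estbhaut} combined with the preceding estimates shows that $B(u,v)+B(v,u)$ lies in $E_{m-1}(T)$ with the required bound. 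Since all resulting constants depend only on $\|u\|_{U_{m-1}}$, $M$, $T$ and $m$, the norm of $d\mathcal{L}(u)^{-1}$ is controlled by $\|u\|_{U_{m-1}}$ alone.

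The main obstacle is the same one that arises in Lemma \ref{invL}: the energy estimates above are formal and must be justified either by a Galerkin approximation scheme or by first mollifying the data and passing to the limit, exploiting the uniqueness already furnished by Lemma \ref{invL}. Beyond that, the slight delicacy lies in checking that Lemma \ref{estbhaut} is applicable at the exact index $m-1$ that matches the regularity of $u$ assumed in the statement; the hypothesis $m\geq 3$ is precisely what guarantees $m-1\geq 2$, so the algebra property of $H^{m-1}$ used in the proof of that lemma is available.
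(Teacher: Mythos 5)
Your proof follows essentially the same route as the paper: a higher-order energy estimate obtained by pairing the linearised equation with $(-\Delta)^m v$, the bound $\|B(u,v)+B(v,u)\|_{V^{m-1}}\leq C\|u\|_{V^m}\|v\|_{V^m}$ from Lemma \ref{estbhaut} (which is exactly where $m\geq 3$ enters), Young plus Gronwall with the $L^1$ coefficient $\|u\|_{V^m}^2$, and recovery of $\partial_t v$ from the equation to conclude $v\in U_m(T)$. Your handling of the Young step (keeping $\|u\|_{V^m}^2\|v\|_{V^m}^2$ rather than absorbing into the dissipation) is in fact slightly cleaner than the paper's displayed inequality, but the argument is the same.
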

\begin{proof}
	We start by improving the regularity of the solution $v$ constructed in Lemma $\ref{invL}$.
	
	Applying to $\eqref{initlin}$ the scalar product with $(-\Delta)^m v$, we get the following estimate
	\begin{equation*}
	\frac{1}{2}\frac{d||\nabla^m v||_{L^2}^2}{dt}+||\nabla^{m+1} v||_{L^2}^2\leq |\langle B(u,v)+B(v,u),\Delta^{m} v\rangle|+|\langle f,\Delta^{m} v\rangle|.
	\end{equation*}
	Thus, 
	\begin{multline*}
	\frac{d||\nabla^m v||_{L^2}^2}{dt}+C||\nabla^{m+1} v||_{L^2}^2\leq ||B(u,v)+B(v,u)||_{V^{m-1}}||\nabla^{m+1}v||_{L^2}
	\\
	+||f||_{V^{m-1}}||\nabla^{m+1} v||_{L^2}.
	\end{multline*}
	Therefore, by virtue of Lemma $\ref{estbhaut}$, we get
	\begin{equation*}
	\frac{d||\nabla^m v||_{L^2}^2}{dt}+C||\nabla^{m+1} v||_{L^2}^2\leq C'||u||_{V^{m}}||\nabla^{m+1} v||_{L^2}^2+||f||_{V^{m-1}}||\nabla^{m+1} v||_{L^2},
	\end{equation*}
	and applying Young's inequality to the last right-hand term,
	\begin{equation*}
	\frac{d||\nabla^m v||_{L^2}^2}{dt}+C||\nabla^{m+1} v||_{L^2}^2\leq C'||u||_{V^{m}}||\nabla^{m+1} v||_{L^2}^2+C'||f||_{V^{m-1}}^2.
	\end{equation*}
	Now, Gronwall's lemma allows us to conclude that
	for  $u\in U_{m-1}$, the solution $v\in C([0,T],V^m)\cap L^2([0,T],V^{m+1})$ exists and that  the norm of the inversion 
	depends on $||u||_{L^2([0,T],V^{m})}$, thus on $||u||_{U_{m-1}}$ alone.
	Finally Corollary $\ref{cestb}$ allows us to conclude that $v\in U_m(T)$.
\end{proof}

We now may prove the existence and analyticity of operator $Sol$:

\begin{proof}[Proof of Theorem \ref{AnalSol}]
	The assertion follows from the real-analytic part of Theorem $\ref{invlocalunif}$ where $f=\mathcal{L}$ and $E=U_m(T')$, $F=V_m\times E_{m-1}(T')$, $Z=B_{V^m(M)}\times\mathcal{E}_{m-1}(T')$. Indeed, by Lemma $\ref{bilin}$ the mapping $\mathcal{L}$ is analytic and by Lemma $\ref{injecL}$ it is injective. By the last assertion of Theorem $\ref{Petcu}$, for any $(u_0,f)\in Z$ a solution $\mathcal{L}^{-1}(u_0,f)=:v$ exists, and $||u||_{U_m}\leq R(T',C^*,m)$. Let us choose $U=B_{U_m}(R+1)$. 
	
	Then \textbf{a)} in Theorem $\ref{invlocalunif}$ holds with $r=1$. Estimate \textbf{b)} follows from Corollary $\ref{cestb}$ (which is valid both for complex and real analytic functions $u(t,x)$), and estimate \textbf{c)} follows from the same Corollary. Finally \textbf{d)} is the assertion of Lemmas $\ref{invL}$, $\ref{invLbis}$. Now application of Theorem $\ref{invlocalunif}$ implies the existence and analyticity of the mapping $\eqref{eqSol}$. Its Lipschitz constant is bounded by
	\begin{equation*}
		\sup_{||u_0||_{V^m}\leq M,f\in\hat{E}_{m-1}(T',\rho)}||dSol(u_0,f)||_{V^m\times E_{m-1}\mapsto U_m}\leq L(M,T,m).
	\end{equation*}	
\end{proof}

\section{Study of the generated discrete-time process}
Now let us take for $f$ in $\eqref{init}$ a function, defined piecewise in time:
\begin{equation*}
f:=\sum_{k=0}^\infty \mathbbm{1}_{[k,k+1[}(t)\eta_k(t-k),
\end{equation*}
and such that
\begin{equation*}
f\in \mathcal{E}_{m}(\infty), \text{ for some }  m\geq 2
\end{equation*}
By Theorem $\ref{Petcu}$ and Corollary $\ref{corespace}$ the primitive equations $\eqref{init}$,
$\eqref{ci}$ have a unique a solution $v\in U_m(T)$ for all $T>0$ 
 for any $v_0\in V^m$.
We calculate the values of the solution $v(t)$ at integer moments of time,
\begin{equation*}
	v_k=v(k),\quad v_k\in V^m,
\end{equation*}
%
%
%
and define its transition operator as
\begin{equation}
\label{Systeme}
S:(v_n,\eta_{n})\mapsto v_{n+1}, \quad S:V_m\times \mathcal{E}_{m-1}(1)\mapsto V_m.
\end{equation}
Let us now study the discrete time process obtained by iterating the applications of this operator.
\subsection{Invariant subset for $S$ and its analyticity}
%
%
%
Let us fix $m\geq 2$. Using the notation of Theorem $\ref{Absorbe}$ we choose $M=2R\geq2$ and find the corresponding $T(M)\geq 1$ and $K=K(M)\geq 2R$.
 Let us apply the last assertion of Theorem $\ref{AnalSol}$. 
We may set some $\epsilon(M,2T)$ such that for $||u_0||_{V^m}\leq 2R$, and for any $f'\in \hat{E}_{m-1}(2T,\epsilon(M,2T)) $, so that 
\begin{equation*}
	||f-f'||_{E_{m-1}(T')}\leq \epsilon(M,2T), \quad f\in\mathcal{E}_{m-1}(2T),
\end{equation*}  
we have that 
\begin{equation*}
	||Sol(u_0,f')(t)-Sol(u_0,f)(t)||_{V^m}\leq \epsilon(M,2T)L,\quad  1\leq t\leq 2T.
\end{equation*}
Let us fix $\epsilon(M,2T)$ so small $\epsilon L\leq 1$. Then by virtue of Theorem $\ref{Absorbe}$, for any $f'\in\hat{E}_{m-1}(2T,\epsilon(M,2T)) $ we have
\begin{equation}
\label{prox}
\begin{cases}
||Sol(u_0, f')(t)||_{V^m}\leq K+1,& 1\leq t\leq T,
\\
||Sol(u_0, f')(t)||_{V^m}\leq R+1,&  T\leq t\leq 2T.
\end{cases}
\end{equation}
From now on we set $\epsilon=\epsilon(R,M,2T)$, $\hat{E}_{m-1}(2T)=\hat{E}_{m-1}(2T,\epsilon)$ and we define
\begin{equation*}
\mathcal{O}_m:=\{v,\exists 0\leq t\leq 2T, v_0\in B_{V^m}(2R), \eta\in\hat{E}_{m-1}(2T), v=Sol(v_0,\eta)(t)\}.
\end{equation*}
Due to $\eqref{prox}$, 
\begin{equation}
	\label{inclusions}
	B_{V^m}(2R)\subset \mathcal{O}_m\subset B_{V^m}(K+1).
\end{equation}
 Moreover we have the following result:
 \begin{lemma}
 	The set $\mathcal{O}_m$ is open in $V^m$
 \end{lemma}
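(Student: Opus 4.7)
Take $v\in\mathcal{O}_m$ and fix a representation $v = Sol(v_0,\eta)(t_0)$ with $v_0\in B_{V^m}(2R)$, $\eta\in\hat{E}_{m-1}(2T)$ and $t_0\in[0,2T]$. If $t_0=0$ then $v=v_0$ lies in the open ball $B_{V^m}(2R)\subset\mathcal{O}_m$ by $\eqref{inclusions}$, which itself is a $V^m$-neighbourhood of $v$ contained in $\mathcal{O}_m$. So from now on I assume $t_0>0$ and denote $u:=Sol(v_0,\eta)\in U_m(2T)$.

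The plan is that, given a small perturbation $v'\in V^m$ of $v$, I shall construct an auxiliary trajectory $\tilde v\in U_m(2T)$ with $\tilde v(0)=v_0$, $\tilde v(t_0)=v'$ and $\|\tilde v - u\|_{U_m(2T)}\le C\|v'-v\|_{V^m}$, and then read off the associated force $\tilde\eta := \mathcal{L}^2(\tilde v) = \partial_t\tilde v - \Delta\tilde v + B(\tilde v,\tilde v)$. By Lemma $\ref{bilin}$ together with Corollary $\ref{cestb}$ the operator $\mathcal{L}^2:U_m(2T)\to E_{m-1}(2T)$ is analytic, hence locally Lipschitz, so
\begin{equation*}
    \|\tilde\eta-\eta\|_{E_{m-1}(2T)}\le C(\|u\|_{U_m})\|\tilde v - u\|_{U_m}\le C'\|v'-v\|_{V^m}.
\end{equation*}
Since $\hat{E}_{m-1}(2T)=\mathcal{E}_{m-1}(2T)+B_{E_{m-1}(2T)}(\epsilon)$ is open in $E_{m-1}(2T)$ and contains $\eta$, for $\|v'-v\|_{V^m}$ small enough $\tilde\eta$ stays in $\hat E_{m-1}(2T)$; by the uniqueness statement of Lemma $\ref{injecL}$ then $\tilde v = Sol(v_0,\tilde\eta)$, whence $v'=\tilde v(t_0)=Sol(v_0,\tilde\eta)(t_0)\in\mathcal{O}_m$, proving that $\mathcal{O}_m$ contains a $V^m$-ball around $v$.

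It remains to build $\tilde v$. Writing $w:=v'-v=\sum_n w_n e_n$ in the $L^2$-orthonormal basis $(e_n)$ of eigenfunctions of $-\Delta$ on $V$, with eigenvalues $\lambda_n>0$ (Lemma $\ref{Poinc}$), I set
\begin{equation*}
    \psi(t):=\sum_n w_n e^{-\lambda_n|t-t_0|}e_n.
\end{equation*}
The elementary bound $\int_0^{2T}e^{-2\lambda_n|t-t_0|}\,dt\le \lambda_n^{-1}$ gives $\|\psi\|_{E_{m+1}}^2\le C\|w\|_{V^m}^2$ and $\|\partial_t\psi\|_{E_{m-1}}^2\le C\|w\|_{V^m}^2$, so $\psi\in U_m(2T)$ and $\psi(t_0)=w$. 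Since $\psi(0)=e^{t_0\Delta}w$ is smooth, I fix a cut-off $\chi\in C^\infty([0,2T])$ with $\chi(0)=1$, $\chi(t_0)=0$ and set $\tilde v(t):=u(t)+\psi(t)-\chi(t)\psi(0)$; then $\tilde v\in U_m(2T)$ with $\tilde v(0)=v_0$, $\tilde v(t_0)=v'$ and $\|\tilde v-u\|_{U_m}\le C\|v'-v\|_{V^m}$.

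The only delicate step is the $U_m$-regularity of $\psi$: the weight $e^{-\lambda_n|t-t_0|}$ gains one spatial derivative in the $L^2$-in-time sense, which is exactly what is needed to reach a $V^m$ target at a single time through a $U_m$-trajectory of comparable size; equivalently this expresses the surjectivity, with bounded right inverse, of the evaluation $U_m(2T)\to V^m$ at $t=t_0$. Once this is in hand, the rest is Lipschitz bookkeeping against the open set $\hat E_{m-1}(2T)$.
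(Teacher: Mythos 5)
Your proof is correct, but it takes a genuinely different route from the paper. The paper's argument is ``soft'': it observes that $W_m:=\mathcal{L}^{-1}\bigl(B_{V^m}(2R)\times\hat{E}_{m-1}(2T)\bigr)$ is open in $U_m(2T)$ by continuity of $\mathcal{L}$, and then pushes $W_m$ forward through the evaluation maps $\mathcal{I}_\tau:U_m(2T)\to V^m$, which are continuous linear surjections and hence open maps by the open mapping theorem, so that $\mathcal{O}_m=\bigcup_\tau\mathcal{I}_\tau W_m$ is open. You run the same data in the opposite direction: instead of pushing an open set forward through $\mathcal{I}_{t_0}$, you lift a perturbation $v'$ of the endpoint back to a perturbed trajectory $\tilde v$ via an explicit bounded right inverse of the trace map at $t_0$ (the weight $e^{-\lambda_n|t-t_0|}$, whose $L^2$-in-time decay buys exactly the one derivative needed to land in $E_{m+1}$ with $\partial_t$ in $E_{m-1}$), and then recover the perturbed control as $\tilde\eta=\mathcal{L}^2(\tilde v)$, using the local Lipschitz continuity of $\mathcal{L}^2$ (Corollary \ref{cestb}) and the openness of $\hat E_{m-1}(2T)$, with Lemma \ref{injecL} guaranteeing $\tilde v=Sol(v_0,\tilde\eta)$. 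Your construction is in effect a hands-on proof of the very fact (surjectivity and openness of $\mathcal{I}_{t_0}$) that the paper obtains abstractly; what it buys is a quantitative radius for the $V^m$-ball around $v$ inside $\mathcal{O}_m$, in terms of $t_0$, $\|u\|_{U_m}$ and the distance from $\eta$ to the complement of $\hat E_{m-1}(2T)$ (your constant degenerates as $t_0\to 0$, but you correctly dispose of that case separately via $\eqref{inclusions}$). The paper's version is shorter; yours is self-contained and constructive. Both are valid.
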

\begin{proof}
	The set
	\begin{equation*}
	W_m:=\{v\in U_{m}(2T),v =Sol(v_0,\eta), v_0\in B_{V^m}(2R), \eta\in  \hat{E}_{m-1}(2T)\}
	\end{equation*}
	is open in $U_{m}(2T)$ as the inverse image by $\mathcal{L}$ of $B_{V^m}(2R)\times \hat{E}_{m-1}(2T)$ which is an open subset of $V^m\times E_m(2T)$.
	We now study the following mappings:
	\begin{equation}
	\mathcal{I}_\tau:U_{m}(2T)\rightarrow V^m,\quad v\mapsto v(\tau).
	\end{equation}
	Due to Theorem 3.2 of the first chapter in \cite{BOK7} these mappings are continuous surjective.
	Therefore, by virtue of the open mapping theorem applied to the Banach spaces $U_m(2T)$, the sets
	 $\mathcal{I}_{\tau}W_m\subset V^m$ are open for any $\tau\in[0,2T]$ and $\mathcal{O}_m=\cup_{\tau\in[0,2T]} \mathcal{I}_\tau W_m$
	is open as well.
\end{proof}
Moreover
\begin{theorem}
	\label{StabAnaS}
	The set $\mathcal{O}_m$ is invariant for $S$ when the external force belongs to $\mathcal{E}_{m-1}(1)$. In other words,
	\begin{equation*}
	S(\mathcal{O}_m\times\mathcal{E}_{m-1}(1))\subset\mathcal{O}_{m}.
	\end{equation*}
	Moreover, the mapping $S$ is analytical from $\mathcal{O}_m\times\hat{E}_{m-1}(1)$ into $V^m$.
\end{theorem}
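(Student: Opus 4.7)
My plan is to handle the two claims separately: first the set-invariance $S(\mathcal{O}_m\times\mathcal{E}_{m-1}(1))\subset \mathcal{O}_m$, which is essentially a concatenation argument based on uniqueness, and then the analyticity of $S$, which follows directly from Theorem~$\ref{AnalSol}$ composed with time evaluation.

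For invariance, I would take an arbitrary $v\in\mathcal{O}_m$, written by definition as $v = Sol(v_0,\eta')(t)$ with $v_0\in B_{V^m}(2R)$, $\eta'\in\hat{E}_{m-1}(2T)$, $t\in[0,2T]$, pick any $\eta\in\mathcal{E}_{m-1}(1)$, and show that $w:=S(v,\eta)=Sol(v,\eta)(1)$ admits a similar representation. I would split on whether $t+1\leq 2T$ or not. In the first case, decompose $\eta'=\eta'_g+\eta'_h$ with $\eta'_g\in\mathcal{E}_{m-1}(2T)$ and $\|\eta'_h\|_{E_{m-1}(2T)}<\epsilon$, and build $\eta''\in\hat{E}_{m-1}(2T)$ equal to $\eta'$ on $[0,t]$, equal to $\eta(\cdot-t)$ on $[t,t+1]$, and completed on $[t+1,2T]$ by taking $\eta'_g$ for the $L^\infty$ part and zero for the $L^2$ part, so that the decomposition still witnesses membership in $\hat{E}_{m-1}(2T)$. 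Uniqueness (Lemma~$\ref{injecL}$) together with the semigroup property then gives $Sol(v_0,\eta'')(t)=v$ and $Sol(v_0,\eta'')(t+1)=w$, so $w\in\mathcal{O}_m$ since $t+1\in[1,2T]$. In the second case, $t+1>2T$ combined with $T\geq 1$ forces $t\geq T$, so $\eqref{prox}$ gives $\|v\|_{V^m}\leq R+1\leq 2R$, hence $v\in B_{V^m}(2R)$ itself. Then $w = Sol(v,\eta'')(1)$ for any $\eta''\in\hat{E}_{m-1}(2T)$ extending $\eta$ from $[0,1]$ to $[0,2T]$ (e.g.\ trivial extension by zero), which again puts $w$ in $\mathcal{O}_m$ with $t''=1$.

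For the analyticity, I would invoke Theorem~$\ref{AnalSol}$ with parameters $M:=K+1$, $T:=1$ and the given $m$, producing a radius $\rho(K+1,1,m)>0$ such that $Sol\colon B_{V^m}(K+1)\times\hat{E}_{m-1}(1,\rho)\to U_m(1)$ is analytic. Using $\eqref{inclusions}$ we have $\mathcal{O}_m\subset B_{V^m}(K+1)$, and we may assume (by shrinking $\epsilon$ once and for all at the beginning of this section, which only strengthens $\eqref{prox}$) that $\epsilon\leq\rho$, so that $\hat{E}_{m-1}(1)\subset\hat{E}_{m-1}(1,\rho)$. Restriction yields analyticity of $Sol$ on $\mathcal{O}_m\times\hat{E}_{m-1}(1)$. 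Finally, since $U_m(1)\hookrightarrow C([0,1],V^m)$, the time-$1$ evaluation $v\mapsto v(1)$ is continuous linear, hence analytic, and the composition $S=\mathcal{I}_1\circ Sol$ is analytic.

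The only non-routine step is the force-concatenation in Case~A of the invariance argument: one must keep $\eta''$ inside the affine set $\mathcal{E}_{m-1}(2T)+B_{E_{m-1}(2T)}(\epsilon)$ rather than just in $L^\infty_t V^{m-1}$, which forces the preliminary split of $\eta'$ into its $L^\infty$ and $L^2$-perturbation components. Everything else reduces to the absorbing property, uniqueness, and the previously established analyticity of $Sol$.
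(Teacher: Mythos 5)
Your proposal is correct and follows essentially the same route as the paper: the invariance is proved by the same force-concatenation argument (with a dichotomy on the time parameter that invokes the absorbing bound $\eqref{prox}$ in the case where the concatenated time would exceed $2T$, exactly as the paper does for $t_0\geq T$), and the analyticity is reduced to Theorem~$\ref{AnalSol}$ composed with the continuous linear time-$1$ evaluation. Your version is in fact slightly more careful than the paper's on two points the paper glosses over — taking $M=K+1$ rather than $2R$ so that all of $\mathcal{O}_m$ is covered, and arranging $\epsilon\leq\rho$ so that $\hat{E}_{m-1}(1)\subset\hat{E}_{m-1}(1,\rho)$ — but these are refinements of the same argument, not a different one.
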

\begin{proof}
	The analycity of $S$ follows from that of the mapping $Sol:B_{V^m}(2R)\times\hat{E}_{m-1}(1)\rightarrow U_{m}(1)$ (cf Theorem $\ref{AnalSol}$).
	
	
	Now let us deal with the stability of $\mathcal{O}_m$
	We consider any $\mathcal{o}\in\mathcal{O}_m$ . Then there exist $ t_0\geq 0$, $\mathcal{o}_0\in B_{V^m}(R)$ and $\eta_0\in \hat{E}_{m-1}(t_0)$ such that
	\begin{equation*}
	\mathcal{o}=Sol(\mathcal{o}_0,\eta_0)(t_0).
	\end{equation*}
	Let $z_0\in \mathcal{E}_{m-1}(t_0)$ be an approximation of $\eta_0$ such that
	\begin{equation*}
		||z_0-\eta_0||_{E_{m-1}(1)}<\epsilon.
	\end{equation*}
	Let us now consider any  $z_1\in \mathcal{E}_{m-1}(1)$ and
	undertake a dichotomy depending on the definition of $\mathcal{o}$. 
	If $t_0\geq T$ then by virtue of $\eqref{prox}$, $||u(t)||_{V^m}\leq R+1\leq 2R=M$, thus $S(\mathcal{o},z_1)\in \mathcal{O}_m$ by definition of $\mathcal{O}_m$.
	
	If, now, $0\leq t_0\leq T$, we shall write $S(\mathcal{o},z_1)$ under the exact form required in the definition of $\mathcal{O}_m$; to this end let us define 
	\[
	z_2(t)=
	\begin{cases}
	z_0(t),&0\leq t\leq t_0,
	\\
	z_1(t-t_0),& t_0\leq t\leq t_0+1,
	\\
	z_1(1)& t\geq t_0+1,
	\end{cases}
	\]
	and
	\[
	\eta_2(t)=
	\begin{cases}
	\eta_0(t),&0\leq t\leq t_0,
	\\
	z_1(t-t_0),& t_0\leq t\leq t_0+1,
	\\
	z_1(1)& t\geq t_0+1.
	\end{cases}
	\]
	Then $\eta_2\in\hat{E}_{m-1}(2T)$ is the control applied to the trajectory starting from $\mathcal{o}_0$, going through $\mathcal{o}$,  defined up to $t_0+1$, and extended up to $2T$ by $z_1(1)$.
	
	Clearly
	\begin{equation*}
	||\eta_2-z_2||_{E_{m-1}(2T)}=||\eta_0-z_0||_{E_{m-1}(2T)}\leq \epsilon.
	\end{equation*}
	Now since,
	\begin{equation*}
		S(\mathcal{o},z_1)=Sol(\mathcal{o}_0,\eta_2)(t_0+1), \quad \mathcal{o}_0\in B_{V^m}(2R), \eta_2\in \hat{E}_{m-1}(t_0+1),
	\end{equation*}
	we get $S(\mathcal{o},z_1)\in \mathcal{O}_m$ and the proposition is proved.
\end{proof}
A consequence of this is, that for $m\geq 2$, we may reduce the study of the long-time behaviour of trajectories of the system $\eqref{Systeme}$ to the behaviour of the trajectories on the invariant sets $\mathcal{O}_m$ .
\section{Regularization property of the mapping $S$}
By virtue of Theorem $\ref{AnalSol}$, for $m\geq 2$, the operator $S$ is analytical from $V^m\times \hat{E}_{m-1}(1)$ with values in $V^m$. Thanks to the following proposition we may raise the regularity of the image set by increasing the smoothness of $f$ in $\eqref{initlin}$.
\begin{proposition}
	\label{Reg}
	For all $m\geq 2$, operator $S$ is analytical from $B_{V^m}(2R)\times \hat{E}_{m}(1)$ to $V^{m+1}$
\end{proposition}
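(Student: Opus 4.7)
The plan is two-step: first, establish a parabolic smoothing \emph{a priori} estimate showing $S(v_0, f) = v(1) \in V^{m+1}$ with norm controlled by the data; second, upgrade this local $V^{m+1}$-boundedness to analyticity using the already-established analyticity into the weaker space $V^m$ (Theorem \ref{AnalSol}) via complexification and Cauchy's integral formula.

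For the smoothing estimate, fix $(v_0, f) \in B_{V^m}(2R) \times \hat{E}_m(1)$ and let $v = Sol(v_0, f)$. By Theorem \ref{AnalSol} at level $m$, $v \in U_m(1)$ with $\|v\|_{U_m(1)} \leq C_0 = C_0(R, m, \|f\|_{E_m(1)})$. I would take the $L^2$-scalar product of \eqref{proj} with $t(-\Delta)^{m+1} v$. Bounding the nonlinear contribution by $|\langle B(v,v), (-\Delta)^{m+1} v\rangle| \leq \|B(v,v)\|_{V^m}\|v\|_{V^{m+2}} \leq C\|v\|_{V^{m+1}}^2\|v\|_{V^{m+2}}$ via Lemma \ref{estbhaut}, the forcing term by $\|f\|_{V^m}\|v\|_{V^{m+2}}$, applying Cauchy-Schwartz and Young (absorbing $\|v\|_{V^{m+2}}^2$ terms on the left), and then using $t\frac{d}{dt}\|v\|_{V^{m+1}}^2 = \frac{d}{dt}(t\|v\|_{V^{m+1}}^2) - \|v\|_{V^{m+1}}^2$, I obtain
\begin{equation*}
\frac{d}{dt}\bigl(t\|v\|_{V^{m+1}}^2\bigr) \leq \|v\|_{V^{m+1}}^2 + Ct\|f\|_{V^m}^2 + C\|v\|_{V^{m+1}}^2 \cdot \bigl(t\|v\|_{V^{m+1}}^2\bigr).
\end{equation*}
Since $\int_0^1 \|v\|_{V^{m+1}}^2\,dt \leq C_0^2$ and $t\|v(t)\|_{V^{m+1}}^2 \to 0$ as $t\to 0$, Gronwall's lemma applied to $\psi(t) := t\|v(t)\|_{V^{m+1}}^2$ yields $\|v(1)\|_{V^{m+1}}^2 \leq \exp(CC_0^2)\bigl(C_0^2 + C\|f\|_{E_m(1)}^2\bigr)$, giving a locally uniform $V^{m+1}$-bound on $S$.

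For the analyticity upgrade, note that $B$ is bilinear, hence polynomial, and extends holomorphically to the complexifications $V^j_\mathbb{C}$, with the estimates of Lemma \ref{estbhaut} carrying verbatim. By Theorem \ref{AnalSol}, $Sol$ extends to a holomorphic map $Sol_\mathbb{C}$ from a complex neighbourhood $\mathcal{U}$ of the real data set in $V^m_\mathbb{C} \times \hat{E}_m(1)_\mathbb{C}$ with values in $U_m(1)_\mathbb{C}$. Rerunning the weighted energy computation on $\mathcal{U}$ produces a uniform bound $\|Sol_\mathbb{C}(v_0, f)(1)\|_{V^{m+1}_\mathbb{C}} \leq C''$. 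For each real base point $(v_0^0, f^0)$ and direction $h$, Cauchy's integral formula
\begin{equation*}
D^n S(v_0^0, f^0)(h)^n = \frac{n!}{2\pi i}\oint_{|z|=r}\frac{Sol_\mathbb{C}\bigl((v_0^0, f^0)+zh\bigr)(1)}{z^{n+1}}\,dz
\end{equation*}
defines a Bochner integral in $V^m_\mathbb{C}$ whose value must lie in $V^{m+1}_\mathbb{C}$, since the closed balls of the Hilbert space $V^{m+1}_\mathbb{C}$ are closed in $V^m_\mathbb{C}$ (the weak closure of a bounded set containing the Riemann sums) and $\|Sol_\mathbb{C}\|_{V^{m+1}_\mathbb{C}} \leq C''$ on $|z|=r$. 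This gives $\|D^n S(v_0^0, f^0)\| \leq n!\,C''/r^n$ into $V^{m+1}$, so the Taylor series of $S$ converges in $V^{m+1}$-norm on a neighbourhood, proving analyticity.

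The main obstacle is the smoothing estimate: the nonlinear contribution $\|B(v,v)\|_{V^m}^2 \leq C\|v\|_{V^{m+1}}^4$ is only known to be in $L^1([0,1])$, not $L^2$, so a direct Duhamel argument through the heat semigroup (which would want $B(v,v)\in L^2([0,1], V^m)$ to gain one derivative at $t=1$) fails. The remedy is the vanishing weight $t$, which simultaneously handles the gap $v_0 \in V^m \setminus V^{m+1}$ and converts the problematic $L^1$-quantity into the Gronwall-tractable product $\|v\|_{V^{m+1}}^2 \cdot (t\|v\|_{V^{m+1}}^2)$. Once this is in place, the complexification step is essentially structural, relying only on the polynomial nature of $B$ and the Hilbert closedness of $V^{m+1}$-balls in $V^m$.
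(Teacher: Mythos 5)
Your proof is correct in substance, but it follows a genuinely different route from the paper's. The paper exploits $S(0,0)=0$ to write $S(u_0,\eta)=\int_0^1 D_{u_0}S(\gamma u_0,\gamma\eta)u_0\,d\gamma+\int_0^1 D_{\eta}S(\gamma u_0,\gamma\eta)\eta\,d\gamma$ and observes that each integrand is the time-$1$ value of a solution of the linearised equation $d\mathcal{L}(u_\gamma)v=(\cdot,\cdot)$; the gain of one derivative and the analytic dependence then come entirely from the linear theory (Lemmas \ref{invL}, \ref{invLbis} and \ref{fimplicites}, i.e.\ the implicit function theorem), so analyticity into $V^{m+1}$ is obtained structurally as a composition of analytic maps. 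Amusingly, both arguments hinge on the same vanishing-weight trick: where you multiply the nonlinear equation by $t(-\Delta)^{m+1}v$, the paper sets $\hat v_\gamma=t\tilde v_\gamma$ at the linearised level to convert the rough initial datum $u_0\in V^m$ into a source term $\tilde v_\gamma\in E_m$. What your route buys is an explicit quantitative smoothing bound on $S$ itself and independence from Lemma \ref{fimplicites}; what it costs is the second half of your argument, which the paper avoids entirely: you must know that the complex extension $Sol_{\mathbb{C}}$ exists, solves the complexified equation, and is uniformly bounded in $U_m(1)_{\mathbb{C}}$ on a complex neighbourhood (this does follow from the proof of Theorem \ref{AnalSol} via Theorem \ref{invlocalunif}, but you should say so), rerun the weighted estimate there, and then invoke the Cauchy formula together with the weak closedness of $V^{m+1}$-balls in $V^m$. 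Two small points to tighten: the initial condition $t\|v(t)\|_{V^{m+1}}^2\to 0$ holds a priori only along a subsequence $t_n\to 0$ (which suffices for Gronwall, since $\|v\|_{V^{m+1}}^2\in L^1$), and the weighted estimate itself should be performed at the Galerkin level since $v\in L^2([0,1],V^{m+2})$ is not known beforehand.
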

To prove this proposition, we first state the following result
\begin{lemma}
	\label{fimplicites}
	Let $j\geq 2$ be fixed and $u\in U_j$, $h\in E_j$ et $v_0\in V^{j+1}$ be given.
	Then, the solution $v$ of 
	\begin{equation*}
	d\mathcal{L}(u)v=(v_0,h)
	\end{equation*}
	satisfies $v\in U_{j+1}$ and analytically depends on $u\in U_j$, $v_0\in V^{j+1}$ and $h\in E_j$.
\end{lemma}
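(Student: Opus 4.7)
The plan is to apply Lemma \ref{invLbis} directly for existence and then bootstrap it to analytic dependence via the general fact that operator inversion is analytic on the open set of invertible bounded linear maps between Banach spaces.

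\smallskip

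\textbf{Step 1 (existence and uniqueness of $v\in U_{j+1}$).} Since $j\geq 2$, set $m=j+1\geq 3$. The hypothesis $u\in U_j=U_{m-1}$ is exactly what Lemma \ref{invLbis} requires, and the data $(v_0,h)\in V^{j+1}\times E_j=V^m\times E_{m-1}$. Thus Lemma \ref{invLbis} furnishes a unique $v\in U_m=U_{j+1}$ solving $d\mathcal{L}(u)v=(v_0,h)$, with $\|v\|_{U_{j+1}}$ bounded by a constant depending only on $\|u\|_{U_j}$.

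\smallskip

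\textbf{Step 2 (rephrasing as a parametrised linear inversion).} Introduce
\begin{equation*}
A:U_j\longrightarrow \mathcal{L}\bigl(U_{j+1},\,V^{j+1}\times E_j\bigr),\qquad A(u)w:=\bigl(w(0),\,\partial_t w-\Delta w+B(u,w)+B(w,u)\bigr).
\end{equation*}
The contribution $w\mapsto(w(0),\partial_t w-\Delta w)$ is a fixed bounded linear operator independent of $u$, and Corollary \ref{cestb} (applied with index $m=j$) shows that $(u,w)\mapsto B(u,w)+B(w,u)$ is bounded bilinear from $U_j\times U_{j+1}$ into $E_j$. Consequently $A$ is affine with continuous linear part, hence real-analytic from $U_j$ into $\mathcal{L}(U_{j+1},V^{j+1}\times E_j)$.

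\smallskip

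\textbf{Step 3 (analyticity of the inverse).} By Step 1, $A(u)\in\mathrm{Isom}(U_{j+1},V^{j+1}\times E_j)$ for every $u\in U_j$. The set of invertible bounded linear operators between two Banach spaces is open and the map $T\mapsto T^{-1}$ is real-analytic on it (given by a locally convergent Neumann series). Composing this with the analytic map $A$, the assignment $u\mapsto A(u)^{-1}$ is analytic from $U_j$ to $\mathcal{L}(V^{j+1}\times E_j,U_{j+1})$.

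\smallskip

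\textbf{Step 4 (assembling the solution map).} The evaluation $(T,(v_0,h))\mapsto T(v_0,h)$ is a bounded bilinear map, and hence analytic. Consequently
\begin{equation*}
(u,v_0,h)\longmapsto v:=A(u)^{-1}(v_0,h)
\end{equation*}
is analytic from $U_j\times V^{j+1}\times E_j$ into $U_{j+1}$, which is the claimed dependence. The only subtle point, which is really the content of the lemma, is Step 1; once the isomorphism property of $d\mathcal{L}(u)$ with the sharp $(U_j\leadsto U_{j+1})$ gain is in hand from Lemma \ref{invLbis}, the analytic dependence is a soft consequence of the analyticity of operator inversion on an open set of isomorphisms.
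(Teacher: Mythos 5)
Your proof is correct, and it rests on the same two pillars as the paper's: Lemma \ref{invLbis} supplies the isomorphism $d\mathcal{L}(u):U_{j+1}\to V^{j+1}\times E_j$ for $u\in U_j$, and the boundedness of the bilinear term (Corollary \ref{cestb}) supplies the regular dependence on $u$. Where you diverge is in the soft step: the paper applies the implicit function theorem to $f(u,v):=d\mathcal{L}(u)v$, using Lemma \ref{bilin} for the analyticity of $f$ and Lemmas \ref{invL}, \ref{invLbis} for the invertibility of $d_vf$, and then concludes local (hence everywhere) analyticity of the solution map. You instead exploit the linearity of the problem in $v$: writing $v=A(u)^{-1}(v_0,h)$ with $A$ affine in $u$, you only need the analyticity of $T\mapsto T^{-1}$ on the open set of isomorphisms (Neumann series). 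This is slightly more elementary and gives the global analyticity statement directly, without passing through a local theorem; the paper's IFT route is more general in that it would survive a nonlinearity in $v$, but that generality is not needed here. Both arguments are valid, and the only substantive content in either case is Step 1, exactly as you observe.
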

\begin{proof}
	By virtue of Lemmas $\ref{invL}$ and $\ref{invLbis}$, $v\in U_{j+1}$ exists and is unique for all $u\in U_j$, $h\in E_j$ and $v_0\in V^{j+1}$. To prove its analyticity as a function of those parameters, we apply the implicit function theorem. 
	In our case, we set
	\begin{equation*}
	f(u,v):=d\mathcal{L}(u)v.
	\end{equation*}
	Thanks to Lemma $\ref{bilin}$, $f$ is analytical as a mapping
	\begin{equation*}
	U_j\times U_{j+1}\mapsto V^{j+1}\times E_j.
	\end{equation*}
	Moreover, for any $(u,v)$, its differential $d_vf(u,v)=d\mathcal{L}(u):U_j\times U_{j+1}\mapsto V^{j+1}\times E_j$ is invertible between those spaces as shown by Lemmas $\ref{invL}$ and $\ref{invLbis}$.
	Thus, by virtue of the implicit mapping theorem, for all $\bar{u}\in U_j$ and $(\bar{v_0},\bar{h})\in V^{j+1}\times E_j$, there exists a 
	\begin{equation*}
	\phi: U_{j}\times V^{j+1}\times E_j\supset A\rightarrow U_{j+1},
	\end{equation*}
	defined and analytic on a neighbourhood $A$ of $(\bar{u},\bar{v_0},\bar{h})$ such that 
	\begin{equation*}
	f(u,\phi(u,v_0,h))=(h,v_0), \quad \forall (u,v_0,h)\in A.
	\end{equation*}
	Therefore $v=\phi(u,v_0,h)$ is an analytical function everywhere on $U_{j}\times V^{j+1}\times E_j$.
\end{proof}

\begin{proof}[Proof of Proposition $\ref{Reg}$]
	To prove this we notice that as $S(0,0)=0$, then
	\begin{equation*}
	S(u_0,\eta)=\int_0^1\frac{d}{d\gamma}S(\gamma u_0,\gamma\eta)d\gamma=\int_0^1D_{u_0}S(\gamma u_0,\gamma\eta)u_0d\gamma+\int_0^1D_{\eta}S(\gamma u_0,\gamma\eta)\eta d\gamma,
	\end{equation*}
	and set $u_\gamma:=S(\gamma u_0,\gamma\eta)$.
	Since
	\begin{equation*}
		Sol:B_{V^m}(2R)\times \hat{E}_{m-1}(1)\rightarrow U_m(1),
	\end{equation*}
	is an analytical mapping, then
	  $u_\gamma$ analytically depends on $(\gamma u_0,\gamma \eta)$ for $(u_0,\eta)\in B_{V^m}(2R)\times \hat{E}_{m}(1)$ and $0\leq \gamma\leq 1$  (we use that, $\hat{E}_{m}(1)$ is star-shaped).

Now let us consider  $\int_0^1D_{\eta}S(\gamma u_0,\gamma\eta)\eta d\gamma$ and notice that $D_{\eta}S(\gamma u_0,\gamma \eta)\eta$ is the value at time $t=1$ of a solution $v_\gamma$ of
\begin{equation*}
\frac{dv_\gamma}{dt}-\Delta v_\gamma+ B(u_\gamma,v_\gamma)+B(v_\gamma,u_\gamma)=\eta,\quad v_\gamma(0)=0.
\end{equation*}
In other words, it satisfies the equation $d\mathcal{L}(u_\gamma) v_\gamma=(\eta ,0)$.
As $\eta\in E_m(1)$ and $u_\gamma\in U_m(1)$ we get, applying Lemma $\ref{fimplicites}$, that $v_\gamma\in U_{m+1}(1)$ analytically depends on $u_\gamma$ and $\eta$, and therefore on $u_0\in B_{V^m}(2R)$ and $\eta\in\hat{E}_{m}(1)$.


Thus $D_{\eta}S(\gamma u_0,\gamma\eta)\eta$ is analytic with values in $V^{m+1}$ and by integration the mapping $\int_0^1D_{\eta}S(\gamma u_0,\gamma\eta)\eta d\gamma$ is analytic from $B_{V^m}(2R)\times \hat{E}_{m}(1)$ to $V^{m+1}$. 

To study the integral $\int_0^1D_{u_0}S(\gamma u_0,\gamma\eta)u_0d\gamma$ we note that $D_{u_0}S(\gamma u_0,\gamma \eta)u_0$ is the value at $t=1$ of the solution $\tilde{v}_\gamma$ of
\begin{equation*}
\frac{d\tilde{v}_\gamma}{dt}-\Delta \tilde{v}_\gamma+ B(u_\gamma,\tilde{v}_\gamma)+B(\tilde{v}_\gamma,u_\gamma)=0,\quad \tilde{v}_\gamma(0)=u_0.
\end{equation*}
In other words, $\tilde{v}_\gamma$ verifies $d\mathcal{L}(u_\gamma) \tilde{v}_\gamma=(0,u_0)$.
As $u_0\in V^m$, applying Lemma $\ref{fimplicites}$ with $j=m-1$, we get that $\tilde{v}_\gamma\in U_m(1)$ is an analytical function of $u_0\in V^m$ and $u_\gamma\in U_m(1)$. Now considering $\hat{v}_\gamma=t\tilde{v}_\gamma$ we see $\hat{v}_\gamma$ satisfies 
\begin{equation*}
\frac{d\hat{v}_\gamma}{dt}-\Delta \hat{v}_\gamma+ B(u_\gamma,\hat{v}_\gamma)+B(\hat{v}_\gamma,u_\gamma)=\tilde{v} d\gamma,\quad \hat{v}_\gamma(0)=0.
\end{equation*}
Therefore,
\begin{equation*}
d\mathcal{L}(u_\gamma, \hat{v}_\gamma)=(0,\tilde{v}_\gamma).
\end{equation*}
Now, since $(0,\tilde{v}_\gamma)\in V^{m+1}\times E_m(1)$, then by virtue of Lemma $\ref{fimplicites}$ we have that $\hat{v}_\gamma\in U_{m+1}$ is an analytical function of $(u_\gamma,\tilde{v}_\gamma)$. Besides, as $\hat{v}_\gamma(1)=\tilde{v}_\gamma(1)$, then $\int_0^1D_{u_0}S(\gamma u_0,\gamma\eta)u_0d\gamma$ is analytical from $B_{V^m}(2R)\times \hat{E}_{m}(1)$ to $V^{m+1}$.
\end{proof}
\subsection{Dissipativity of the system}
First let us prove the following result:
\begin{lemma}
	\label{augmnorm}
	Let $m\geq 2$, 
	then there exists a $\mathcal{C}_m$ such that for all $u_0\in\mathcal{O}_m$
	\begin{equation}
	\label{contrac}
	||S(u_0,0)||_{V^m}\leq \mathcal{C}_m||u_0||_{L^2}.
	\end{equation}
\end{lemma}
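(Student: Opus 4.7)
The approach I would take is a parabolic smoothing bootstrap: starting from the $L^2$ energy identity, which provides the factor $\|u_0\|_{L^2}$, I climb one rung at a time up the Sobolev ladder, using the uniform a priori bound in $V^m$ (available from Theorem \ref{Petcu} and Corollary \ref{corespace} because $\mathcal{O}_m \subset B_{V^m}(K+1)$) to tame the nonlinearity. Concretely, writing $v(\cdot) = S(u_0, 0)(\cdot)$ for the solution of $\partial_t v - \Delta v + B(v, v) = 0$ with $v(0) = u_0$, there is a constant $M_0 = M_0(m, K)$ with
\begin{equation*}
    \sup_{t \in [0,1]} ||v(t)||_{V^m} + ||v||_{L^2([0,1], V^{m+1})} \leq M_0.
\end{equation*}
Testing the equation against $v$ in $L^2$ and invoking Lemma \ref{intpart} kills the nonlinearity, yielding $\tfrac{1}{2}\tfrac{d}{dt}||v||_{L^2}^2 + ||\nabla v||_{L^2}^2 = 0$, hence $\sup_{[0,1]}||v||_{L^2} \leq ||u_0||_{L^2}$ and $\int_0^1 ||\nabla v||_{L^2}^2\, dt \leq \tfrac{1}{2}||u_0||_{L^2}^2$.

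\textbf{Induction.} I would then prove by induction on $k = 0, 1, \ldots, m$ the existence of $\tau_k \in [0, k/(2m)]$ and a constant $C_k = C_k(m, K)$ such that
\begin{equation*}
    \sup_{t \in [\tau_k, 1]} ||v(t)||_{V^k}^2 + \int_{\tau_k}^1 ||v||_{V^{k+1}}^2\, dt \leq C_k^2\, ||u_0||_{L^2}^2.
\end{equation*}
The case $k=0$ is above. For the step $k \to k+1$, pigeonhole applied to the $L^2_tV^{k+1}$ bound on $[\tau_k, \tau_k + 1/(2m)]$ extracts $\tau_{k+1}$ in that interval with $||v(\tau_{k+1})||_{V^{k+1}}^2 \leq 2m\, C_k^2\, ||u_0||_{L^2}^2$. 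I would then test the equation against $(-\Delta)^{k+1} v$, obtaining
\begin{equation*}
    \frac{1}{2}\frac{d}{dt}||v||_{V^{k+1}}^2 + ||v||_{V^{k+2}}^2 = -\langle B(v, v), (-\Delta)^{k+1} v\rangle,
\end{equation*}
and bound the right-hand side by splitting $B(v, v)$ so that one factor of $v$ carries a norm uniformly controlled by $M_0$ (or with an $L^1$-in-time weight extracted from the $L^2_t V^{m+1}$ bound) and the other supplies $||v||_{V^{k+1}} ||v||_{V^{k+2}}$. Young's inequality absorbs the $||v||_{V^{k+2}}^2$ term into the left-hand side, leaving a Gronwall-type inequality $\tfrac{d}{dt}||v||_{V^{k+1}}^2 \leq a(t)\, ||v||_{V^{k+1}}^2$ with $\int_{\tau_{k+1}}^1 a(t)\, dt \leq C(m, K)$. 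Gronwall then propagates the initial $V^{k+1}$ control at $\tau_{k+1}$ to all of $[\tau_{k+1}, 1]$, and integrating the original differential inequality supplies the $L^2_t V^{k+2}$ bound needed for the next step.

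\textbf{Conclusion and main obstacle.} After $m$ iterations one has $\tau_m \leq 1/2 < 1$, so $||v(1)||_{V^m} \leq C_m\, ||u_0||_{L^2}$ as desired, with $\mathcal{C}_m := C_m$. The delicate point is verifying the nonlinear estimate at each rung: for $k \leq 1$ it follows directly from Corollary \ref{estbis}, using either $||v||_{V^3} \leq M_0$ when $m \geq 3$, or, when $m = 2$, the time-integrated bound $\int_0^1 ||v||_{V^3}^2\, dt \leq M_0^2$, which turns a pointwise control into an integrable weight $a(t)$ --- exactly what Gronwall tolerates. For $k \geq 2$ one has to expand $B(v, v)$ term-by-term via Lemma \ref{techb} and apply Moser-type commutator splittings, always placing the higher spatial derivatives on the factor whose norm is available either uniformly (via $V^m$) or in $L^2_t$ (via $V^{m+1}$). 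The critical case $m = 2$ is the hardest because in three dimensions $\nabla v$ is not in $L^\infty$ uniformly in $t$, so one must rely on the weaker but still sufficient $L^2$-in-time control of the top-order norms to make every Gronwall weight integrable.
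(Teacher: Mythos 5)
Your bootstrap is correct, but it follows a genuinely different route from the paper. On your side: the energy identity at the bottom rung, the pigeonhole selection of $\tau_{k+1}$, and the Gronwall step all close, because every weight $a(t)$ you need is either uniformly bounded by the absorbing-set constant ($a(t)=C\|v\|_{V^j}^2\leq CM_0^2$ for the rungs $j\leq m$ handled by Lemma \ref{estbhaut}) or integrable in time via the a priori $L^2([0,1],V^{m+1})$ bound ($a(t)=C\|v\|_{V^3}^2$ for the low rungs handled by Corollary \ref{estbis}, which is the critical case $m=2$ you correctly single out); note also that for $k+1\geq 3$ no Moser-type commutator analysis is actually needed, since Lemma \ref{estbhaut} already gives $\|B(v,v)\|_{V^{j-1}}\leq C\|v\|_{V^j}^2$ outright. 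The paper instead never touches the nonlinear energy estimates: it writes $S(u_0,0)=\int_0^1 D_uS(\gamma u_0,0)\,u_0\,d\gamma$ (using $S(0,0)=0$), reduces the claim to a bound on the linearized flow $\tilde v$ solving $d\mathcal{L}(Sol(\gamma u_0,0))\tilde v=(u_0,0)$, and gains regularity by observing that $\hat v(t):=t\tilde v(t)$ solves $d\mathcal{L}(\cdot)\hat v=(0,\tilde v)$ with zero initial datum, so Lemma \ref{invL} at the next rank $r$ lifts $\tilde v(t_1)$ one Sobolev level for any $t_1<1$; iterating this over $m$ subintervals of $[0,1]$ reaches $V^m$. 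The two arguments implement the same parabolic-smoothing idea --- your pigeonhole in time plays exactly the role of the paper's factor $t$, each trading a sliver of the time interval for one derivative --- but yours works directly on the nonlinear equation and is self-contained given the estimates on $b$, whereas the paper's recycles the isomorphism properties of $d\mathcal{L}$ already established for the analyticity of $Sol$ and so avoids redoing any higher-order estimate on the nonlinearity.
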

\begin{proof}
	We have
	\begin{equation*}
	S(u_0,0)=\int_0^1 D_uS(\gamma u_0,0)u_0d\gamma,
	\end{equation*}
	because $S(0,0)=0$.
	Therefore it suffices to show that $||D_{u}S(\gamma u_0,0)u_0||_{V^m}\leq \mathcal{C}||u_0||_{L^2}$, for $0\leq\gamma\leq 1$.
	Thus as  $u_0\in\mathcal{O}_m$, using $\eqref{inclusions}$ and applying Theorem $\ref{Petcu}$ we get that there is a constant $A(K)$ such that
	\begin{equation*}
	||Sol( \gamma u_0,0)||_{V^m}(t)\leq A(K),\quad\forall t\geq 0.
	\end{equation*}
	Moreover
	$D_{u}S(\gamma u_0,0)u_0$ is the value at time $t=1$ of a solution $\tilde{v}$ of
	\begin{equation*}
	d\mathcal{L}(Sol(\gamma u_0,0))\tilde{v}=(u_0,0).
	\end{equation*}
	Thus, by virtue of Lemma $\ref{invL}$ with $r=0$,
	\begin{equation*}
	||\tilde{v}(t)||_{U_0}\leq C(K) ||u_0||_{L^2}.
	\end{equation*}
	Then, reusing the notation $\hat{v}(t)=t\tilde{v}(t)$ we see that $\hat{v}(t)$ is a solution of
	\begin{equation}
	\label{linchapv}
	d\mathcal{L}(Sol(\gamma u_0,0))\hat{v}=(0,\tilde{v}).
	\end{equation}
	Whence, using Lemma $\ref{invL}$ for $r=1$ we get
	\begin{equation*}
	||\hat{v}(t)||_{U_1}\leq C'(K) ||u_0||_{L^2}.
	\end{equation*}
	More particularly, for a given $t_1<1$
	\begin{equation*}
	||\tilde{v}(t_1)||_{V^1}\leq C'(K) ||u_0||_{L^2}.
	\end{equation*}
	We shall repeat this argument between $t_1$ and $1$. Then $\tilde{v}(t)$ for $t\in[t_1,1] $ is a solution of an equation similar to
	\begin{equation*}
	d\mathcal{L}(Sol(\gamma u_0,0))\tilde{v}=(0,\tilde{v}(t_1)).
	\end{equation*}
	Arguing as above step we have that
	\begin{equation*}
		||D_{u}S(\gamma u_0,0)u_0||_{V^2}\leq \mathcal{C}(K)||u_0||_{L^2}.
	\end{equation*}
	Therefore,
	\begin{equation*}
		||S(u_0,0)||_{V^2}\leq \mathcal{C}_2(K)||u_0||_{L^2},
	\end{equation*}
	and more generally 
	\begin{equation*}
		||Sol(u_0,0)(t)||_{V^2}\leq C_2(K,t)||u_0||_{L^2},
	\end{equation*}
	for any $0<t<1$.
	Now, for any $m>2$ iterating the process on $m$ time intervals $([t_i,t_i+1])_{0\leq i\leq m+1}$ such that $t_0=0$ and $t_{m+1}=1$, we get the assertion of the lemma. 
\end{proof}

From this, we obtain the following

\begin{proposition}
	\label{dissip}
	For $m\geq 2$ consider a norm 
	\begin{equation*}
	||\cdot||'_{V^m}:=||\cdot||_{L^2}+\delta||\cdot||_{V^m},
	\end{equation*}
	 equivalent to $||\cdot||_{V^m}$. Then, there exist $\delta=\delta_m >0$, and $0<\gamma<1$  such that the system $\eqref{Systeme}$ abides by the following dissipativity rule:
	\begin{equation*}
	||S(u_0,0)||'_{V^m}\leq \gamma||u_0||_{L^2},\quad \gamma<1,\quad \forall u_0\in\mathcal{O}_m.
	\end{equation*}
	Thus,
	\begin{equation*}
		||S(u_0,0)||'_{V^m}\leq \gamma||u_0||'_{V^m},\quad \gamma<1,\quad \forall u_0\in\mathcal{O}_m.
	\end{equation*}
\end{proposition}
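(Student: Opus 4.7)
The plan is to combine the two building blocks at our disposal: the smoothing estimate of Lemma \ref{augmnorm}, which controls $\|S(u_0,0)\|_{V^m}$ by $\|u_0\|_{L^2}$, and a standard energy decay bound in $L^2$ for the unforced equation, which will yield a genuine contraction factor in the lowest norm. The introduction of the modified norm $\|\cdot\|'_{V^m}=\|\cdot\|_{L^2}+\delta\|\cdot\|_{V^m}$ and the choice of a small $\delta$ will then interpolate between these two estimates.

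First, I would derive a decay estimate of the form
\begin{equation*}
\|S(u_0,0)\|_{L^2}\leq \gamma_0\|u_0\|_{L^2},\qquad \gamma_0<1,
\end{equation*}
valid for every $u_0\in\mathcal{O}_m$. For this, let $v(t)=Sol(u_0,0)(t)$. Taking the $L^2$ scalar product of the projected equation $\partial_t v-\Delta v+B(v,v)=0$ with $v$ and using Lemma \ref{intpart} together with the fact that $B(v,v)=\mathfrak{P}b(v,v)$ while $v\in V$ (so that $\langle \mathfrak{P}b(v,v),v\rangle=\langle b(v,v),v\rangle=0$), one obtains
\begin{equation*}
\tfrac{d}{dt}\|v\|_{L^2}^{2}+2\|\nabla v\|_{L^2}^{2}=0.
\end{equation*}
The Poincaré inequality of Lemma \ref{Poinc} then gives $\tfrac{d}{dt}\|v\|_{L^2}^{2}\leq -2C^{-2}\|v\|_{L^2}^{2}$, and Gronwall provides $\|v(1)\|_{L^2}\leq e^{-C^{-2}}\|u_0\|_{L^2}$. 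I set $\gamma_0:=e^{-C^{-2}}\in(0,1)$.

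Next, I combine this with Lemma \ref{augmnorm}, which states that $\|S(u_0,0)\|_{V^m}\leq \mathcal{C}_m\|u_0\|_{L^2}$ for every $u_0\in\mathcal{O}_m$. Adding the two estimates yields
\begin{equation*}
\|S(u_0,0)\|'_{V^m}=\|S(u_0,0)\|_{L^2}+\delta\|S(u_0,0)\|_{V^m}\leq(\gamma_0+\delta\mathcal{C}_m)\|u_0\|_{L^2}.
\end{equation*}
Choosing $\delta=\delta_m>0$ so small that $\gamma:=\gamma_0+\delta_m\mathcal{C}_m<1$ proves the first claimed inequality. The second is immediate, since $\|u_0\|_{L^2}\leq\|u_0\|'_{V^m}$ by definition of the modified norm.

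I do not anticipate any serious obstacle, as both inputs are already at hand; the only small point requiring care is the verification that the unforced trajectory stays in $\mathcal{O}_m$ so that Lemma \ref{augmnorm} applies at time $1$ — this is granted by the invariance statement of Theorem \ref{StabAnaS} with $z_1\equiv 0$. The heart of the argument is the elementary observation that one may trade a tiny amount of higher regularity against the genuine $L^2$-contraction produced by the Laplacian, which is exactly what the weighted norm $\|\cdot\|'_{V^m}$ is designed to encode.
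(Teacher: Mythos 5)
Your argument is correct and is essentially identical to the paper's proof: both derive the $L^2$ contraction $\|S(u_0,0)\|_{L^2}\leq e^{-\kappa}\|u_0\|_{L^2}$ from the energy identity, Lemma \ref{intpart}, the Poincaré inequality and Gronwall, and then combine it with Lemma \ref{augmnorm} by choosing $\delta$ small enough that $e^{-\kappa}+\delta\mathcal{C}_m<1$. Your extra remarks (that $\langle \mathfrak{P}b(v,v),v\rangle=\langle b(v,v),v\rangle$ and that $\|u_0\|_{L^2}\leq\|u_0\|'_{V^m}$ gives the second inequality) only make explicit what the paper leaves implicit.
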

\begin{proof}
	First let us state a simple contraction result for our system: applying to $\eqref{eqinit}$ the scalar product with $v$ and thanks to Lemma $\ref{intpart}$ and to Formula $\ref{ortho}$ we get the following estimate
	\begin{equation*}
	\frac{d||v||_{L^2}^2}{dt}+||\nabla v||_{L^2}^2\leq 0,\quad \text{ for } v=Sol(u_0,0).
	\end{equation*}
	Thus by virtue of Poincaré's inequality and following Gronwall's lemma we get that there is a $\kappa>0$ such that
	\begin{equation*}
	||S(u_0,0)||_{L^2}\leq e^{-\kappa}||u_0||_{L^2}.
	\end{equation*}
	Then by virtue of lemma $\ref{augmnorm}$ we get that 
	\begin{equation*}
	||S(u_0,0)||'_{V^m}= e^{-\kappa}||u_0||_{L^2}+\delta C_m ||u_0||_{L^2}= (e^{-\kappa}+\delta C_m)||u_0||_{L^2}\leq \gamma ||u_0||_{L^2},\quad \gamma<1
	\end{equation*}
	for $\delta= \frac{\gamma-e^{-\kappa}}{C_m}$ where $\gamma\in (e^{-\kappa},1)$.
\end{proof}


\subsection{Non degeneracy of operator $S$}
The aim of this section is to prove 
the density of the range of the operator $D_{\eta}S(u_0,\eta)$, as an application between $E_m$ and $V^m$, for any $m\geq 2$ and any $(u_0,\eta)\in \mathcal{O}_m\times \hat{E}_{m-1}$. For simplicity's sake, in what follows we shall only establish the result in the case $m=2$. To this end we will first study operators
\begin{equation*}
S_{t_1}^{t_2}:V^2\rightarrow V^2, \quad v\mapsto v',\quad 0\leq t_1\leq t_2,
\end{equation*}
where $v'$ is the value at $t_2$ of a solution $v(t)$ of equation $d\mathcal{L}^2(u) v=0$, $u=Sol(u_0,\eta)\in U_2(1)$, with initial value $v(t_1)=v$. 

Now, for any $0\leq t_1<t_2\leq 1$, let us determine the dual operator of $S_{t_1}^{t_2}$ in $(V^2,||\cdot||_{V^0})$. Let us start with the following result
\begin{lemma}
	For all $u,v,w\in U_2(1)$, we have the following equality
	\begin{equation*}
	\langle B(u,v)+B(v,u),w\rangle =\langle v, \mathbb{B}_u(w)\rangle,
	\end{equation*}
	where
	\begin{equation*}
	\mathbb{B}_u(w):=\mathfrak{P}\left(-B(u,w)+(d_2u^*)w+ \int_{-h}^z\nabla_2\left(w\frac{\partial u}{\partial z}\right)(x,y,\xi)d\xi\right),
	\end{equation*}
	and
	\begin{equation*}
	((d_2u^*)w)^i:=\sum_{l=1}^2 w_l\partial_i u_l \text{ for } i=1,2.
	\end{equation*}
\end{lemma}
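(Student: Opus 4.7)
The plan is to compute $\langle B(u,v), w\rangle$ and $\langle B(v,u), w\rangle$ separately and recognize their sum as $\langle v, \mathbb{B}_u(w)\rangle$, by repeated integration by parts in the horizontal variables and in $z$. Since $w$ takes values in $V^2$, Lemma $\ref{projVr}$ permits us to slide the orthogonal projection across the pairing, so that $\langle B(\cdot,\cdot), w\rangle = \langle b(\cdot,\cdot), w\rangle$; similarly, since $v\in V^2$, the outer $\mathfrak{P}$ in the definition of $\mathbb{B}_u(w)$ may be dropped when paired against $v$. It therefore suffices to verify the identity without the projections.

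For $\langle b(u,v), w\rangle$, split $b(u,v) = (u\cdot\nabla_2)v - \Psi_u(z)\,\partial_z v$ with $\Psi_u(z) := \int_{-h}^z \diver_2 u\, d\xi$. Integration by parts in the horizontal variables on the first piece yields $-\int_{\mathbb{O}} \diver_2 u\,(v\cdot w)\,d\zeta - \langle v, (u\cdot\nabla_2)w\rangle$, while integration by parts in $z$ on the second piece (no boundary contributions, by $z$-periodicity) yields $+\int_{\mathbb{O}} \diver_2 u\,(v\cdot w)\,d\zeta + \langle v, \Psi_u\,\partial_z w\rangle$. The $\diver_2 u\,(v\cdot w)$ contributions cancel, and one obtains the skew-symmetry identity $\langle b(u,v), w\rangle = -\langle v, b(u,w)\rangle$. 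For $\langle b(v,u), w\rangle$, the advective piece rearranges directly by relabeling indices: $\langle (v\cdot\nabla_2) u, w\rangle = \sum_{i,l}\int v_l (\partial_l u_i) w_i\, d\zeta = \langle v, (d_2 u^*) w\rangle$. The remaining term $-\langle \Phi_v\,\partial_z u, w\rangle$, with $\Phi_v(z) := \int_{-h}^z \diver_2 v\, d\eta$, is reorganized by Fubini applied to the region $\{-h\le \eta\le z\le 0\}$ and then by horizontal integration by parts, which moves $\diver_2 v$ onto $v$ and produces a contribution of the form $\pm\langle v, \int_{-h}^z \nabla_2(w\cdot \partial_\xi u)\,d\xi\rangle$ together with a boundary term coming from the $\eta = 0$ endpoint.

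The crux is that this boundary contribution is of the form $\int_{\mathbb{T}_L^2} \bar v \cdot \nabla_2 P\,dx\,dy$ with $P$ the scalar $\int_{-h}^0 (w\cdot\partial_z u)(x,y,z)\,dz$, and a final horizontal integration by parts turns it into $-\int_{\mathbb{T}_L^2} P\,\diver_2 \bar v\,dx\,dy$, which vanishes because the condition $v\in V$ enforces $\diver_2 \bar v = 0$ (see $\eqref{0D}$). Assembling the three pieces gives $\langle b(u,v)+b(v,u), w\rangle$ as the pairing of $v$ against the expression defining $\mathbb{B}_u(w)$. I expect the only real obstacle to be sign-bookkeeping through the iterated integration by parts and the Fubini swap; once the incompressibility $\diver_2 \bar v = 0$ has been invoked to kill the endpoint at $\eta = 0$, what remains is a mechanical rearrangement.
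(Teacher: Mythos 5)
Your proposal follows essentially the same route as the paper: the $\langle B(u,v),w\rangle$ term is handled via the skew-symmetry $\langle B(u,v),w\rangle=-\langle v,B(u,w)\rangle$ (which the paper obtains by polarizing Lemma~\ref{intpart} rather than redoing the integration by parts), the advective part of $B(v,u)$ is rearranged directly into $\langle v,(d_2u^*)w\rangle$, and the vertical part is treated by exchanging the order of the two $z$-integrations and killing the endpoint contribution with $\diver_2\bar{v}=0$ --- your Fubini swap over the triangle is the same computation as the paper's integration by parts in $z$ after writing $w\cdot\partial_z u$ as a $z$-derivative of its own primitive. The argument is correct; only the sign bookkeeping you explicitly defer remains to be carried out.
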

\begin{proof}
	First let us notice that thanks to Lemma $\ref{intpart}$, we have $\langle B(u,v+w),v+w\rangle=0$, so that
	\begin{equation*}
	\langle B(u,v),w\rangle= -\langle v,B(u,w)\rangle,\quad \forall u,v,w\in V^2.
	\end{equation*}
	Now let us examine $\langle B(v,u), w\rangle$. Concerning its first term, we have that
	\begin{equation*}
	\langle (v\cdot\nabla_2)u,w\rangle=\int_{\mathbb{O}}\sum_{i=1}^2v_l\Big(\sum_{l=1}^{2}w_l\partial_iu_l\Big)=\langle v,(d_2u^*)w\rangle.
	\end{equation*}
	Then we study the second term of $\langle B(v,u), w\rangle$. We have
	\begin{equation*}
	\langle \left(\int_{-h}^z \diver_2 v d\xi\right)\frac{\partial u}{\partial z},w\rangle=
	\int_{\mathbb{T}_L^2}dxdy\left[\int_{-h}^0\left(\int_{-h}^z\diver_2 vd\xi\right)\frac{\partial u}{\partial z}\cdot wdz\right].
	\end{equation*}
	Writing $\frac{\partial u}{\partial z}\cdot w=\frac{\partial}{\partial z}\left(\int_{-h}^z\frac{\partial u}{\partial z}\cdot wd\xi\right)$ we get
	\begin{equation*}
	\langle \left(\int_{-h}^z \diver_2 v d\xi\right)\frac{\partial u}{\partial z},w\rangle=-\int_{\mathbb{T}_L^2}dxdy\left[\int_{-h}^0\diver_2 v\left(\int_{-h}^z\frac{\partial u}{\partial z}\cdot wd\xi\right)dz\right],
	\end{equation*}
	because $\int_{-h}^0\diver_2 vd\xi=0$. 
	Thus
	\begin{equation*}
	\langle \left(\int_{-h}^z \diver_2 v d\xi\right)\frac{\partial u}{\partial z},w\rangle=\int_{\mathbb{O}}v(\zeta)\nabla_2\left(\int_{-h}^z\frac{\partial u}{\partial z}\cdot wd\xi\right)(\zeta)d\zeta. 
	\end{equation*}
\end{proof}

Now let us study the equation, generated by the dual operator $\mathbb{B}_u$.

\begin{lemma}
	\label{lemmadualin}
	If $u\in U_2(1)$,  and $0\leq t_1\leq t_2\leq 1$ then for $r=0,1,2$ and $w''\in V^r$, the problem
	\begin{equation}
	\label{dualin}
	\frac{dw}{dt}+\Delta w-\mathbb{B}_{u(t)}(w)=0,\quad t_1\leq t\leq t_2,\quad w(t_2)=w'',
	\end{equation}
 	has a unique solution $w\in U_r([t_1,t_2])$.
\end{lemma}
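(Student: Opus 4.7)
The strategy is to reverse time. Setting $\tilde w(s) := w(t_2 - s)$ and $\tilde u(s) := u(t_2 - s)$ for $s \in [0, t_2 - t_1]$, a direct computation converts $\eqref{dualin}$ into the forward Cauchy problem
$$\frac{d\tilde w}{ds} - \Delta \tilde w + \mathbb{B}_{\tilde u(s)}(\tilde w) = 0, \qquad \tilde w(0) = w''.$$
Since $u \in U_2(1)$, we have $\tilde u \in U_2([0,t_2-t_1])$. The resulting equation is a non-autonomous linear parabolic problem of the same nature as the linearised problem $\eqref{initlin}$, $\eqref{cilin}$ treated in Lemmas $\ref{invL}$ and $\ref{invLbis}$: its principal part is the standard heat operator $\partial_s - \Delta$, while $\mathbb{B}_{\tilde u}$ is a first-order operator in $\tilde w$ whose coefficients depend on $\tilde u$.

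Next I would mirror the a priori estimates of Lemma $\ref{invL}$ for $r = 0, 1, 2$ by taking $L^2$ scalar products of the equation successively with $\tilde w$, $-\Delta \tilde w$, and $\Delta^2 \tilde w$. The only delicate contribution in each case is $\langle \mathbb{B}_{\tilde u}(\tilde w), (-\Delta)^r \tilde w\rangle$. The duality identity from the preceding lemma is decisive: for $r = 0$ it yields
$$\langle \mathbb{B}_{\tilde u}(\tilde w), \tilde w\rangle = \langle B(\tilde u, \tilde w) + B(\tilde w, \tilde u), \tilde w\rangle = \langle B(\tilde w, \tilde u), \tilde w\rangle$$
by Lemma $\ref{intpart}$, which is precisely the critical term already handled in the proof of Lemma $\ref{invL}$ via Corollary $\ref{estbis}$ and Young's inequality. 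For $r = 1, 2$ one integrates by parts analogously and is reduced to bounding $\mathbb{B}_{\tilde u}(\tilde w)$ in Sobolev norms with constants depending only on $||\tilde u||_{U_2}=||u||_{U_2([t_1,t_2])}$.

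Given these a priori estimates, the standard Galerkin approximation scheme invoked at the end of the proof of Lemma $\ref{invL}$ produces a solution $\tilde w \in U_r([0, t_2-t_1])$, and uniqueness follows immediately from the $r = 0$ estimate applied to the difference of two candidate solutions (by linearity). Reverting to the original time variable $t = t_2 - s$ then gives the desired $w \in U_r([t_1,t_2])$.

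The main obstacle — indeed the only non-mechanical step — is verifying the analogues of Corollary $\ref{estbis}$ and Lemma $\ref{estbhaut}$ for $\mathbb{B}_u$, i.e.\ estimates of the form $||\mathbb{B}_u(w)||_{V^k} \leq C\, ||u||_{V^3}\, ||w||_{V^{k+1}}$ for $k = 0, 1$ and the corresponding $V^2$ version. The contribution $-\mathfrak{P}B(u,w)$ is covered directly by Corollary $\ref{estbis}$; the terms $\mathfrak{P}((d_2 u^*)w)$ and $\mathfrak{P}\!\int_{-h}^z \nabla_2(w\,\partial_z u)\,d\xi$ are controlled by repeating the Hölder--Sobolev chain of Lemma $\ref{estb}$, since each contains exactly one derivative of $u$ and (after distributing the outer $\nabla_2$ inside the integral term) one of $w$, matching the derivative count of $b(u,w)$. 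The projection $\mathfrak{P}$ costs nothing thanks to Lemma $\ref{projVr}$.
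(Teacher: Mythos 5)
Your proof is correct and follows essentially the same route as the paper: reverse time to turn $\eqref{dualin}$ into a forward parabolic problem of the same type as $\eqref{initlin}$, then repeat the a priori estimates and Galerkin argument of Lemma $\ref{invL}$, with uniqueness from linearity. You in fact supply more detail than the paper (which simply asserts the analysis is ``similar''), in particular the verification that $\mathbb{B}_u$ satisfies the analogues of Corollary $\ref{estbis}$ and Lemma $\ref{estbhaut}$, which is the one point the paper leaves implicit.
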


\begin{proof}
	In order for $w$ to satisfy $\eqref{dualin}$ it is necessary and sufficient that $\Bar{w}:t\mapsto w(t_2+t_1-t)$ verify equation
	\begin{equation*}
	\frac{d\Bar{w}}{dt}-\Delta \bar{w}+\mathbb{B}_{\Bar{u}}(\bar{w})=0
	\end{equation*}
	with $\Bar{w}(t_1)=w''$ (and $\Bar{u}(t)=u(t_2+t_1-t)$).
	The analysis of this equation being similar to the one of $\eqref{initlin}$, we straightforwardly get that $w$ is in an unique way determined by $w''$ and that $w\in U_r([t_1,t_2])$.
\end{proof}

%
%
%

We then set
\begin{equation*}
\Bar{S}_{t_2}^{t_1}: V^2\rightarrow V^2,\quad w''\mapsto w',\quad 0\leq t_1\leq t_2\leq 1,
\end{equation*}
where $w'=w(t_1)$ and $w$ is a solution of $\eqref{dualin}$.
Now let us recall that $S_{t_1}^{t_2}v=v(t_2)$ where $v(t)$, defined for $t_1\leq t\leq t_2$ solves equations $d\mathcal{L}^2(u)v=0$ and $v(t_1)=v_0$. For this $v(t)$ and the solution $w(t)$ of $\eqref{dualin}$, used to define the operator $\bar{S}_{t_2}^{t_1}$, we have:
\begin{equation*}
\langle v,	\frac{dw}{dt}+\Delta w-\mathbb{B}_u(w)\rangle=0.
\end{equation*}
Integrating by parts, we get
\begin{equation*}
	\langle \Delta v-B(u,v)-B(v,u),w\rangle+\langle v,\frac{dw}{dt}\rangle=0,
\end{equation*}
which in turn becomes
\begin{equation*}
\langle\frac{dv}{dt},w\rangle+\langle v,\frac{dw}{dt}\rangle=0.
\end{equation*}
Whence,
\begin{equation*}
\frac{d}{dt}\langle v,w\rangle=0.
\end{equation*}
We then get that for all $0\leq t_1<t_2\leq 1$, 
\begin{equation}
\label{dual}
\langle S_{t_1}^{t_2} v',w''\rangle=\langle v', \Bar{S}_{t_2}^{t_1}w''\rangle,\quad\forall v',w''\in V^2,
\end{equation}
i.e, $\Bar{S}_{t_2}^{t_1}$ is the dual operator to $S_{t_1}^{t_2}$ in $(V^2,||\cdot||_{V^0})$.

\begin{proposition}
	\label{nondegen}
	For $m\geq 2$, the range of $D_{\eta}S(u_0,\eta):E_m\rightarrow V^m$ is dense in $V^m$.
\end{proposition}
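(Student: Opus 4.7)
The plan is to prove density by duality, leveraging the adjoint operator $\bar{S}_1^t$ constructed just above.

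First, differentiating $S(u_0,\eta)=Sol(u_0,\eta)(1)$ in $\eta$ shows that $v(t):=(D_\eta Sol(u_0,\eta)\xi)(t)$ solves the inhomogeneous linearized equation $d\mathcal{L}^2(u)v=\xi$, $v(0)=0$, where $u:=Sol(u_0,\eta)$. By Duhamel's formula in terms of the homogeneous linearized semigroup $S_t^1$, this yields the representation
\begin{equation*}
D_\eta S(u_0,\eta)\xi=\int_0^1 S_t^1\xi(t)\,dt.
\end{equation*}

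Next, I would reduce density in $V^m$ to the injectivity of the adjoint via Hahn-Banach: it suffices to show that any $w$ in an appropriate test subspace satisfying $\langle D_\eta S(u_0,\eta)\xi,w\rangle_{L^2}=0$ for every $\xi\in E_m$ must vanish. Invoking the adjoint identity $\eqref{dual}$,
\begin{equation*}
\int_0^1\langle\xi(t),\bar{S}_1^t w\rangle_{L^2}\,dt=0\qquad\forall\xi\in E_m.
\end{equation*}
Localizing in time by choosing $\xi(t)=\chi_{[a,b]}(t)\varphi$ for arbitrary $\varphi\in V^m$ and exploiting the density of $V^m$ in $V$ forces $\bar{S}_1^t w=0$ in $V$ for almost every $t\in[0,1]$. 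By Lemma $\ref{lemmadualin}$ (applied with $r=0$), $t\mapsto\bar{S}_1^t w$ is continuous from $[0,1]$ into $V$, and by construction $\bar{S}_1^1 w=w$; evaluating the a.e.\ vanishing along a sequence of good times converging to $1$ thus yields $w=0$.

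The hardest part is bridging the regularity gap between the ambient space $V^m$, in which density is sought, and the $L^2$-pairing in which $\eqref{dual}$ is naturally phrased. To capture all of $V^m$ and not merely its $L^2$-closure, one can either restrict the test functionals to a dense subspace of the form $w=(-\Delta)^m\tilde w$ with $\tilde w\in V^m$ dense, and extend $\bar{S}$ to the corresponding distributional terminal data via the backward parabolic smoothing underlying Lemma $\ref{lemmadualin}$; or, alternatively, exploit that (by a refinement along the lines of Proposition $\ref{Reg}$) the range is uniformly bounded in $V^{m+1}$, which combined with $L^2$-density gives $V^m$-density by interpolation.
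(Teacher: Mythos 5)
Your Duhamel representation and the overall duality strategy coincide with the paper's, and your endgame (localizing $\xi$ in time to force $\bar{S}_1^t w\equiv 0$, then using continuity of $t\mapsto\bar{S}_1^tw$ and $\bar{S}_1^1w=w$) is a clean way to kill the kernel. The genuine gap is exactly where you flag it, and it is not closed by either of your proposed repairs. Pairing in $L^2$ only shows the range is dense in $V=V^0$; density in $V^m$ requires testing against $\langle\cdot,w\rangle_{V^m}=\langle\cdot,(-\Delta)^m w\rangle_{L^2}$ with $w\in V^m$, so the backward flow must be run from the terminal datum $(-\Delta)^m w$, which lies only in $V^{-m}$ and is not covered by Lemma \ref{lemmadualin} (stated for terminal data in $V^r$, $r=0,1,2$). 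Your option (b) is false: a linear subspace of $V^{m+1}$ can be dense in $L^2$ yet have a proper closed $V^m$-closure (take the kernel, intersected with $V^{m+1}$, of a functional bounded on $V^m$ but unbounded on $L^2$; it is $L^2$-dense but lies in a closed hyperplane of $V^m$). There is no interpolation principle upgrading density in a weaker norm to density in a stronger one.

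Your option (a) is essentially the paper's route, but the step you defer to ``backward parabolic smoothing'' is the actual content of the proof. For $m=2$ the paper defines $\mathcal{S}^*(w)(t)=\Delta^{-2}\bar{S}_1^t\Delta^2 w$ only for $w\in V^4$, approximates a general $w\in\ker\mathcal{S}^*\subset V^2$ by $w_n\in V^4$ with $\|w_n\|_{V^2}\le 2\|w\|_{V^2}$, and proves the uniform bound $\|\Delta^{-1}\bar{S}_1^t\Delta^2w_n\|_{U_0(1)}\le C(\|u\|_{U_2},\|w_n\|_{V^2})$, i.e.\ a bound depending on the $V^2$-norm of $w_n$ rather than its $V^4$-norm. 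This rests on the specific cancellation estimate $|\langle\Delta^{-1}\mathbb{B}_u(\Delta\xi),\xi\rangle|\le C\|u\|_{V^3}\|\nabla\xi\|_{L^2}\|\xi\|_{L^2}$, obtained by moving $\Delta^{-1}$ across the pairing and invoking Lemma \ref{estbhaut}; only then can one extract a weak limit in $U_0(1)$, identify it with $0$ because $\mathcal{S}^*w=0$, and conclude $\Delta w_n(=\xi_1^n)\rightharpoonup 0$ while also $\Delta w_n\to\Delta w$, whence $w=0$. Without establishing this uniform estimate (or an equivalent extension of the adjoint to distributional terminal data), your argument proves density of the range in $V$ only, not in $V^m$.
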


\begin{proof}
	We will prove the result only for the case $m=2$. To do this, it is enough to check that the orthogonal complement 
	 to the range of $D_{\eta}S(u_0,\eta):E_2(1)\rightarrow V^2$ in $V^2$ is $\{0\}$. 
	To this end, we show for any $w\in V^2$ that
	\begin{equation*}
	\langle D_{\eta}S(u_0,\eta)h,w\rangle_{2},\forall \text{ } h\in E_2(1)\quad \implies w=0,
	\end{equation*}
	in other words we show that $\ker(\mathcal{S}^*)=\{0\}$,
	where $\mathcal{S}^*:V^2\mapsto E_2(1)$ is the dual operator to $D_{\eta}S(u_0,\eta)$. 
	Let us denote $Sol(u_0,\eta)(t)=u(t)$, $0\leq t\leq 1$ and consider the corresponding operators $S_{t_1}^{t_2}$ and $\bar{S}_{t_2}^{t_1}$.
	As $(D_{\eta}S(u_0,\eta)h$ is the value at time $t=1$ of the solution $v$ of
	\begin{equation*}
	d\mathcal{L}(Sol(u_0,\eta))v=(0,h),
	\end{equation*}
	then, applying Duhamel's formula, we get that
	\begin{multline*}
	\langle D_{\eta}S(u_0,\eta)h,w\rangle_{2}=\int_0^1\langle S_t^1(h(t)),w\rangle_{2} dt=\int_0^1\langle S_t^1(h(t)),\Delta^2 w\rangle_{V^0} dt
	\\
	=\int_0^1\langle h(t),\Bar{S}_1^t\Delta^2 w\rangle_{V^0} dt= \int_0^1\langle h(t),\Delta^{-2}\Bar{S}_1^t\Delta^2 w\rangle_{2}dt, \quad \forall w\in V^4.
	\end{multline*}
	Thus for $w\in V^4$, we may define $\mathcal{S}^*(w)$  as:
	\begin{equation}
	\mathcal{S}^*(w)(t)= \Delta^{-2}\Bar{S}_1^t\Delta^2 w, \quad \forall w\in V^4.
	\end{equation}
	This done, let us characterize the kernel of $\mathcal{S}^*$ in $V^2$.
	We now consider any $w\in \ker(\mathcal{S}^*)\subset V^2$ and approximate it in $V^2$ by a sequence $(w_n)\in (V^4)^{\mathbb{N}\cup\{0\}}$, such that $||w_n||_{V^2}\leq 2||w||_{V^2}$ for all $n\geq 0$.
	Next, for each $n$ let us build a curve $[0,1]\ni t\rightarrow \xi_t^n$ defined by the relation,
	\begin{equation*}
		\Bar{S}_1^t\Delta^2 w_n=\Delta \xi_t^n,
	\end{equation*}
	then $\xi^n_{\cdot}=\Delta S^*(w_n)$. Since $\Delta^2 w_n \in V^0$, then by virtue of Lemma $\ref{lemmadualin}$ with $r=0$, we have that $\xi_t^n \in U_2(1)$. Let us now prove that the set $(\xi_t^n)_{n\in\mathbb{N}\cup\{0\}}$ is bounded in $U_0(1)$.
	The curve $\eta_t^n:=\Bar{S}_1^t\Delta^2 w_n$ satisfies
	\begin{equation*}
	\frac{d\eta_t^n}{dt}+\Delta \eta_t^n-\mathbb{B}_u(\eta_t^n)=0, \quad 0\leq t\leq 1,\quad \eta_1^n=\Delta^2 w_n\in V^0.
	\end{equation*}
	Thus $\xi_t^n=\Delta^{-1}\eta^n_t$ abides by the relation
	\begin{equation*}
	\frac{\partial \xi_t^n}{\partial t}+\Delta \xi_t^n-\Delta^{-1}\big(\mathbb{B}_u(\Delta\xi_t^n)\big)=0,\quad \xi_1^n=\Delta w_n.
	\end{equation*}
	 To study this equation, we revert the time-flow, that is we apply the transformation $t\rightarrow 1-t$, keeping the name $\xi_t^n$ for the transformed variable.
	Then, applying to the obtained equation the scalar product with $\xi_t^n$ 
	, we get
	\begin{equation*}
	\frac{\partial ||\xi_t^n||_{L^2}^2}{\partial t}+||\nabla \xi_t^n||_{L^2}^2\leq C\Big|\langle\Delta^{-1}\mathbb{B}_u(\Delta\xi_t^n),\xi_t^n\rangle\Big|.
	\end{equation*}
	Since, by virtue of Lemma $\ref{estbhaut}$,
	\begin{multline*}
	|\langle \Delta^{-1}\mathbb{B}_u(\Delta\xi_t^n),\xi_t^n\rangle|=|\langle \mathbb{B}_u(\Delta\xi_t^n),\Delta^{-1}\xi_t^n\rangle|
	\\
	=|\langle B(u,\Delta^{-1}\xi_t^n)+B(\Delta^{-1}\xi_t^n,u),\Delta\xi_t^n\rangle|\leq C ||B(u,\Delta^{-1}\xi_t^n)+B(\Delta^{-1}\xi_t^n,u)||_{V^2}||\xi_t^n||_{L^2}
	\\
	\leq C||u||_{V^3}||\nabla\xi_t^n||_{L^2}||\xi_t^n||_{L^2},
	\end{multline*}
	then
	\begin{equation*}
	\frac{\partial ||\xi_t^n||_{L^2}^2}{\partial t}+||\nabla \xi_t^n||_{L^2}^2\leq C||u||_{V^3}||\nabla \xi_t^n||_{L^2}||\xi_t^n||_{L^2}.
	\end{equation*}
	Since $||u||_{E_3}\leq ||u||_{U_2}$, then from here
%
%
%
\begin{equation*}
||\xi_t^n||_{U_0}\leq C(||u||_{U_2},||w_n||_{V^2}),\quad \forall n\geq 0.
\end{equation*}
We then get that $(\xi_{\cdot}^n)_n$ is compact in the weak topology of the space $U_0(1)$. Let $(\tilde{\xi}_{\cdot}^n)_n$ be a subsequence of $(\xi_{\cdot}^n)_n$ weakly converging to  some $(\xi_{\cdot})$ in $U_0(1)$.
Since $\xi_{\cdot}^n=\Delta \mathcal{S}^* w_n$ then,
\begin{equation*}
\xi_{\cdot}\leftharpoonup\xi_{\cdot}^n=\Delta \mathcal{S}^* w_n\rightarrow \Delta \mathcal{S}^* w= 0,\quad \text{in } E_0(1).
\end{equation*}
Thus $\xi_{\cdot}=0$ i.e $\xi_{\cdot}^n\rightharpoonup 0$ in $U_0(1)$. From here 
 we have $\tilde{\xi}_{1}^n\rightharpoonup 0$ in $V^0$. But $\xi_1^n=\Delta w_n\rightarrow \Delta w$ in $V^0$. Thus $\Delta w=0$ and $w=0$, 
  whence
\begin{equation*}
\ker(\mathcal{S}^*)=\{0\}.
\end{equation*}
%
%
\end{proof}

Now that the deterministic side is settled let us introduce the associated stochastic problem. Henceforth  we will consider the $\eta_n$ in $\eqref{Systeme}$ and the associated $f$ as random variables whose exact form we shall now specify.
\section{Red noise: definition and property}\label{rednoisesec}


We consider the following Haar wavelets of argument $t\in[0,1]$:

\begin{equation*}
h_{0}(t):=\mathbbm{1}_{[0,1]}(t),
\end{equation*}
and for $j\geq 0$ and $0\leq k<2^j$
\[
h_{j,k}:=
\begin{cases}
0,& t\leq \frac{k}{2^j},
\\ 1,& \frac{k}{2^j}\leq t< \frac{k+1/2}{2^j},
\\ -1,& \frac{k+1/2}{2^j}< t\leq \frac{k+1}{2^j},
\\ 0,& t\geq \frac{k+1}{2^j}.
\end{cases}
\]
Classically the $\{(2^{j/2}h_{(j,k)}),\quad j\geq 0, 0\leq k<2^j\}$ make up an orthonormal basis of $L^2([0,1],\mathbb{R})$; for the proof one may see \cite{BOK9}. Consequently
\begin{lemma}
	\label{bases}
	
	The set $(2^{j/2}h_{(j,k)}\lambda_i^{-m/2}e_i)$, $j\geq 0$, $0\leq k<2^j$, $i\geq 1$ is an orthonormal basis of  $L^2([0,1],V^m)$.
\end{lemma}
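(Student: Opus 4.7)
The plan is to reduce the claim to the classical tensor-product principle for Hilbert spaces: the product of an orthonormal basis of $L^2([0,1],\mathbb{R})$ with an orthonormal basis of a separable Hilbert space $H$ yields an orthonormal basis of $L^2([0,1], H)$. Applied with $H=V^m$, this would give the lemma at once, so the content reduces to exhibiting an ONB of $V^m$ of the form $(\lambda_i^{-m/2}e_i)_i$.

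For that, I would recall that by the normalization procedure leading to $(e_i)$, each $e_i$ is an eigenfunction of $-\Delta$ with positive eigenvalue $\lambda_i$ (these eigenvalues being enumerated through the explicit list $(2\pi/L)^2(m^2+n^2)+h^2k^2$ given after Lemma \ref{projVr}). Repeated integration by parts, together with self-adjointness of $-\Delta$, then yields $\langle e_i,e_j\rangle_m = \lambda_i^m\delta_{ij}$, so $(\lambda_i^{-m/2}e_i)_i$ is orthonormal in $V^m$. Completeness in $V^m$ is the spectral characterization $V^m = \{u\in V : \sum_i \lambda_i^m|\langle u, e_i\rangle_{L^2}|^2 < \infty\}$, which reads off directly from \eqref{projection} and the eigenvalue formula.

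The second step is the specialization of the tensor-product principle. Orthonormality of $(2^{j/2}h_{j,k}\lambda_i^{-m/2}e_i)$ is a one-line Fubini computation splitting the $t$-integral against Haar wavelets from the $V^m$-inner product of the $\lambda_i^{-m/2}e_i$. For completeness, take any $F\in L^2([0,1], V^m)$ orthogonal to every element of the proposed family. For each fixed $i$, the scalar function $F_i(t) := \lambda_i^{-m/2}\langle F(t), e_i\rangle_m$ lies in $L^2([0,1],\mathbb{R})$ and is orthogonal to every $2^{j/2}h_{j,k}$; the Haar completeness quoted just before the lemma then forces $F_i\equiv 0$ almost everywhere. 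Intersecting the countably many null sets, $\langle F(t), e_i\rangle_m = 0$ for every $i$ at almost every $t$, so $F(t)=0$ in $V^m$ for a.e. $t$, and the family is complete.

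No substantial obstacle is expected; the only spot requiring minor care is checking that the eigenbasis $(e_i)$ is simultaneously orthogonal in every $\langle\cdot,\cdot\rangle_j$, which is transparent from the explicit decomposition \eqref{projection}, and the permissibility of swapping sum and integral in the completeness step, which is justified by $L^2$-convergence of the scalar Fourier--Haar expansion of $F_i$ on $[0,1]$.
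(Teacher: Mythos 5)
Your argument is correct and follows exactly the route the paper intends: the paper gives no proof, simply asserting the lemma as a consequence of the Haar system being an orthonormal basis of $L^2([0,1],\mathbb{R})$, and your write-up supplies the standard missing details (that $(\lambda_i^{-m/2}e_i)$ is an orthonormal basis of $V^m$ via the eigenvalue relation $\langle e_i,e_j\rangle_m=\lambda_i^m\delta_{ij}$, plus the tensor-product completeness argument). The only point worth noting is that, as literally stated, the family should also include the terms with the constant wavelet $h_0$ (as the paper's own definition of $\tilde\eta$ does), since without it the scalar Haar family is not complete; your proof is otherwise sound.
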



Define a random process $\tilde{\eta}(t)\in V$ for $0\leq t\leq 1$ as follows:

\begin{align*}
\label{eta}
&\tilde{\eta}(t):=\sum_{i=0}^{\infty}b_i\tilde{\eta}^i(t)\lambda_i^{-m/2}e_i ,\quad 0\leq t\leq 1,
\\
&\tilde{\eta}^i(t):=\Big(\xi^i_{0}(t)h_{0}(t)+\sum_{j=0}^\infty c^i_j\sum_{k=0}^{2^j-1}\xi^i_{j,k}h_{j,k}(t)\Big).
\notag
\end{align*}
Here the real random variables $\xi^i_{0}$ and $\xi^i_{j,k}$ are independent, identically distributed (i.i.d) such that $|\xi^i_{0}|\leq1$ and  $|\xi^i_{j,k}|\leq 1$ for any $\omega$. In addition, we assume that the coefficients $b_i$ and $c^i_j$ which are all non-zero verify:
\begin{equation*}
\sum_{i=0}^\infty b_i^2\big(1+\big(\sum_{j=0}^\infty c^i_j\big)^2\big)<\infty.
\end{equation*}
Now let us consider a sequence $\eta_k=\sum_{i=0}^{\infty}b_i\eta_k^i(t)\lambda_i^{-m/2}e_i$ of independent copies of the process $\tilde{\eta}$ and set

\begin{equation}
\label{defgene}
f(t):=\sum_{k=0}^{\infty}\mathbbm{1}_{[k,k+1[}(t)\eta_k(t)=\sum_{k=0}^{\infty}\mathbbm{1}_{[k,k+1[}(t)\sum_{i=1}^\infty b_i\tilde{\eta}_k^i\lambda_i^{-m/2}e_i.
\end{equation}

%

%
Now let us prove that $f$ verifies $f^{\omega}(t)\in V^m$ and study its time and space regularities. 
\begin{lemma}
	\label{momL2bruit}
	Random variable $\tilde{\eta}$  is bounded in $L^{\infty}([0,1],V^m)$, for all $\omega$:
	\begin{equation*}
	\exists C^*>0, \quad  ||\tilde{\eta}(t)||_{V^m}^2<C^*,\quad\forall t\in[0,1],\quad \forall \omega.
	\end{equation*}
\end{lemma}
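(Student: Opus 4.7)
The plan is a direct pointwise-in-$(t,\omega)$ computation. First I would observe that the family $\{\lambda_i^{-m/2}e_i\}_{i\geq 0}$ is an orthonormal basis of $V^m$ for the homogeneous scalar product $\langle\cdot,\cdot\rangle_m$: indeed the $e_i$ form an $L^2$-orthonormal family of eigenvectors of $-\Delta$ with eigenvalues $\lambda_i>0$, so $\|\nabla^m e_i\|_{L^2}^2=\lambda_i^m$, whence $\|\lambda_i^{-m/2}e_i\|_{V^m}=1$. Applying Parseval's identity to the series defining $\tilde\eta(t)$ then yields, for every fixed $t\in[0,1]$ and every $\omega$,
\[
\|\tilde\eta(t)\|_{V^m}^2 \;=\; \sum_{i=0}^\infty b_i^2\,|\tilde\eta^i(t)|^2.
\]

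Next I would estimate each $|\tilde\eta^i(t)|$ uniformly in $(t,\omega)$ by exploiting the support structure of the Haar basis: for any $t\in[0,1]$, $h_0(t)\in\{0,1\}$, and for each $j\geq 0$ there is exactly one index $k=k(j,t)\in\{0,\dots,2^j-1\}$ with $h_{j,k}(t)\neq 0$, for which moreover $h_{j,k(j,t)}(t)\in\{-1,+1\}$. Combined with the essentially uniform bounds $|\xi_0^i|\leq 1$ and $|\xi_{j,k}^i|\leq 1$, this yields the deterministic estimate
\[
|\tilde\eta^i(t)| \;\leq\; 1+\sum_{j=0}^\infty|c_j^i|.
\]

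Putting the two steps together one obtains
\[
\|\tilde\eta(t)\|_{V^m}^2 \;\leq\; \sum_{i=0}^\infty b_i^2\Bigl(1+\sum_{j=0}^\infty|c_j^i|\Bigr)^{2} \;\leq\; 2\sum_{i=0}^\infty b_i^2\Bigl(1+\Bigl(\sum_{j=0}^\infty|c_j^i|\Bigr)^{2}\Bigr),
\]
which is finite by the summability hypothesis on $(b_i,c^i_j)$ (read with absolute values, as is implicit given the Haar bound above). The right-hand side depends on neither $t$ nor $\omega$, which is exactly the desired constant $C^*$. There is really no substantive obstacle: the whole content is the pointwise Haar estimate together with the observation that the rescaling $\lambda_i^{-m/2}$ in the definition of $\tilde\eta$ was put in precisely so that $\{\lambda_i^{-m/2}e_i\}$ be orthonormal in $V^m$, allowing Parseval to be applied cleanly.
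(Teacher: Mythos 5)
Your proof is correct and follows essentially the same route as the paper's: orthonormality of $\{\lambda_i^{-m/2}e_i\}$ in $V^m$ plus Parseval, the pointwise bound $|\sum_k \xi^i_{j,k}h_{j,k}(t)|\le 1$ coming from the disjoint supports at each dyadic level, and the summability hypothesis on $(b_i,c^i_j)$. If anything, your version is slightly more careful, since you keep the factor $2$ in $(a+b)^2\le 2(a^2+b^2)$ and note that the hypothesis must be read with $|c^i_j|$, both of which the paper elides.
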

\begin{proof}
Let us define
\begin{equation*}
\Sigma_j^i(t):= \sum_{k=0}^{2^j-1}\xi_{j,k}^ih_{j,k}(t).
\end{equation*}
Since the supports of the functions $h_{j,k}$ for $0\leq k\leq 2^j-1$ are disjointed, and, as $|h_{j,k}|\leq 1$ for all $t\in[0,1]$ and $|\xi_{j,k}^i|\leq 1$ for all $\omega$, we have that
\begin{equation*}
|\Sigma_j^i(t)|\leq 1\quad\forall t,\quad  \forall \omega.
\end{equation*}
Therefore
\begin{multline*}
||\tilde{\eta}(t)||_{V^m}^2=\sum_{i=0}^{\infty}|b_i|^2\Big(|\Sigma_0^i(t)|+\sum_{j=0}^\infty |c^i_j||\Sigma_j^i(t)|\Big)^2
\\
\leq \sum_{i=0}^{\infty}|b_i|^2\Big(|\Sigma_0^i(t)|^2+\Big(\sum_{j=0}^\infty |c^i_j||\Sigma_j^i(t)|\Big)^2\Big)
\leq \sum_{i=0}^{\infty}|b_i|^2\big(1+\big(\sum_{j=0}^\infty c^i_j\big)^2\big)<\infty,
\end{multline*}
for all $t\in[0,1]$ and all $\omega$
\end{proof}
%
Now we shall study the properties of the stochastic system $\eqref{Systeme}$ where $f$ is given by $\eqref{defgene}$. Namely in the following section we will strengthen the notion of mixing which we first exposed in $\eqref{melsto}$, replacing it with the exponential mixing, and we present a general result on mixing in stochastic systems which we shall apply to $\eqref{Systeme}$ in the subsequent sections.

\section{An abtsract theorem on mixing}
To show that a stochastic process is exponentially mixing, we rely on Theorem 1.3 in \cite{ART7} which we discuss in the following context.
Let $H$ and $E$ be separable Hilbert spaces and $V$ a Banach space with a compact injection into $H$. Let  $(\eta_k)_{k\geq 0}$ be a sequence of independent identically distributed random variables with values in $E$. Let $\bar{S}:H\times E\rightarrow H$ be a continuous operator. Let us fix $u_0\in H$ and consider the random dynamic system in $H$: 
\begin{equation}
\label{Sys}
u_{k+1}=\bar{S}(u_k,\eta_k),\quad k\geq 0.
\end{equation}  
It defines a Markov chain in the space $H$.
Denote by $(u_k(u_0),k\geq 0)$ a trajectory of this system, equal to $u_0$ when $k=0$.
To proceed we lay out some additional notation. Namely, for a complete separable metric space, $U$ we denote by $C_b(U)$ the space of continuous bounded functions $g$ on $U$ equipped with the sup-norm $||g||_{L^\infty}$. For a function $g\in C_b(U)$, denote by $Lip(g)\leq \infty$ its Lipschitz constant and set: 
\begin{equation*}
	||g||_L:=||g||_{L^\infty}+Lip(g).
\end{equation*}
Moreover, we name $\mathcal{P}(U)$ the space of probability Borel measures on $U$. We equip it with the weak convergence of measures and with the dual Lipschitz distance
\begin{equation*}
		||\mu-\nu||_{L}^*=\sup_{g\in C_b(U), ||g||_L\leq 1} \langle g,\mu-\nu\rangle\leq 2,
\end{equation*}
where $\langle g,\mu\rangle$ stands for the integral of $g$ against $\mu$. It is known that the weak convergence of measures is equivalent to the convergence in the dual-Lipschitz distance (see \cite{BOK3} ). Finally for a random variable $\xi\in U$ we denote by $\mathcal{D}\xi$ its law $\mathcal{D}\xi\in\mathcal{P}(U)$.
We recall that a measure $\mu\in\mathcal{P}(H)$ is stationary for the system $\eqref{Sys}$ if any trajectory $(u_k(u_0))_k$ where $u_0\in H$ is a random variable such that $\mathcal{D}(u_0)=\mu$, satisfies $\mathcal{D}(u_k)=\mu,\forall k$.
In \cite{ART4,ART7}\footnote{In \cite{ART4} a more general result is proved with stronger restrictions on the system}, the following abstract theorem is proved:

\begin{theorem}
	\label{KuksinZhang}
	Under the following hypotheses
	\begin{itemize}
		\item $\textbf{(A1)}\quad\textbf{Regularity: }$ $\bar{S}:H\times E\rightarrow V$ is twice continuously differentiable and its derivatives up to the second order are bounded on bounded sets.
		\item $\textbf{(H1)}\quad\textbf{Decomposability and non-degeneracy}$ 
		there exists an orthonormal basis $(\sigma_j)_{j\geq 1}$ of $E$, such that the random variables $\eta_k$ can be  written in the following way
		\begin{equation*}
		\eta_k:=\sum_{j=1}^\infty b_j\xi_{j,k}\sigma_j,\quad b_j\neq 0,\quad \sum b_j^2< \infty.
		\end{equation*}
		Here the $\xi_{j,k}$ are random i.i.d variables almost surely smaller than $1$ with laws of the form $\rho(r)dr$, where $\rho$ is a Lipschitz function with a non-zero value at zero.
		We name $l:=\mathcal{D}(\eta_k)$ and denote by $\mathcal{K}$ the support of $l$.
		\item $\textbf{(H2)}\quad\textbf{Dissipativity:}$ for any $\tau>0$, any $u\in H$ and any $\eta\in \mathcal{K}$ we have
		\begin{equation*}
		a) ||\bar{S}(u,\eta)||_{H}\leq \gamma||u||_H+\beta,\quad b) ||\bar{S}(u,0)||_{H}\leq \gamma||u||_H\quad \gamma<1,\beta\geq 0,
		\end{equation*}
		\item $\textbf{(H3)}\quad\textbf{Non-degeneracy: }$ for any $u\in H$ and $\eta_0\in \mathcal{K}$ the application $D_{\eta}\bar{S}(u,\eta_0):E\rightarrow V$ has a dense range in $H$.
	\end{itemize}
	Then the system $\eqref{Sys}$ is exponentially mixing: there is a unique stationary measure $\mu$ which is supported by $B_H(R^*)$ for some $R^*>0$, 
	 such that for any $R>0$, for all $u_0\in B_H(R)$, we have
	\begin{equation}
	\label{contracmes}
	||\mathcal{D}(u_k(u_0))-\mu||_{L}^*\leq C\kappa^k,
	\end{equation}
	with constants $0<\kappa(R)<1$, $C=C(R)>0$.
\end{theorem}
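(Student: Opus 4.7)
The plan is to prove this abstract mixing result via a coupling argument in the Kuksin--Shirikyan tradition, combining the three structural hypotheses (A1), (H1)--(H3) into a geometric contraction of trajectories in expectation. First I would establish existence of a stationary measure by Bogoliubov--Krylov: iterating (H2a) yields $\mathbb{E}\|u_k(u_0)\|_H\leq \gamma^k\|u_0\|_H+\beta/(1-\gamma)$, so the Cesaro-averaged laws $k^{-1}\sum_{j<k}\mathcal{D}(u_j(u_0))$ are tight, and (A1) combined with the compact embedding $V\hookrightarrow H$ ensures that the transition kernel is Feller and sends bounded sets in $H$ into relatively compact sets of $H$. A weak limit point then yields a stationary measure $\mu$ supported in $\bar B_H(R^*)$ with $R^*:=\beta/(1-\gamma)$.

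The heart of the proof is the construction, for any two initial points $u_0,u'_0\in B_H(R)$, of a coupling $(u_k,u'_k)$ whose $H$-distance decays geometrically. Thanks to (H1) the noise decomposes as $\eta_k=\sum_j b_j\xi_{j,k}\sigma_j$ with i.i.d.\ coordinates having Lipschitz density $\rho$ with $\rho(0)\neq 0$, which makes the maximal coupling on a finite-dimensional truncation quantitative: two laws $\mathcal{D}(\eta)$ and $\mathcal{D}(\eta+\zeta)$ can be coupled to agree with probability at least $1-C\|\zeta\|_E$ for small shifts. The non-degeneracy (H3), paired with the $C^2$-smoothness (A1), supplies through an implicit-function argument built around the linearization $D_\eta\bar S(u,\eta_0)$ a shift $\zeta=\zeta(u_k,u'_k)\in E$ realising $\bar S(u_k,\eta+\zeta)=\bar S(u'_k,\eta)$ up to a controllable error in $H$; the maximal coupling of $\mathcal{D}(\eta)$ and $\mathcal{D}(\eta+\zeta)$ then produces iterates that coincide with a probability bounded below by $p(\|u_k-u'_k\|_H)>0$.

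Once the trajectories are close, a Foias--Prodi type estimate extracted from (A1), the regularization into $V$ and the contracting part of (H2b) shows that under common noise $\|u_{k+1}-u'_{k+1}\|_H\leq\kappa\|u_k-u'_k\|_H$ for some $\kappa<1$ along the coupled trajectories. Iterating the coupling step together with this local contraction yields $\mathbb{E}\|u_k-u'_k\|_H\leq C\kappa^k$, which translates into the dual-Lipschitz bound
\begin{equation*}
\|\mathcal{D}(u_k(u_0))-\mathcal{D}(u_k(u'_0))\|_L^*\leq \mathbb{E}\min\bigl(2,\|u_k-u'_k\|_H\bigr)\leq C'\kappa^k,
\end{equation*}
and taking $\mathcal{D}(u'_0)=\mu$ delivers $\eqref{contracmes}$ together with uniqueness of $\mu$. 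The main obstacle is the quantitative coupling in the second step: it requires exploiting (A1), (H1), and (H3) jointly to produce a shift $\zeta$ with coupling-failure probability controlled by a small fraction of $\|u_k-u'_k\|_H$, so that the geometric iteration actually closes. This is precisely the subtle interplay which is carried out in detail in \cite{ART4,ART7}.
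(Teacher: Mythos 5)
The first thing to note is that the paper itself does not prove Theorem~\ref{KuksinZhang}: it is quoted verbatim as an external result, with the proof attributed entirely to \cite{ART4,ART7}. So there is no internal proof to compare you against; what you have written is a sketch of the argument that those references carry out. As a sketch it identifies the right skeleton: Bogoliubov--Krylov existence via the dissipativity bound $\mathbb{E}\|u_k\|_H\leq\gamma^k\|u_0\|_H+\beta/(1-\gamma)$ together with the Feller property and the compactness supplied by $\bar S:H\times E\to V$ and $V\hookrightarrow\hookrightarrow H$; then a coupling construction exploiting the Lipschitz density $\rho$ with $\rho(0)\neq0$ from \textbf{(H1)} and the dense range of $D_\eta\bar S$ from \textbf{(H3)} to build a shift $\zeta$ whose maximal coupling cost is controlled by $\|\zeta\|_E$; then a geometric iteration closing into the dual-Lipschitz estimate $\eqref{contracmes}$. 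This is indeed the Kuksin--Nersesyan--Shirikyan strategy.

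Two caveats keep this from being a proof. First, the step you yourself flag as ``the main obstacle'' --- producing, for each pair $(u_k,u_k')$, a shift $\zeta$ for which the coupling fails with probability at most a small multiple of $\|u_k-u_k'\|_H$, uniformly over the absorbing set, and doing so with an infinite-dimensional noise via finite-dimensional truncation plus \textbf{(H3)} --- is the entire technical content of the theorem, and you defer it to the very references the paper cites. That is not a gap relative to the paper (which defers everything), but it means the proposal is an outline, not an argument. Second, the clause asserting that ``under common noise'' a Foias--Prodi estimate gives $\|u_{k+1}-u_{k+1}'\|_H\leq\kappa\|u_k-u_k'\|_H$ is not available in this abstract setting: there is no determining-modes structure here, and if identical noises already contracted trajectories then \textbf{(H1)} and \textbf{(H3)} would be superfluous. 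In \cite{ART7} the contraction of $\|u_k-u_k'\|_H$ is produced by the coupling of \emph{shifted} noises itself, while \textbf{(H2)} (in particular part \textbf{b)}, usable because $\rho(0)\neq0$ places $0$ in the support $\mathcal{K}$) serves to drive any pair of trajectories back into the bounded region where that coupling can be launched. You should either remove the common-noise contraction claim or replace it by the squeezing estimate along the coupled (non-identical) noises.
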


\section{Reformulation of the hypotheses}


We seek to apply to the system $\eqref{Systeme}$ a variation of Theorem \ref{KuksinZhang}, where the hypotheses are somewhat relaxed. To this end we analyse the demonstration of the theorem: in \cite{ART7} the authors work in two steps. First, they state the existence of an open bounded subset $\mathcal{O}$ of the phase space that is both invariant and absorbant for the process. In their case, $\mathcal{O}=B_H(R)$ for a specific $R$.
Using hypothesis $\textbf{(A1)}$, they define some $\mathfrak{K}(\mathcal{O},\mathcal{K})>0$ such that 
\begin{equation*}
	\bar{S}:\mathcal{O}\times \mathcal{K}\mapsto B_{V}(\mathfrak{K}(\mathcal{O},\mathcal{K})),
\end{equation*}
and name $X$, the completion of $\mathcal{O}\cap B_{V}(\mathfrak{K}(\mathcal{O},\mathcal{K}))$ in $H$.
Then they consider the following restrained system:
\begin{equation*}
\bar{S}:X\times\mathcal{K}\mapsto X,
\end{equation*}
%
where $\bar{S}$ extends to a $C^2$ smooth mapping
\begin{equation}
\label{exten}
\tilde{S}:H\times E\mapsto V.
\end{equation}
In this context we formulate the following reduced hypotheses on $\bar{S}$: 
\begin{itemize}
	\item $\textbf{(A1${}'$)}$: $\tilde{S}$ verifies $\textbf{(A1)}$
	\item $\textbf{(H1)}$
	\item $\textbf{(H2${}'$)}$: $\textbf{(H2) b)}$ holds
	\item $\textbf{(H3${}'$)}$: $\textbf{(H3)}$ holds for $(u,\eta_0)\in\mathcal{O}\times\mathcal{K}$.
\end{itemize}
Then, in their demonstration of Theorem 1.3 (which is the second step of the proof of mixing for the system), they prove the following:
%
\begin{theorem}{ From theorem 1.3 in \cite{ART7}. }
	\label{melnouv}
	Suppose that our sytem verifies $\textbf{(A1${}'$)}$, $\textbf{(H1${}'$)}$, $\textbf{(H2)}$ and $\textbf{(H3${}'$)}$.
	 Then the system $\eqref{Systeme}$ has a unique stationary measure $\mu\in\mathbb{P}(\mathbb{O})$ and for any $u,u'\in \mathcal{H}$, we have
	\begin{equation}
	||\mathcal{D}(u_k(u))-\mathcal{D}(u_k(u')||_L^*\leq C||u-u'||_{H}\kappa^k,
	\end{equation}
	for some $0<\kappa<1$, $C>0$ that only depend on $\mathcal{O}$.
\end{theorem}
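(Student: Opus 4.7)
The plan is to reduce Theorem \ref{melnouv} to the abstract mixing Theorem \ref{KuksinZhang} of Kuksin--Zhang, exploiting the existence of the bounded invariant absorbing set $\mathcal{O}$ together with the $C^2$-smooth extension $\tilde{S}:H\times E\to V$ introduced just before the statement. I would first observe that, since every trajectory issued from $X$ (the completion of $\mathcal{O}\cap B_V(\mathfrak{K}(\mathcal{O},\mathcal{K}))$ in $H$) stays in $X$, one may replace the global dissipativity (H2) by (H2$'$): whenever $u_k\in X$ and $\eta\in\mathcal{K}$, the inclusion $\bar{S}(u_k,\eta)\in X$ already delivers the uniform boundedness of trajectories that the constant $\beta$ in (H2 a) encodes, so the purely contractive bound (H2 b), i.e.\ (H2$'$), is enough for what follows.

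Next, I would verify that the coupling argument of \cite{ART7} carries through under the restricted non-degeneracy (H3$'$). The argument splits the noise $\eta_k$ along the basis $(\sigma_j)_{j\geq 1}$ of (H1) into a finite collection of low modes and an infinite tail. For the low modes a Kantorovich maximal coupling, made possible by the Lipschitz density $\rho$ of the law of each $\xi_{j,k}$ with $\rho(0)\neq 0$, forces agreement of the first $N$ projections of the two post-noise states. For the tail, the dissipativity (H2$'$) together with a Foia\c s--Prodi type trick absorbs the residual mismatch. Since the chain we are coupling remains in $X\subset\mathcal{O}$ at every step, (H3$'$) supplies all the density-of-range information needed to choose $N=N(\mathcal{O})$ uniformly and to carry out the auxiliary controllability step.

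The remaining steps are then exactly those of the standard two-block argument: one first shows that, starting from any pair $(u,u')\in X\times X$, both trajectories enter a small reference ball inside $X$ with a uniform positive probability (a controllability statement that follows from (H3$'$) and the non-vanishing of $\rho$ at $0$); once inside, the Kantorovich coupling glues the two chains with another uniform positive probability. Iterating these two substeps yields a geometric return time to the coalescence event, which in turn produces the exponential contraction
$$\|\mathcal{D}(u_k(u))-\mathcal{D}(u_k(u'))\|_L^*\leq C\kappa^k\|u-u'\|_H,$$
and from this uniqueness of the stationary measure $\mu$ and its support in $X$ follow in the standard way by a Cauchy sequence argument.

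The main obstacle, as I see it, is verifying that every invocation of (H2) and (H3) in the proof of \cite{ART7} occurs only at states lying in $\mathcal{O}$ and along noises in $\mathcal{K}$, so that the weakened hypotheses (H2$'$) and (H3$'$) genuinely suffice. This is essentially a matter of careful bookkeeping: the extension $\tilde{S}$ is used only as a technical device allowing us to speak of (A1) and of the derivative $D_\eta\bar{S}$ in a uniform way on all of $H\times E$, while every dynamical estimate is performed on $X$, where $\bar{S}$ and $\tilde{S}$ agree and the weakened hypotheses apply. Once this bookkeeping is done, the proof reduces to a quotation of the coupling construction of \cite{ART7}, with $\mathcal{O}$ playing the role originally played by the ball $B_H(R)$.
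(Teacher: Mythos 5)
Your proposal is correct and follows essentially the same route as the paper: Theorem \ref{melnouv} is not proved from scratch there either, but is obtained by quoting the coupling construction behind Theorem 1.3 of \cite{ART7} and observing, exactly as you do, that every invocation of the dissipativity and non-degeneracy hypotheses in that proof takes place at states in the invariant absorbing set $\mathcal{O}$ and noises in $\mathcal{K}$, so the weakened hypotheses suffice. Your additional detail on the low-mode/tail splitting and the coupling is consistent with the cited argument and does not change the approach.
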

Moreover we may relax the constraint on $\mathcal{O}$ which instead of being an open ball of $H$ can merely be a bounded open domain of it. 
Furthermore, there is no need to demand the existence of an extension $\tilde{S}$ as in $\eqref{exten}$. 
It is enough that $\tilde{S}$ be defined, twice differentiable and bounded in $C^2$-norm on $X_\epsilon\times\mathcal{K}_{\epsilon}$, where $X_\epsilon$ and $\mathcal{K}_{\epsilon}$ are the $\epsilon$-neighbourhoods of $X$ and $\mathcal{K}$ respectively in $H$ and $E$.
 
This clarified, we will verify hypotheses $\textbf{(A1${}$')}$, $\textbf{(H1${}$')}$, $\textbf{(H2)}$ and $\textbf{(H3${}$')}$ for our system.
In our case,  $\bar{S}=S$, $H=V^m$, $V=V^{m+1}$, $E=E_m(1)$ and $\mathcal{O}=\mathcal{O}_m$ for some $m\geq 2$. By virtue of Theorem $\ref{Petcu}$ and of Theorem $\ref{StabAnaS}$, this last set is indeed invariant by $S$ and absorbing for it.

\section{Verification of the hypotheses and conclusion}
\label{verifhyp}
We will verify the hypotheses $(A1${}$')$-$(H3)$ with the norm $||\cdot||_{V^m}$ replace by the equivalent norm $||\cdot||'_{V^m}$. Then $(H2${}$')$ holds by Proposition $\ref{dissip}$, and
hypothesis $(A1${}$')$ stands true by virtue of Theorem $\ref{StabAnaS}$, the validity of $(H1)$ is obvious considering the structure of our noise, as for hypothesis $(H3${}$')$ Proposition $\ref{nondegen}$ allows us to verify it here.

Thus we get

\begin{theorem}
	\label{mixing}
	Assume that the force $f$ in $\eqref{init}$ is the red noise $\eqref{defgene}$, and consider the corresponding system $\eqref{Sys}$ in a space $V^m$ with $m\geq 2$.
	Then this system has a unique stationary measure $\mu$. It is supported by the set $\mathcal{O}_m$ and for any random variable $u_0\in\mathcal{O}_m$, we have
	\begin{equation*}
	||\mathcal{D}(u_k(u_0))-\mu||_{L(V^m)}^*\leq C_m\kappa_m^k.
	\end{equation*}
	where $C_m\geq 0$ and $0<\kappa_m<1$.
\end{theorem}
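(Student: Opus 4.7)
The plan is to apply the abstract mixing result, Theorem \ref{melnouv}, to the Markov chain defined by \eqref{Systeme} with the identifications $H=V^m$, $V=V^{m+1}$, $E=E_m(1)$, $\mathcal{K}=\mathrm{supp}(\mathcal{D}\eta_0)\subset E_m(1)$, $\mathcal{O}=\mathcal{O}_m$, and $\bar{S}=S$. Note that $V^{m+1}$ embeds compactly into $V^m$ (the eigenvalues of $-\Delta$ accumulate at $+\infty$), so the compactness assumption on $V\hookrightarrow H$ is fulfilled. Throughout, I would use the equivalent norm $\|\cdot\|'_{V^m}$ introduced in Proposition \ref{dissip}, which changes nothing to the topology of $V^m$ but is essential to close the contraction argument.

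Next I would verify the four relaxed hypotheses in order. Hypothesis $(A1')$ asks for a $C^2$ mapping from $\mathcal{O}_m\times\mathcal{K}_\epsilon$ into $V^{m+1}$ with bounded derivatives on bounded sets; since $\mathcal{O}_m\subset B_{V^m}(K+1)$ by \eqref{inclusions}, this reduces, up to enlarging $R$, to the analyticity statement of Theorem \ref{StabAnaS} combined with the key smoothing assertion of Proposition \ref{Reg}, which says precisely that $S$ is analytic as a map $B_{V^m}(2R)\times\hat E_m(1)\to V^{m+1}$. Hypothesis $(H1)$ follows at once from the definition of the red noise in \eqref{defgene}: Lemma \ref{bases} identifies the family $\{2^{j/2}h_{j,k}\lambda_i^{-m/2}e_i\}$ as an orthonormal basis of $E_m(1)$, and relabeling $(i,j,k)\mapsto j$ rewrites $\eta_k$ in the canonical form $\sum_j b_j\xi_{j,k}\sigma_j$ with $\sum b_j^2<\infty$, the law of each scalar $\xi_{j,k}$ being i.i.d. and admitting (by the standing assumption on the red noise) a Lipschitz density that does not vanish at the origin. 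Hypothesis $(H2')$, contraction of $S(\cdot,0)$ in $\|\cdot\|'_{V^m}$ on $\mathcal{O}_m$, is exactly the content of Proposition \ref{dissip}. Hypothesis $(H3')$, density of the range of $D_\eta S(u,\eta_0):E_m(1)\to V^m$ for $(u,\eta_0)\in\mathcal{O}_m\times\mathcal{K}$, is Proposition \ref{nondegen}.

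Once all four hypotheses are checked, Theorem \ref{melnouv} delivers a unique stationary measure $\mu$ on $\mathcal{O}_m$ together with the exponential convergence
\begin{equation*}
\|\mathcal{D}(u_k(u_0))-\mu\|^*_{L(V^m)}\leq C_m\kappa_m^k,
\end{equation*}
with $C_m\geq 0$ and $\kappa_m\in(0,1)$ depending only on $\mathcal{O}_m$, which is precisely the conclusion. The fact that $\mu$ is supported in $\mathcal{O}_m$ is automatic: $\mathcal{O}_m$ is open, bounded and invariant under $S$ by Theorem \ref{StabAnaS}, so any limit of $\mathcal{D}(u_k(u_0))$ for $u_0\in\mathcal{O}_m$ must be supported in $\overline{\mathcal{O}_m}$.

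The one delicate point in this program is $(A1')$: the abstract theorem requires $S$ to take values in a space $V$ compactly embedded in $H$, which here forces us to produce one extra degree of spatial regularity at each step of the chain, even though the driving force $f$ lies only in $V^{m-1}$ at each instant. This gain is precisely what Proposition \ref{Reg} supplies via the Duhamel decomposition $S(u_0,\eta)=\int_0^1 D_{u_0}S(\gamma u_0,\gamma\eta)u_0\,d\gamma+\int_0^1 D_\eta S(\gamma u_0,\gamma\eta)\eta\,d\gamma$ together with Lemma \ref{fimplicites}, so the only real work left at this final stage is to recognize that the extended mapping $\tilde S$ demanded by the relaxed theorem can be built from Theorem \ref{StabAnaS} applied on a slightly enlarged ball containing $\mathcal{O}_m$; everything else is a direct invocation of results proved in the previous sections.
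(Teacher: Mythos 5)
Your proposal is correct and follows essentially the same route as the paper: identify $H=V^m$, $V=V^{m+1}$, $E=E_m(1)$, $\mathcal{O}=\mathcal{O}_m$, $\bar S=S$, verify $(A1')$ via Theorem \ref{StabAnaS} together with the smoothing of Proposition \ref{Reg}, $(H1)$ from the Haar structure of the red noise and Lemma \ref{bases}, $(H2')$ from Proposition \ref{dissip} in the norm $\|\cdot\|'_{V^m}$, and $(H3')$ from Proposition \ref{nondegen}, then invoke Theorem \ref{melnouv}. If anything, you are slightly more explicit than the paper in flagging that Proposition \ref{Reg} is the ingredient supplying the $V^{m+1}$-valued regularity needed for the compact embedding.
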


Moreover, by virtue of Theorem $\ref{Absorbe}$, the set $O_m$ is absorbing for our system. Therefore  we have the following corollary to the previous theorem:

\begin{corollary}
	\label{coromel}
	Let us still suppose that the noise $f$ be of the form $\eqref{defgene}$, with $m\geq 2$. Then, for any $R'>0$, and for any random variable $u_0\in B_{V^m}(R')$, 
	  there exists $C_m(R')>0$ and $\kappa(R')<1$ such that for any $ k\geq 0$,
	\begin{equation}
	\label{ineqmes}
	||\mathcal{D}(u_k(u_0))-\mu||_{L(V^m)}^*\leq C_m(R')\kappa_m^{k}.
	\end{equation}
\end{corollary}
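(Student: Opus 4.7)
The strategy is to reduce to Theorem \ref{mixing} by exploiting the absorbing property from Theorem \ref{Absorbe} to push an arbitrary initial condition from $B_{V^m}(R')$ into $\mathcal{O}_m$ in a deterministic number of integer time-steps, and then invoke the Markov property.

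First, I would apply Theorem \ref{Absorbe} with $M=R'$ and the uniform bound $C^*$ that controls the red noise $f$ in $L^\infty(\mathbb{R}^+,V^{m-1})$ (recall that by Lemma \ref{momL2bruit} the process $\tilde\eta$ is almost surely bounded in $L^\infty([0,1],V^m)$, and the same holds for $f$ on $\mathbb{R}^+$). This provides a deterministic time $T(R',C^*)\geq 1$ and the absorbing radius $R$ such that any solution $v$ of $\eqref{init}$ with $\|v_0\|_{V^m}\leq R'$ satisfies $\|v(t)\|_{V^m}\leq R$ for every $t\geq T(R',C^*)$, almost surely. Set $k_0:=\lceil T(R',C^*)\rceil$, which depends only on $R'$. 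Then for any random variable $u_0$ with $\|u_0\|_{V^m}\leq R'$, the iterate $u_{k_0}(u_0)$ belongs almost surely to $\overline{B}_{V^m}(R)\subset B_{V^m}(2R)\subset\mathcal{O}_m$ by the inclusion $\eqref{inclusions}$.

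Next, I use the fact that $(u_k)_{k\geq 0}$ is a Markov chain (the noises $\eta_k$ are i.i.d.), so by the Markov property, for $k\geq k_0$,
\begin{equation*}
\mathcal{D}(u_k(u_0))=\int_{\mathcal{O}_m}\mathcal{D}(u_{k-k_0}(w))\,\mathcal{D}(u_{k_0}(u_0))(dw).
\end{equation*}
Applying Theorem \ref{mixing} to every $w\in\mathcal{O}_m$ (so that its conclusion holds with uniform constants $C_m$ and $\kappa_m$ independent of $w$), and integrating in $w$ against $\mathcal{D}(u_{k_0}(u_0))$, I obtain by convexity of the dual-Lipschitz distance
\begin{equation*}
\|\mathcal{D}(u_k(u_0))-\mu\|_{L(V^m)}^*\leq C_m\kappa_m^{\,k-k_0},\quad k\geq k_0.
\end{equation*}
Rewriting this as $(C_m\kappa_m^{-k_0})\kappa_m^k$ gives the announced bound for $k\geq k_0$ with constant $C_m(R'):=C_m\kappa_m^{-k_0}$, which only depends on $R'$ (through $k_0$) and $m$.

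Finally, for $0\leq k<k_0$ I use the trivial bound $\|\mathcal{D}(u_k(u_0))-\mu\|_{L(V^m)}^*\leq 2\leq 2\kappa_m^{-k_0}\cdot\kappa_m^k$, and enlarge $C_m(R')$ if necessary so that $\eqref{ineqmes}$ holds for all $k\geq 0$. There is no real obstacle here: the argument is essentially mechanical once one observes that the absorbing time from Theorem \ref{Absorbe} is deterministic (since the red noise is pathwise bounded in $V^{m-1}$), so the single finite ``burn-in'' of length $k_0$ is uniform in $\omega$ and the Markov property transports the exponential mixing back to arbitrary bounded initial data.
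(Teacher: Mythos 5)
Your argument is correct and follows essentially the same route as the paper: absorb into $\mathcal{O}_m$ after a deterministic number $k_0(R')$ of steps via Theorem \ref{Absorbe}, apply Theorem \ref{mixing} from that point on with constant $C_m\kappa_m^{-k_0}$, and cover $k<k_0$ by the trivial bound $\|\cdot\|_L^*\leq 2$. Your explicit use of the Markov property to disintegrate over $\mathcal{D}(u_{k_0}(u_0))$ only spells out what the paper leaves implicit.
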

Since the convergence in the dual-Lipschitz distance is equivalent to the weak convergence of measures, for any random variable $u_0$ as in the assumptions of this corollary, we have
\begin{equation*}
\mathbb{E}[g(u_k(u_0))]\xrightarrow[k\rightarrow \infty]{}\int_{V^m} g(u) d\mu(u),\quad \forall g\in C_b(V^m),
\end{equation*}
cf $\eqref{melsto}$.
\begin{proof}[Proof of Corollary $\ref{coromel}$.]
	We set $k_0(R)$ as the smallest integer such that $k_0(R)\geq T_0=T(R,C^*)$ with $T$ defined in Theorem $\ref{Absorbe}$. When $k\geq k_0(R)$, the system is in $\mathcal{O}_m$, thus we may apply Theorem $\ref{mixing}$ hence the result for $k\geq k_0$ with $C_m(R'):=C_m\kappa^{-k_0}$. Moreover, as $||\mu_k(u_0)-\mu||_{L(V^m)}^*\leq 2$, then increasing $C_m(R')$ if needed we achieve that $\eqref{ineqmes}$ also holds for any $k\leq k_0(R)$. 
\end{proof}

\section{Appendix}

\begin{theorem}[Uniform local inverse theorem]
	\label{invlocalunif}
	Let $E$ and $F$ be two complex Banach spaces and $U$ be an open bounded subset of $E$. Let $r>0$ and
	\begin{equation*}
	f:U\mapsto F,
	\end{equation*}
	is an analytical mapping which is injective.
	Let $Z\subset F$  be such that $Z\subset f(U)$ and \textbf{a)}: $G_r:= f^{-1}(Z)+B_E(r)\subset U$.
	
	Moreover, we assume that :
	\begin{itemize}
		\item \textbf{b)}: $||df(x)||_{E\mapsto F}\leq K_1, \quad \forall x \in G_r$,
		\item \textbf{c)}: $||d^2f(x)||_{E\times E\mapsto F}\leq K_2, \quad \forall x\in G_r$ and
		\item \textbf{d)}: $\forall x\in f^{-1}(Z)$, $||(df(x))^{-1}||_{F\mapsto E}\leq K_3$.
	\end{itemize}
		
	Then $\exists\rho=\rho(K_1,K_2,K_3,r)$ and $\exists L=L(K_1,K_2,K_3,r)$ such that on the set $Z_{\rho}=Z+B_F(\rho)$ the inverse mapping $f^{-1}:Z_{\rho}\mapsto U$ is well defined and analytic. Moreover $||df^{-1}(z)||_{F\mapsto E}\leq L$, $\forall z\in Z_{\rho}$.
	The assertion remains true if $E$ and $F$ are real Hilbert spaces, the map $f$ is real-analytic and $G_r$ is an $r$-neighbourhood of $f^{-1}(Z)$ in the complexification of the Hilbert space $E$.
\end{theorem}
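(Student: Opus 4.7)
The plan is to prove this uniform inverse function theorem by running a Newton/Banach fixed-point argument \emph{simultaneously} at every base point $x_0 \in f^{-1}(Z)$, with step radii $r'$ and $\rho$ chosen from the outset in terms of $K_1, K_2, K_3, r$ alone. This yields local inverses on each ball $B_F(f(x_0),\rho)$, and global injectivity of $f$ glues them into the sought inverse on $Z_\rho$.

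For the core estimate I would fix $x_0 \in f^{-1}(Z)$, set $z_0 := f(x_0)$, and introduce for each target $z$ the map
\[
T_z(x) := x - df(x_0)^{-1}\bigl(f(x) - z\bigr),
\]
whose fixed points are exactly the preimages of $z$ under $f$. Assumption \textbf{(c)} combined with Taylor's formula gives, for $x,y$ in any ball $\bar{B}_E(x_0,r') \subset G_r$,
\[
\|T_z(x) - T_z(y)\|_E \le K_3 K_2 \, \max\bigl(\|x - x_0\|,\|y - x_0\|\bigr)\,\|x - y\|_E,
\]
so choosing $r' := \min\{r,\,1/(4K_2K_3)\}$ makes $T_z$ a $\tfrac12$-contraction on $\bar{B}_E(x_0,r')$. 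Picking $\rho := r'/(2K_3)$ ensures $\|T_z(x_0) - x_0\|_E \le K_3\rho \le r'/2$ for every $z \in B_F(z_0,\rho)$, so the closed ball is stable under $T_z$; Banach's theorem then produces a unique fixed point $x(z) \in \bar{B}_E(x_0,r')$.

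Doing this at every $x_0 \in f^{-1}(Z)$ gives a family of local inverses; global injectivity of $f$ on $U$ forces them to coincide on overlaps, hence they assemble into a single map $f^{-1}:Z_\rho \to U$. For the derivative bound I would reuse \textbf{(c)}: given $z \in Z_\rho$, pick $x_0 \in f^{-1}(Z)$ with $\|z - f(x_0)\| < \rho$, set $x := f^{-1}(z) \in \bar{B}_E(x_0,r')$, and note $\|df(x) - df(x_0)\| \le K_2 r' \le 1/(4K_3)$, so a Neumann-series argument upgrades \textbf{(d)} to $\|df(x)^{-1}\| \le 2K_3 =: L$ throughout $f^{-1}(Z_\rho)$. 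Analyticity of $f^{-1}$ then follows because the iterates $x_{n+1} = T_z(x_n)$ are analytic in $z$ (being compositions of the analytic $f$ with affine maps in $z$) and converge uniformly on $Z_\rho$; equivalently, one could invoke the holomorphic implicit function theorem applied to $(x,z) \mapsto f(x) - z$.

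For the real-analytic version the hypothesis already allows us to work in the complexification $E^{\mathbb{C}}$, on which $f$ extends holomorphically to $G_r$, so the complex construction above delivers a holomorphic inverse on a complex $\rho$-neighbourhood of $Z$ whose restriction to the real locus is real-analytic. The main technical point is the bookkeeping: one must verify that the contraction threshold from \textbf{(c)} and the self-map threshold that uses \textbf{(d)} can be met simultaneously with a \emph{single} pair $(r',\rho)$ independent of the base point $x_0$. Since \textbf{(c)} gives a uniform Lipschitz bound on $df$ throughout $G_r$ and \textbf{(d)} gives a uniform invertibility constant on $f^{-1}(Z)$, this uniformity is automatic; the bound $K_1$ is not used in the existence argument itself and merely serves to keep $f(G_r)$ bounded, which is implicit in choosing how $Z_\rho$ sits inside $f(U)$.
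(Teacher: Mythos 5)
Your argument is correct, and it is in fact substantially more complete than what the paper records. The paper's own proof is deliberately partial: it takes the existence of the local inverses on each ball $B_F(z,\rho)$ for granted and only writes out the gluing step, namely that injectivity of $f$ forces the locally constructed inverses to agree on overlapping balls, so that they patch into a single map on $Z_\rho$ --- which is word for word the gluing argument you give. Everything else in your proposal (the uniform Newton--Picard iteration $T_z(x)=x-df(x_0)^{-1}(f(x)-z)$, the choice of $r'$ and $\rho$ from $K_2,K_3,r$ alone, the Neumann-series upgrade of \textbf{d)} to a bound $L=2K_3$ on all of $f^{-1}(Z_\rho)$, and analyticity of the inverse as a locally uniform limit of holomorphic iterates) is exactly the content the paper omits, and your observation that the uniformity of $\rho$ in the base point is automatic from the uniformity of \textbf{c)} and \textbf{d)} is the real point of the theorem. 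Two minor bookkeeping remarks: take $r'$ strictly less than $r$ (e.g.\ $r'=\min\{r/2,\,1/(4K_2K_3)\}$) so that the \emph{closed} ball $\bar{B}_E(x_0,r')$ genuinely sits inside the open set $G_r$ where \textbf{c)} is assumed; and note that global injectivity of $f$ on $U$ is also what guarantees that the fixed point you produce is \emph{the} preimage of $z$, not merely a preimage, so that $f^{-1}$ on $Z_\rho$ is single-valued without further argument. You are also right that $K_1$ plays no essential role in the construction.
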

 \begin{proof}
 	Here we shall only prove that $f^{-1}$ may be built globally on $Z_{\rho}$ when its components on the $B_F(z,\rho)$ for all $z\in Z$ are each well defined. To do so, we only need to remark that $f$ being injective, if two balls $B_F(z,\rho)$ have a non-void intersection, then the locally built $f^{-1}$ must coincide on said intersection, by virtue of $f$ being injective. Thus $f$ is unequivocally defined on the reunion $Z_\rho$ of all $B_F(z,\rho)$ with $z\in Z$.
 \end{proof}

\printbibliography
\end{document}